\documentclass[a4paper,11pt]{article}
\usepackage{amsmath,amsthm,amssymb}
\newtheorem*{theorem*}{Theorem}
\newtheorem*{corollary*}{Corollary}
\newtheorem*{proposition*}{Proposition}
\newtheorem*{remark*}{Remark}
\newtheorem*{lemma*}{Lemma}
\newtheorem*{example*}{Example}

\newtheorem{theorem}{Theorem}[section]

\newtheorem{proposition}[theorem]{Proposition}
\newtheorem{corollary}[theorem]{Corollary}
\newtheorem{lemma}[theorem]{Lemma}
\newtheorem{example}[theorem]{Example}
\newtheorem{remark}[theorem]{Remark}

\makeatletter
\@addtoreset{equation}{section}
\makeatother

\newcommand{\be}{\mathbf{e}}
\newcommand{\bk}{\mathbf{k}}
\newcommand{\bl}{\mathbf{l}}
\newcommand{\bm}{\mathbf{m}}
\newcommand{\bn}{\mathbf{n}}

\newcommand{\BZ}{\mathbb{Z}}
\newcommand{\BR}{\mathbb{R}}
\newcommand{\BC}{\mathbb{C}}

\newcommand{\BO}{\mathbb{O}}

\newcommand{\cF}{\mathcal{F}}
\newcommand{\cH}{\mathcal{H}}

\newcommand{\cO}{\mathcal{O}}
\newcommand{\cP}{\mathcal{P}}

\newcommand{\cU}{\mathcal{U}}
\newcommand{\fa}{\mathfrak{a}}
\newcommand{\fg}{\mathfrak{g}}

\newcommand{\fk}{\mathfrak{k}}
\newcommand{\fl}{\mathfrak{l}}

\newcommand{\fn}{\mathfrak{n}}
\newcommand{\fp}{\mathfrak{p}}
\newcommand{\ft}{\mathfrak{t}}

\newcommand{\rC}{\mathrm{C}}

\newcommand{\rK}{\mathrm{K}}
\newcommand{\rL}{\mathrm{L}}
\newcommand{\rR}{\mathrm{R}}

\newcommand{\tr}{\operatorname{tr}}
\newcommand{\eqspace}{\mathrel{\phantom{=}}}

\newcommand{\Hom}{\operatorname{Hom}}
\newcommand{\End}{\operatorname{End}}
\newcommand{\Det}{\operatorname{Det}}
\newcommand{\Tr}{\operatorname{Tr}}
\newcommand{\Sym}{\operatorname{Sym}}

\newcommand{\Alt}{\operatorname{Alt}}
\newcommand{\Herm}{\operatorname{Herm}}

\newcommand{\rank}{\operatorname{rank}}

\newcommand{\hotimes}{\mathbin{\hat{\otimes}}}

\newcommand{\hsum}{\sideset{}{^\oplus}\sum}
\renewcommand{\Re}{\operatorname{Re}}
\renewcommand{\Im}{\operatorname{Im}}
\renewcommand{\det}{\operatorname{det}}

\newcommand{\op}{\mathrm{op}}

\newcommand{\dist}{\operatorname{dist}}
\newcommand{\artanh}{\operatorname{artanh}}

\newcommand{\Stab}{\operatorname{Stab}}
\newcommand{\diag}{\operatorname{diag}}
\newcommand{\opm}{\mathrm{m}}
\newcommand{\Int}{\operatorname{Int}}
\newcommand{\Ad}{\operatorname{Ad}}
\newcommand{\delete}[1]{}
\allowdisplaybreaks[3]

\title{Holographic operators for the tensor products \\ 
of the spaces of holomorphic functions \\ 
on Hermitian symmetric spaces of tube type}
\author{Ryosuke Nakahama\thanks{This work was supported by JSPS KAKENHI Grant Number JP25K07057, Japan.}
\thanks{Email: ryosuke.nakahama@ntt.com} \\
\textit{NTT Institute for Fundamental Mathematics,} \\ \textit{Communication Science Laboratories, NTT, Inc.} \\
\textit{3-9-11 Midori-cho, Musashino-shi, Tokyo 180-8585, Japan}}
\date{\today}

\begin{document}

\maketitle

\begin{abstract}
We consider a tensor product of two spaces of holomorphic functions on a Hermitian symmetric space of tube type. 
Then generically this is decomposed into a direct sum of irreducible subrepresentations.
In this manuscript, we construct the intertwining operator (holographic operator) from each irreducible summand to the tensor product as an integral operator.
This gives a generalization of the result by Kobayashi--Pevzner \cite{KP3}. 
We also discuss the expression of holographic operators in terms of infinite-order differential operators. 
\bigskip

\noindent \textbf{Keywords}: Hermitian symmetric spaces of tube type; holomorphic discrete series representations; 
holographic operators; branching laws. 
\\ \textbf{2020 Mathematics Subject Classification}: 22E46; 43A85; 17C30. 
\end{abstract}

\section{Introduction}

The purpose of this manuscript is to construct intertwining operators (holographic operators) for tensor products of two spaces of holomorphic functions on a Hermitian symmetric space of tube type. 

First, we consider the upper half plane $\Pi:=\{x\in\BC\mid \Im x>0\}$. We fix $\lambda\in\BC$. 
Then the universal covering group $\widetilde{SL}(2,\BR)$ of $SL(2,\BR)$ acts on the space $\cO(\Pi)=\cO_\lambda(\Pi)$ of holomorphic functions on $\Pi$ by 
\[ \biggl(\tau_\lambda\biggl(\begin{pmatrix}a&b\\c&d\end{pmatrix}^{-1}\biggr)f\biggr)(x):=(cx+d)^{-\lambda}f\biggl(\frac{ax+b}{cx+d}\biggr). \]
The representation $(\tau_\lambda,\cO_\lambda(\Pi))$ is irreducible if $\lambda\notin-\BZ_{\ge 0}$. 
We note that $(cx+d)^{-\lambda}$ is not well-defined on $SL(2,\BR)$ if $\lambda\notin\BZ$, but this is well-defined on the universal covering group. 
Next, for $\lambda,\mu\in\BC$, we consider the tensor product representation $\cO_\lambda(\Pi)\hotimes\cO_\mu(\Pi)$. 
If $\lambda,\mu>0$, then this is decomposed into the direct sum of subrepresentations as 
\[ \cO_\lambda(\Pi)\hotimes\cO_\mu(\Pi)\simeq\hsum_{l\in\BZ_{\ge 0}}\cO_{\lambda+\mu+2l}(\Pi). \]
According to this decomposition, the intertwining operator onto each subrepresentation (symmetry breaking operator, in the terminology of \cite{KP1, KP2}) is given by the following Rankin--Cohen bracket, which was originally introduced in the context of modular forms. 

\begin{theorem}[Rankin--Cohen \cite{C, R}]
Let $\lambda,\mu\in\BC$, $l\in\BZ_{\ge 0}$. Then the map 
\begin{gather*}
\cF^{\lambda,\mu}_{l\downarrow}\colon\cO_\lambda(\Pi)\hotimes\cO_\mu(\Pi)\longrightarrow\cO_{\lambda+\mu+2l}(\Pi), \\
(\cF^{\lambda,\mu}_{l\downarrow} f)(z):=\sum_{j=0}^l(-1)^j\frac{(\lambda+l-j)_j(\mu+j)_{l-j}}{j!(l-j)!}\frac{\partial^l f}{\partial x^{l-j}\partial y^j}(x,y)\biggr|_{x=y=z} 
\end{gather*}
intertwines the $\widetilde{SL}(2,\BR)$-action. 
\end{theorem}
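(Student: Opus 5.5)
The plan is to verify the intertwining property infinitesimally. Differentiating $\tau_\lambda$ gives an action of $\mathfrak{sl}(2,\BR)$ on $\cO_\lambda(\Pi)$ by holomorphic differential operators. Up to sign conventions it is spanned by
\[ E=\frac{d}{dx},\qquad H=x\frac{d}{dx}+\frac{\lambda}{2},\qquad F=x^2\frac{d}{dx}+\lambda x. \]
On $\cO_\lambda(\Pi)\hotimes\cO_\mu(\Pi)=\cO(\Pi\times\Pi)$ the corresponding operators are $E_x+E_y$, $H_x+H_y$ and $F_x+F_y$, with parameters $\lambda$ in $x$ and $\mu$ in $y$.

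First I will reduce to the Lie algebra. The group $\widetilde{SL}(2,\BR)$ is connected, and $\cF^{\lambda,\mu}_{l\downarrow}$ is a continuous linear map between Fréchet spaces of holomorphic functions. Both actions are smooth, and the orbit maps are real-analytic in the group variable. Hence it suffices to show $\cF^{\lambda,\mu}_{l\downarrow}\circ d\tau(X)=d\tau'(X)\circ\cF^{\lambda,\mu}_{l\downarrow}$ for $X=E,H,F$, where $\tau'=\tau_{\lambda+\mu+2l}$. This is because $\widetilde{SL}(2,\BR)$ is generated by $\exp$ of a neighbourhood of $0$, on which both sides are given by convergent exponential series applied to holomorphic functions. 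Alternatively, one can check the group action directly on translations, dilations and the inversion lifted to the covering group; the Lie algebra route avoids the branch bookkeeping.

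For $E$ and $H$ the check is immediate. The operator $\partial_x+\partial_y$ commutes with $\partial_x^{l-j}\partial_y^j$ and with restriction to the diagonal, where it becomes $d/dz$. For $H$, the Euler operator $x\partial_x+y\partial_y$ satisfies $[\partial_x^{a}\partial_y^{b},\,x\partial_x+y\partial_y]=(a+b)\partial_x^a\partial_y^b$. Since $a+b=l$, the constant shifts from $\lambda/2+\mu/2$ to $(\lambda+\mu+2l)/2$, as required.

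The main step is $F$. Write $c_j=(-1)^j\frac{(\lambda+l-j)_j(\mu+j)_{l-j}}{j!(l-j)!}$. I will use the Leibniz identity
\[ \partial_x^{a}\circ(x^2\partial_x+\lambda x)=x^2\partial_x^{a+1}+(2a+\lambda)x\,\partial_x^{a}+a(a-1+\lambda)\,\partial_x^{a-1}, \]
and its analogue in $y$. Applying $\sum_j c_j\partial_x^{l-j}\partial_y^j$ to $(F_x+F_y)f$ and restricting to $x=y=z$ produces three groups of terms.

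The order-$(l+1)$ terms give $z^2\sum_j c_j(\partial_x+\partial_y)\partial_x^{l-j}\partial_y^j f|_{\Delta}$, which is $z^2\frac{d}{dz}(\cF f)$. The order-$l$ terms give $z\bigl(2(l-j)+\lambda+2j+\mu\bigr)=z(\lambda+\mu+2l)$ times the corresponding term of $\cF f$. These two groups reproduce exactly $\bigl(z^2\frac{d}{dz}+(\lambda+\mu+2l)z\bigr)\cF f$.

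So the whole statement reduces to the vanishing of the order-$(l-1)$ terms. Collecting the coefficient of $\partial_x^{l-1-j}\partial_y^{j}$ for $0\le j\le l-1$, this is the recursion
\[ c_j\,(l-j)(l-j-1+\lambda)+c_{j+1}\,(j+1)(j+\mu)=0 . \]
I expect this to be the only genuinely computational point, and the place where a sign convention in $F$ could cause trouble. I will check it directly from the Pochhammer expressions:
\[ \frac{c_{j+1}}{c_j}=-\frac{(l-j)(\lambda+l-j-1)}{(j+1)(\mu+j)} \]
follows from $(\lambda+l-j-1)_{j+1}=(\lambda+l-j-1)(\lambda+l-j)_j$ and $(\mu+j)_{l-j}=(\mu+j)(\mu+j+1)_{l-j-1}$. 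This is exactly the required relation; read the other way, it shows the Rankin--Cohen coefficients are forced up to scalar.
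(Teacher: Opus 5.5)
The paper gives no proof of this statement. It is quoted from Cohen and Rankin as motivation, and the only related tool the paper mentions is Kobayashi--Pevzner's F-method, which characterizes symbols of differential symmetry breaking operators as solutions of a differential equation.

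Your argument is correct, and it is essentially that computation done by hand. The $E$-part, the $H$-part and the top two orders of the $F$-part hold for any operator $f\mapsto\sum_j c_j\,\partial_x^{l-j}\partial_y^j f|_{x=y=z}$. So everything rests on your recursion $c_j(l-j)(l-j-1+\lambda)+c_{j+1}(j+1)(j+\mu)=0$. Because $(\xi\partial_\xi^2+\lambda\partial_\xi)\xi^a=a(a-1+\lambda)\xi^{a-1}$, this recursion is equivalent to $\bigl(\xi\partial_\xi^2+\lambda\partial_\xi+\eta\partial_\eta^2+\mu\partial_\eta\bigr)\sum_j c_j\xi^{l-j}\eta^j=0$ on the symbol. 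Your route is elementary and self-contained. The F-method framework additionally gives the converse: a classification of all such differential intertwiners, including the special parameters where your recursion degenerates. Your closing remark is the generic-parameter version of that.

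Two details need tightening.

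First, the stated justification of the Lie algebra reduction is not accurate. For fixed $t\neq 0$ the series $\sum_k t^k\, d\tau(X)^k f/k!$ need not converge on all of $\Pi$. For translations it is the Taylor series of $f$, whose radius at $x$ can be as small as $\Im x$ (e.g.\ $f(x)=1/x$). So the orbit maps are not real-analytic into $\cO(\Pi)$. The reduction is nevertheless true, because $\cF^{\lambda,\mu}_{l\downarrow}$ only uses derivatives at $(z,z)$:
\begin{itemize}
\item near each $z_0$, both $(\cF^{\lambda,\mu}_{l\downarrow}(\tau_\lambda\hotimes\tau_\mu)(\exp tX)f)(z)$ and $(\tau_{\lambda+\mu+2l}(\exp tX)\cF^{\lambda,\mu}_{l\downarrow}f)(z)$ are convergent power series in $t$ for $|t|<\varepsilon$, uniformly for $z$ near $z_0$, with $\varepsilon$ independent of $f$;
\item their coefficients agree by your infinitesimal identity;
\item the identity theorem in $z$, the one-parameter group law and connectedness of $\widetilde{SL}(2,\BR)$ then give the group statement.
\end{itemize}

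Second, the ratio $c_{j+1}/c_j$ is undefined when $c_j=0$ or $\mu+j=0$, and this happens for some $\lambda,\mu\in\BC$. Use your two factorizations to show directly that the two terms of the recursion are $\pm(-1)^j\frac{(\lambda+l-j-1)_{j+1}(\mu+j)_{l-j}}{j!\,(l-j-1)!}$ with opposite signs. That is a polynomial identity valid for all $\lambda,\mu$. For the same reason, ``forced up to scalar'' holds only for generic parameters.
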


Here, $(\lambda)_j:=\lambda(\lambda+1)(\lambda+2)\cdots(\lambda+j-1)$. 
On the other hand, the intertwining operator of opposite direction (holographic operator) has recently been constructed as an integral operator by Kobayashi--Pevzner. 

\begin{theorem}[Kobayashi--Pevzner \cite{KP3}]\label{thm_intro}
Let $\lambda,\mu\in\BC$, $l\in\BZ_{\ge 0}$, $\Re\lambda, \Re\mu>-l$. Then the map 
\begin{gather*}
\cF^{\lambda,\mu}_{l\uparrow}\colon\cO_{\lambda+\mu+2l}(\Pi)\longrightarrow\cO_\lambda(\Pi)\hotimes\cO_\mu(\Pi), \\
\begin{split}
(\cF^{\lambda,\mu}_{l\uparrow} f)(x,y):=\frac{(x-y)^l}{2^{\lambda+\mu+2l-1}l!}\int_{-1}^1 f\biggl(\frac{(y-x)z+(x+y)}{2}\biggr)(1-z)^{\lambda+l-1}(1+z)^{\mu+l-1}\,dz
\end{split}
\end{gather*}
intertwines the $\widetilde{SL}(2,\BR)$-action. 
\end{theorem}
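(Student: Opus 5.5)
The plan is to verify equivariance directly via a change of variables that turns the integral into an integral over the segment joining $x$ and $y$ in $\Pi$. Put $w:=\frac{(y-x)z+(x+y)}{2}$, so that $w=x$ at $z=-1$ and $w=y$ at $z=1$. Then
\[ w-x=\tfrac{(y-x)(1+z)}{2},\qquad y-w=\tfrac{(y-x)(1-z)}{2},\qquad dz=\tfrac{2\,dw}{y-x}. \]
Substituting gives
\[ (\cF^{\lambda,\mu}_{l\uparrow} f)(x,y)=\frac{(x-y)^l\,(y-x)^{1-\lambda-\mu-2l}}{l!}\int_x^y f(w)\,(y-w)^{\lambda+l-1}(w-x)^{\mu+l-1}\,dw, \]
where the integral is along the straight segment $[x,y]$. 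The powers of $2$ cancel exactly against the normalization $2^{\lambda+\mu+2l-1}$. Several preliminary facts follow at once:
\begin{itemize}
\item By convexity of $\Pi$, the segment lies in $\Pi$, so $f(w)$ makes sense.
\item The hypothesis $\Re\lambda,\Re\mu>-l$ makes the endpoint singularities $(1\mp z)^{\cdot}$ integrable.
\item Differentiation under the integral sign shows the result is holomorphic in $(x,y)\in\Pi^2$, including across the diagonal $x=y$, as is clear from the original $z$-form.
\end{itemize}
Continuity into $\cO_\lambda(\Pi)\hotimes\cO_\mu(\Pi)\subset\cO(\Pi\times\Pi)$ follows from locally uniform estimates.

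For the intertwining property, fix $g=\begin{pmatrix}a&b\\c&d\end{pmatrix}$ and write $g\cdot x=\frac{ax+b}{cx+d}$. We use three identities:
\[ g\cdot u-g\cdot v=\frac{u-v}{(cu+d)(cv+d)},\qquad d(g\cdot w)=\frac{dw}{(cw+d)^2}, \]
and the fact that $g$ maps the segment $[x,y]$ onto a circular arc from $g\cdot x$ to $g\cdot y$ inside $\Pi$. By Cauchy's theorem the integrand is holomorphic in $w$ with integrable endpoint singularities, so the arc may be deformed back to the segment $[g\cdot x,g\cdot y]$.

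Evaluate $\cF^{\lambda,\mu}_{l\uparrow}$ at $(g\cdot x,g\cdot y)$ and substitute $w\mapsto g\cdot w$, then count the automorphy factors:
\begin{itemize}
\item The factors $(cw+d)$ contribute exponent $-(\lambda+l-1)-(\mu+l-1)-2=-(\lambda+\mu+2l)$. This is exactly the weight absorbed by $\tau_{\lambda+\mu+2l}(g^{-1})f$.
\item The factors $(cx+d)$ contribute $-l+(\lambda+\mu+2l-1)-(\mu+l-1)=\lambda$.
\item Symmetrically, the factors $(cy+d)$ contribute $\mu$.
\end{itemize}
This gives
\[ \cF^{\lambda,\mu}_{l\uparrow}\circ\tau_{\lambda+\mu+2l}(g^{-1})=(\tau_\lambda\otimes\tau_\mu)(g^{-1})\circ\cF^{\lambda,\mu}_{l\uparrow}, \]
at least formally.

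The main obstacle is branches: the complex powers, and the meaning of $(cx+d)^{-\lambda}$ on $\widetilde{SL}(2,\BR)$. I would handle this in two steps.
\begin{enumerate}
\item Prove the identity for $g$ in a small neighborhood $U$ of the identity. There $cw+d$ stays near $1$ for $w$ in a compact set, so every power can be taken with the principal branch. The branch of $(y-x)^{\cdot}$ is fixed by $y-x$ lying in a half-plane-like region; to be safe, use the $z$-form, where only $(x-y)^l$ with integer $l$ appears and the quotient $(y-w)/(y-x)=(1-z)/2$ is positive. The multiplicativity of the powers is then legitimate, because each quotient such as $\frac{g\cdot y-g\cdot w}{g\cdot y-g\cdot x}=\frac{y-w}{y-x}\cdot\frac{cx+d}{cw+d}$ is close to the positive real number $(1-z)/2$ times a factor near $1$.
\item Since $\widetilde{SL}(2,\BR)$ is connected and generated by $U$, and both sides are representations (the cocycle property holds on the universal cover), the identity on $U$ extends to the whole group.
\end{enumerate}
Alternatively, one could verify the infinitesimal version for $\mathfrak{sl}(2,\BR)$ by differentiating. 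Translations and dilations are immediate, so only the lower-triangular generator requires the computation above.
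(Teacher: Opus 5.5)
Your proof is correct. Its core step, substituting $w=g.u$ and using $g.u-g.v=\frac{u-v}{(cu+d)(cv+d)}$ and $d(g.u)=\frac{du}{(cu+d)^2}$, is the rank-one case of the paper's Lemma \ref{lem_intertwining}. Your exponent counts $-(\lambda+\mu+2l)$, $\lambda$, $\mu$ are right.

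The paper does not prove this statement directly. It recovers it as the case $\fp^+=\BC$, $T_\Omega=\Pi$ of Corollary \ref{cor_scalar2}, integrating over the geodesic arc $C(x,y)$ rather than the straight segment. The two routes use Cauchy's theorem in opposite places.

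\begin{itemize}
\item In the paper, $C(g.x,g.y)=g.C(x,y)$ holds exactly, so intertwining is a pure change of variables. The effort goes instead into single-valuedness, local boundedness (Lemma \ref{lem_conti}), and holomorphy in $(x,y)$. Holomorphy is Lemma \ref{lem_holomorphic}: a Stokes-type deformation of the contour, analytic continuation in $\lambda,\mu$, and Hartogs. One then still has to deform the geodesic to $[y,x]$ to reach the Kobayashi--Pevzner form.
\item In your route, the straight segment makes single-valuedness (the bases $1\pm z$ are positive), endpoint integrability, and holomorphy across $x=y$ immediate. The cost is deforming the arc $g([x,y])$ back to $[g.x,g.y]$ and tracking branches. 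You handle the branches cleanly by working near the identity and using that a neighborhood generates $\widetilde{SL}(2,\BR)$.
\end{itemize}

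One small point on the branch step. To get a single neighborhood $U$ independent of $(x,y)$, verify the identity for $(x,y)$ in a fixed compact set and extend by the identity theorem. Alternatively, note that for $c\neq 0$ the points $cx+d$ and $cu+d$ lie in the same open half-plane, so principal branches multiply correctly for all $(x,y)$.

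Overall, your argument is more elementary and stays in the $\Pi$-picture. The paper's $G$-equivariant contour $g.(\Omega\cap(u-\Omega))$ is the choice that carries over to higher rank.
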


By putting $\frac{1}{2}((y-x)z+(x+y))=:w$, this map is rewritten as 
\begin{align*}
(\cF^{\lambda,\mu}_{l\uparrow} f)(x,y)
&=\frac{1}{l!}{(x-y)^{-\lambda-\mu-l+1}}\int_{[y,x]} f(w)(w-y)^{\lambda+l-1}(x-w)^{\mu+l-1}\,dw.
\end{align*}
In this manuscript, we consider a generalization of the result by Kobayashi--Pevzner for general simple Hermitian symmetric spaces of tube type, namely for $D\simeq G/K= Sp(r,\BR)/U(r)$, $SU(r,r)/S(U(r)\times U(r))$, $SO^*(4r)/U(2r)$, $SO_0(2,n)/SO(2)\times SO(n)$ and $E_{7(-25)}/U(1)\times E_6$, realized as bounded symmetric domains in complex simple Jordan algebras $\fp^+=\Sym(r,\BC)$, $M(r,\BC)$, $\Alt(2r,\BC)$, $\BC^n$, and $\Herm(3,\BO)^\BC$ respectively. 
Our proof is based on a method different from Kobayashi--Pevzner's one, and gives a new insight applicable to symmetric spaces of higher rank. 

Differential symmetry breaking operators are regarded as generalizations of Rankin--Cohen brackets \cite{C, R} for modular forms and Juhl's operators \cite{Ju} for conformal geometry. 
The construction of differential symmetry breaking operators for the symmetric pairs $(G\times G,\allowbreak\diag(G))$ is studied by, e.g, \cite{BCK, Cl, OR, P, PZ, vDP} and the author \cite[Section 5.2]{N1}, \cite[Section 8]{N3}, 
and that for more general symmetric pairs $(G,G')$ is studied by, e.g., \cite{DL, IKO, KKP, KOSS, KP1, KP2, Ku, PV1, PV2} and the author \cite{N1, N2, N4}. 
Especially, by \cite{KP1, KP2}, symmetry breaking operators for symmetric pairs $(G,G')$ of holomorphic type and for holomorphic discrete series representations of $G$ (or more generally, representations on the spaces of holomorphic functions on Hermitian symmetric spaces) are always given by differential operators (localness theorem), and their symbols are characterized as solutions of certain differential equations (F-method). 
The author \cite{N3} also computes the operator norms of symmetry breaking operators and determines the Parseval--Plancherel type formulas for holomorphic discrete series representations of scalar type. 
Intertwining operators for certain non-symmetric pairs are studied by, e.g., \cite{FL, La}, and non-differential intertwining operators for certain symmetric pairs are also studied by, e.g., \cite{DF1, DF2, FO, FW1, FW2, KL, KS1, KS2, MO}. 

On the other hand, the study on holographic operators for holomorphic discrete series representations is initiated by Kobayashi--Pevzner \cite{KP3}. 
In \cite{KP3}, they construct a holographic operator for $(SL(2,\BR)\times SL(2,\BR),\allowbreak SL(2,\BR))$ as an integral operator on a real line segment (Theorem \ref{thm_intro}), and that for $(SO_0(2,n),SO_0(2,n-1))$ as an integral operator on the complex tube domain. 
The author \cite[Section 3.2]{N1} also constructs holographic operators for general symmetric pairs of holomorphic type and general holomorphic discrete series representations, 
as integral operators on complex bounded symmetric domains (see Theorem \ref{thm_int_expr_D} of this article for tensor product cases). 
In addition, once a holographic operator is constructed, then we can also express it as an infinite-order differential operator (see \cite[Section 3.3]{N1} and Theorem \ref{thm_diff_expr}). 

However, this integral operator (Theorem \ref{thm_int_expr_D}) does not contain Theorem \ref{thm_intro} as a special case, since the operator in Theorem \ref{thm_intro} is given as an integral on a totally real submanifold. 
In this article, we construct holographic operators for the tensor products of holomorphic discrete series representations as integral operators on totally real submanifolds, when $G$ is of tube type, and the subrepresentation satisfies a suitable assumption. 
Especially, if we consider the tensor product of two holomorphic discrete series representations of scalar type, then all subrepresentations satisfy this assumption, and these operators completely work. 
This enables us to analyze the representations via the theory of contour integrals. 
Moreover, we can explicitly compute the differential expression of the holographic operators if three representations (two tensored representations and a subrepresentation) are of scalar type. 
We expect that the combination of several expressions of symmetry breaking and holographic operators (differentials, integrals over complex domains, integrals over totally real submanifolds) provides new insights into other areas, e.g., automorphic forms and conformal geometries. 
We note that the proof of Theorem \ref{thm_intro} by \cite{KP3} is performed by transforming the Rankin--Cohen bracket into the $L^2(\BR_{>0})$-picture, and applying the theory of Jacobi polynomials. 
On the other hand, in this paper, our proof is based on a different approach, and does not treat the relation with differential symmetry breaking operators, $L^2$-pictures, and multivariate orthogonal polynomials. 
Further research is required to find another proof of our theorem parallel to \cite{KP3}, which will make the theory more fruitful. 

This manuscript is organized as follows. 
In Section \ref{section_prelim}, we review Jordan algebras and Hermitian symmetric spaces. 
In Section \ref{section_contour}, we give a definition of certain totally real submanifolds in the Hermitian symmetric spaces, 
and in Section \ref{section_construction}, we construct holographic operators as integral operators on the above totally real submanifolds. 
In Section \ref{section_minKtype}, we consider the tensor products of the spaces of scalar-valued holomorphic functions, and for each irreducible subrepresentation, we compute the image of the minimal $K$-type under holographic operators. 
This corresponds to the determination of the normalization of holographic operators. 
Finally, in Section \ref{section_diff}, when the subrepresentation is also of scalar type, 
we prove that the holographic operator is expressed as an infinite-order differential operator, with the symbol given by a multivariate confluent hypergeometric function.

\section{Preliminaries}\label{section_prelim}

In this section, we review Jordan algebras and Hermitian symmetric spaces. 
For details, see, e.g., \cite[Parts III, V]{FKKLR}, \cite{FK}, \cite{Kor}, \cite{L0}, \cite{L}, \cite{Sat}. 

\subsection{Jordan algebras and Kantor--Koecher--Tits construction}

Let $\fn^+$ be a simple Euclidean Jordan algebra, with the multiplication $\circ$, the unit element $e$, and the complexification $\fp^+=\fn^{+\BC}:=\fn^+\otimes\BC$. 
Let $\tr$ and $\det$ be the trace and determinant polynomials on $\fp^+$, and let $(x|y):=\tr(x\circ y)$ be the associated bilinear form. 
Also, let $\overline{\cdot}\colon\fp^+\to\fp^+$ be the complex conjugate with respect to the real form $\fn^+\subset\fp^+$, so that $(\cdot|\overline{\cdot})$ is positive definite on $\fp^+$. 
We also write $\fn^+=\fn^-$, $\fp^+=\fp^-$, and regard $\fn^-$, $\fp^-$ as the dual spaces of $\fn^+$, $\fp^+$ via $(\cdot|\cdot)$. 
For $x,y\in\fp^+$, let $L(x),P(x),D(x,y),B(x,y)\in\End_\BC(\fp^+)$ be the maps given by 
\begin{align*}
L(x)z&:=x\circ z, \\
P(x)z&:=2x\circ(x\circ z)-(x\circ x)\circ z, \\
D(x,y)z&:=2(x\circ(y\circ z)+z\circ(y\circ x)-(x\circ z)\circ y), \\
B(x,y)z&:=z-D(x,y)z+P(x)P(y)z, 
\end{align*}
and let $h(x,y)$, $|x|_\infty$ be the generic norm and the spectral norm on $\fp^+$. 
Throughout the paper, let $\rank\fp^+=r$, $\dim\fp^+=n=r+\frac{d}{2}r(r-1)$. Then $(\cdot|\cdot)$, $\det$, $h$ and $|\cdot|_\infty$ satisfy 
\begin{gather*}
(x|y)=\tr(x\circ y)=\frac{r}{n}\Tr(L(x\circ y))=\frac{r}{2n}\Tr(D(x,y)), \\
\det(x)^{\frac{2n}{r}}=\Det(P(x)), \qquad 
h(x,y)^{\frac{2n}{r}}=\Det(B(x,y)), \qquad
|x|_\infty^2=\biggl|\frac{1}{2}D(x,\overline{x})\biggr|_{\op,\fp^+}, 
\end{gather*}
where $\Tr$, $\Det$ and $|\cdot|_{\op,\fp^+}$ are the usual trace, determinant and the operator norm on $\End_\BC(\fp^+)$. 
In addition, let $\Omega\subset\fn^+$, $T_\Omega\subset\fp^+$, and $D\subset\fp^+$ be the symmetric cone, the tube domain, and the bounded symmetric domain given by 
\begin{align*}
\Omega:\hspace{-3pt}&=(\text{connected component of }\{x\in\fn^+\mid \det(x)>0\} \text{ which contains }e)\subset\fn^+, \\
T_\Omega:\hspace{-3pt}&=\fn^++\sqrt{-1}\Omega\subset\fp^+, \\
D:\hspace{-3pt}&=(\text{connected component of }\{x\in\fp^+\mid h(x,\overline{x})>0\} \text{ which contains }0) \\
&=\{x\in\fp^+\mid |x|_\infty<1\}\subset\fp^+. 
\end{align*}
Then $T_\Omega$ is biholomorphically diffeomorphic to $D$ via the Cayley transform 
\[ g_c.\colon D\overset{\sim}{\longrightarrow} T_\Omega, \qquad x\longmapsto (x+\sqrt{-1}e)\circ(\sqrt{-1}x+e)^{-1}. \]

Next, for $l\in \End_\BC(\fp^+)$, let $\overline{l},{}^t\hspace{-1pt}l\in\End_\BC(\fp^+)$ be the elements satisfying $(lx|y)=(x|{}^t\hspace{-1pt}ly)$, $\overline{lx}=\overline{l}\overline{x}$ for $x,y\in\fp^+$, and let $l^*:=\overline{{}^t\hspace{-1pt}l}$. Let 
\[ \fk^\BC:=\{l\in\End_\BC(\fp^+)\mid D(lx,y)-D(x,{}^t\hspace{-1pt}ly)=lD(x,y)-D(x,y)l\}, \]
and let $\fk,\fl,\fk_\fl^\BC,\fk_\fl\subset\fk^\BC$ be the Lie subalgebras given by 
\begin{align*}
\fk&:=\{l\in\fk^\BC\mid l^*=-l\}, &
\fl&:=\{l\in\fk^\BC\mid \overline{l}=l\}, \\
\fk_\fl^\BC&:=\{l\in\fk^\BC\mid {}^t\hspace{-1pt}l=-l\}, &
\fk_\fl&:=\fk\cap\fl=\fk\cap\fk_\fl^\BC=\fl\cap\fk_\fl^\BC. 
\end{align*}
We construct a Lie algebra $\fg^\BC$ via the Kantor--Koecher--Tits construction by 
\[ \fg^\BC:=\fp^+\oplus\fk^\BC\oplus\fp^- \]
equipped with the Lie bracket 
\[ [(x,k,y),(z,l,w)]:=(kz-lx,[k,l]+D(x,w)-D(z,w),-{}^t\hspace{-1pt}kw+{}^t\hspace{-1pt}ly), \]
and extend the bilinear form $(\cdot|\cdot)\colon\fp^+\times\fp^-\to\BC$ to the invariant bilinear form $(\cdot|\cdot)\colon\fg^\BC\times\fg^\BC\to\BC$. 
We fix a connected complex Lie group $G^\BC$ corresponding to $\fg^\BC$. 
Next, let $\fg,{}^c\fg,{}^c\fk\subset\fg^\BC$ be the Lie subalgebras, and $\fp,{}^c\fp\subset\fg^\BC$ be the subspaces given by 
\begin{alignat*}{2}
\fg&:=\{(x,k,\overline{x})\mid x\in\fp^+,\; k\in\fk\}&&\subset\fg^\BC, \\
\fp&:=\{(x,0,\overline{x})\mid x\in\fp^+\}&&\subset\fg, \\
{}^c\fg&:=\fn^+\oplus\fl\oplus\fn^-&&\subset\fg^\BC, \\
{}^c\fk&:=\{(x,k,-x)\mid x\in\fn^+,\; k\in\fk_\fl\}&&\subset{}^c\fg, \\
{}^c\fp&:=\{(x,L(y),x)\mid x,y\in\fn^+\}&&\subset{}^c\fg, 
\end{alignat*}
so that $\fg=\fk\oplus\fp$, ${}^c\fg={}^c\fk\oplus{}^c\fp$ holds, and $\fg,{}^c\fg$ are isomorphic via the Cayley transform 
\[ \Ad(g_c):=\Ad\biggl(\exp\biggl(\frac{\pi\sqrt{-1}}{4}(e,0,e)\biggr)\biggr)\colon\fg\overset{\sim}{\longrightarrow} {}^c\fg.  \]
Let $G,{}^c\hspace{-1pt}G,K,{}^c\hspace{-1pt}K, P^\pm, N^\pm\subset G^\BC$ be the connected closed subgroups corresponding to $\fg,{}^c\fg,\fk,{}^c\fk,\fp^\pm,\fn^\pm$ respectively. 
Also, let $L:=K^\BC\cap {}^c\hspace{-1pt}G$, $K_L:=K\cap L$, $K_L^\BC:=K_L\exp(\sqrt{-1}\fk_\fl)\allowbreak\subset K^\BC$. 
Then $L$ acts on $\Omega$, $K_L^\BC$ acts on $\fp^+$ as Jordan algebra automorphisms, and we have 
\[ \Ad_{G^\BC}(K^\BC)|_{\fp^+}\cap GL_\BR(\fn^+)=\Ad_{G^\BC}(L)|_{\fp^+}\cup-\Ad_{G^\BC}(L)|_{\fp^+}\subset GL_\BC(\fp^+), \]
where an element of $-\Ad_{G^\BC}(L)|_{\fp^+}$ maps $\Omega$ to $-\Omega$ (see \cite[Proposition VIII.2.8]{FK}). 
We abbreviate $\Ad(l)x=:lx$, $\Ad(l^{-1})y\allowbreak=:{}^t\hspace{-1pt}ly$ for $l\in K^\BC$, $x\in\fp^+$, $y\in\fp^-$ if there is no confusion. Also, let 
\[ \fp^{+\times}:=\{x\in\fp^+\mid \det(x)\ne 0\}, \]
and let $\widetilde{K}^\BC, \widetilde{\fp}^{+\times}$ be the universal covering spaces of $K^\BC, \fp^{+\times}$ respectively. We extend the maps 
\begin{align*}
P\colon \fp^{+\times}\longrightarrow \Ad_{G^\BC}(K^\BC)|_{\fp^+}\subset GL_\BC(\fp^+), \qquad B\colon D\times D\longrightarrow \Ad_{G^\BC}(K^\BC)|_{\fp^+}\subset GL_\BC(\fp^+)
\end{align*}
to the maps between the universal covering spaces 
\begin{align*}
P\colon \widetilde{\fp}^{+\times}\longrightarrow \widetilde{K}^\BC, \qquad B\colon D\times D\longrightarrow \widetilde{K}^\BC, 
\end{align*}
denoted by the same symbols.

\subsection{Jordan frames and restricted root systems}\label{subsection_root}

We fix a Jordan frame $\{e_1,\ldots,e_r\}\subset\fn^+$, namely, a maximal set of primitive idempotents satisfying $D(e_i,e_j)=0$ for $i\ne j$, and let $h_j:=D(e_j,e_j)\in\fl\subset\fk^\BC$. We set 
\begin{alignat*}{2}
\fa_\fl&:=\{a_1h_1+\cdots+a_rh_r\mid a_j\in\BR\}&&\subset\fl\subset\fk^\BC, \\
\fa_\fl^\BC&:=\{a_1h_1+\cdots+a_rh_r\mid a_j\in\BC\}&&\subset\fk^\BC, 
\end{alignat*}
and we take a Cartan subalgebra $\ft^\BC\subset\fk^\BC$ which contains $\fa_\fl^\BC$. 
We define the elements $\gamma_j\in(\fa_\fl^\BC)^\vee$ of the dual space by $\gamma_i(h_j)=2\delta_{ij}$, and extend to $\gamma_j\in(\ft^\BC)^\vee$, denoted by the same symbols. Then 
\[ d\chi:=\frac{1}{2}(\gamma_1+\cdots+\gamma_r)\in(\ft^\BC)^\vee \]
becomes a character of $\fk^\BC$. Let $\chi$ be the character of $K^\BC$ corresponding to $d\chi$. Then for $x\in\fp^+$, we have 
\[ d\chi(D(x,y))=(x|y), \qquad \chi(P(x))=\det(x), \qquad \chi(B(x,y))=h(x,y). \]
We take a positive restricted root system $\Sigma_+(\fg^\BC,\fa_\fl^\BC)\subset \Sigma(\fg^\BC,\fa_\fl^\BC)\subset(\fa_\fl^\BC)^\vee$ as 
\[ \Sigma_+(\fg^\BC,\fa_\fl^\BC):=\biggl\{\frac{1}{2}(\gamma_i-\gamma_j)\biggm| 1\le i<j\le r\biggr\}\cup\biggl\{\frac{1}{2}(\gamma_i+\gamma_j)\biggm|1\le i\le j\le r\biggr\}, \]
and we take a positive root system $\Delta_+(\fg^\BC,\ft^\BC)\subset \Delta(\fg^\BC,\ft^\BC)\subset(\ft^\BC)^\vee$ compatible with $\Sigma_+(\fg^\BC,\fa_\fl^\BC)$. 

Next, we consider the restricted root subspaces of $\fl$, 
\begin{gather*}
\fl_{ij}:=\biggl\{l\in\fl\biggm| [h_k,l]=\frac{\delta_{ik}-\delta_{jk}}{2}l,\; 1\le k\le r\biggr\} \qquad (1\le i,j\le r), \\
\fn_\fl:=\bigoplus_{1\le i<j\le r} \fl_{ij}, \qquad \fn_\fl^-:=\bigoplus_{1\le j<i\le r}\fl_{ij}. 
\end{gather*}
Let $A_L, N_L, N_L^-\subset L$ be the connected closed subgroups corresponding to $\fa_\fl, \fn_\fl, \fn_\fl^-$ respectively, 
and let $M_L:=Z_{K_L}(\fa_\fl)$ be the centralizer of $\fa_\fl$ in $K_L$. 
Then $M_LA_LN_L, M_LA_LN_L^-\subset L$ are minimal parabolic subgroups.

\subsection{Representations on the spaces of holomorphic functions}

We consider the action of $G^\BC$ on $G^\BC/K^\BC P^-$. Since $P^+K^\BC P^-/K^\BC P^-\subset G^\BC/K^\BC P^-$ is open dense, for $g\in G^\BC$, $x\in\fp^+$, if $g\exp(x)\in P^+K^\BC P^-$, then we write 
\[ g\exp(x)=\exp(\pi^+(g,x))\kappa(g,x)\exp(\pi^-(g,x)) \]
with $\pi^\pm(g,x)\in\fp^\pm$, $\kappa(g,x)\in K^\BC$. Then $\pi^+$ gives the birational action of $G^\BC$ on $\fp^+$. In the following, we abbreviate $\pi^+(g,x)=:g.x$. 
Also, $\kappa$ satisfies the cocycle condition 
\[ \kappa(g_1g_2,x)=\kappa(g_1,g_2.x)\kappa(g_2,x) \qquad (g_1,g_2\in G^\BC,\; x\in\fp^+). \]
Especially, if $g=\exp((u,0,0))$ with $u\in\fp^+$, we have 
\[ \exp((u,0,0)).x=x+u, \qquad \kappa(\exp((u,0,0)),x)=I, \]
if $g=k\in K^\BC$, we have 
\[ k.x=kx, \qquad \kappa(k,x)=k, \]
and if $g=\exp((0,0,v))$ with $v\in\fp^-$, we have 
\begin{gather*}
\exp((0,0,v)).x=x^{-v}, \qquad \Ad(\kappa(\exp((0,0,v)),x))|_{\fp^+}=B(x,-v)^{-1}\in GL_\BC(\fp^+), 
\end{gather*}
where the \emph{quasi-inverse} $x^v$ is defined as 
\begin{equation}
x^v:=B(x,v)^{-1}(x-P(x)v)=(x^{-1}-v)^{-1}. \label{formula_quasiinv}
\end{equation}
Also, if we put $J:=\exp\bigl(\frac{\pi}{2}(-e,0,e)\bigr)\in G^\BC$, then we have 
\[ J.x=-x^{-1},\qquad \Ad(\kappa(J,x))|_{\fp^+}=P(x)^{-1}\in GL_\BC(\fp^+), \]
and for $g_c:=\exp\bigl(\frac{\pi\sqrt{-1}}{4}(e,0,e)\bigr)$, we have 
\begin{align*}
g_c.x&=(x+\sqrt{-1}e)\circ(\sqrt{-1}x+e)^{-1}, & 
\Ad(\kappa(g_c,x))|_{\fp^+}&=P\bigl(2^{-1/2}(\sqrt{-1}x+e)\bigr)^{-1}, \\
g_c^{-1}.x&=(x-\sqrt{-1}e)\circ(-\sqrt{-1}x+e)^{-1}, & 
\Ad(\kappa(g_c^{-1},x))|_{\fp^+}&=P\bigl(2^{-1/2}(-\sqrt{-1}x+e)\bigr)^{-1}. 
\end{align*}
Moreover, the restriction of $\pi^+$ gives the actions of $G, {}^c\hspace{-1pt}G$ on $D, T_\Omega$ respectively, and 
\[ D\simeq G/K\simeq {}^c\hspace{-1pt}G/{}^c\hspace{-1pt}K\simeq T_\Omega \]
becomes a \emph{Hermitian symmetric space of tube type}. 
Let $\widetilde{G}, {}^c\hspace{-1pt}\widetilde{G}, \widetilde{K}^\BC$ denote the universal covering groups of $G, {}^c\hspace{-1pt}G, K^\BC$ respectively. 
We extend $\kappa\colon G\times D\to K^\BC$, $\kappa\colon {}^c\hspace{-1pt}G\times T_\Omega\to K^\BC$ to the maps between universal covering spaces 
$\kappa\colon \widetilde{G}\times D\to \widetilde{K}^\BC$, $\kappa\colon {}^c\hspace{-1pt}\widetilde{G}\times T_\Omega\to \widetilde{K}^\BC$. 

Next, let $(\tau,V_\tau)$ be an irreducible finite-dimensional representation of $\widetilde{K}^\BC$, with the restricted lowest weight $-\frac{1}{2} \sum_{j=1}^r \lambda_j\gamma_j\in(\fa_\fl^\BC)^\vee$, where $\lambda_j-\lambda_{j+1}\in\BZ_{\ge 0}$. 
Then $\widetilde{G}, {}^c\hspace{-1pt}\widetilde{G}$ act on the spaces of $V_\tau$-valued holomorphic functions $\cO(D,V_\tau), \cO(T_\Omega,V_\tau)$ respectively by 
\[ (\hat{\tau}(g)f)(x):=\tau(\kappa(g^{-1},x))^{-1}f(g^{-1}.x). \]
$\cO(D,V_\tau), \cO(T_\Omega,V_\tau)$ are isomorphic by 
\[ \hat{\tau}(g_c)\colon\cO(D,V_\tau)\longrightarrow \cO(T_\Omega,V_\tau), \qquad 
(\hat{\tau}(g_c)f)(x):=\tau(\kappa(g_c^{-1},x))^{-1}f(g_c^{-1}.x). \]
When $(\tau,V_\tau)$ is a unitary representation of $\widetilde{K}$ with the invariant inner product $(\cdot,\cdot)_\tau$, $\hat{\tau}$ preserves the inner products 
\begin{align*}
\langle f_1,f_2\rangle_{\hat{\tau}}&:=C_{\hat{\tau}}\int_D (\tau(B(x,\overline{x}))^{-1}f_1(x),f_2(x))_\tau h(x,\overline{x})^{-\frac{2n}{r}}\,dx
&& (f_1,f_2\in\cO(D,V_\tau)), \\
\langle f_1,f_2\rangle_{\hat{\tau}}&:=C_{\hat{\tau}}\int_{T_\Omega} (\tau(P(\Im x))^{-1} f_1(x),f_2(x))_\tau \det(\Im x)^{-\frac{2n}{r}}\,dx
&& (f_1,f_2\in\cO(T_\Omega,V_\tau)), 
\end{align*}
where we determine the normalizing constant $C_{\hat{\tau}}$ such that $\Vert v\Vert_{\hat{\tau}}=|v|_\tau$ holds for all constant functions $v\in V_\tau\subset\cO(D,V_\tau)$. 
If the restricted lowest weight $-\frac{1}{2} \sum_{j=1}^r \lambda_j\gamma_j$ of $(\tau,V_\tau)$ satisfies $\lambda_r>\frac{2n}{r}-1$, 
then the corresponding Hilbert spaces $\cH(D,V_\tau)\subset\cO(D,V_\tau)$, $\cH(T_\Omega,V_\tau)\subset\cO(T_\Omega,V_\tau)$ are non-zero, and these are called \emph{holomorphic discrete series representations}. For such cases, the $\widetilde{K}$-finite part of the bounded symmetric domain picture coincides with the space of $V_\tau$-valued polynomials. 
\[ \cO(D,V_\tau)_{\widetilde{K}}=\cH(D,V_\tau)_{\widetilde{K}}=\cP(\fp^+)\otimes V_\tau. \]
On the other hand, this does not hold for general unitary subrepresentations of $\cO(D,V_\tau)$ if it is not a holomorphic discrete series representation. For $\mu\in\BC$, we also write 
\begin{align*}
\cO(D,V_\tau\otimes\chi^{-\mu})&=:\cO_\mu(D,V_\tau), &
\cO(D,\chi^{-\mu})&=:\cO_\mu(D), \\
\cO(T_\Omega,V_\tau\otimes\chi^{-\mu})&=:\cO_\mu(T_\Omega,V_\tau), &
\cO(T_\Omega,\chi^{-\mu})&=:\cO_\mu(T_\Omega). 
\end{align*}
Especially, for $\mu>\frac{2n}{r}-1$, the holomorphic discrete series representations $\cH_\mu(D)\subset\cO_\mu(D)$, $\cH_\mu(T_\Omega)\subset\cO_\mu(T_\Omega)$ are given by the inner products 
\begin{align}
\langle f_1,f_2\rangle_\mu&:=C_\mu\int_Df_1(x)\overline{f_2(x)}h(x,\overline{x})^{\mu-\frac{2n}{r}}\,dx && (f_1,f_2\in\cO_\mu(D)), \label{inner_prod}\\
\langle f_1,f_2\rangle_\mu&:=C_\mu\int_{T_\Omega}f_1(x)\overline{f_2(x)}\det(\Im(x))^{\mu-\frac{2n}{r}}\,dx && (f_1,f_2\in\cO_\mu(T_\Omega)), \notag
\end{align}
where 
\[ C_\mu:=\frac{1}{\pi^n}\prod_{j=1}^r\frac{\Gamma\bigl(\mu-\frac{d}{2}(j-1)\bigr)}{\Gamma\bigl(\mu-\frac{n}{r}-\frac{d}{2}(j-1)\bigr)}. \]

\subsection{Tensor product of the spaces of holomorphic functions}

Let $(\sigma,V_\sigma), (\tau,V_\tau)$ be irreducible finite-dimensional representations of $\widetilde{K}$. 
We consider the tensor product representation $(\hat{\sigma}\hotimes\hat{\tau},\cO(D\times D,V_\sigma\otimes V_\tau))$. 
For a while, we assume that $\cO(D,V_\sigma), \cO(D,V_\tau)$ contain the holomorphic discrete series representations $\cH(D,V_\sigma), \allowbreak\cH(D,V_\tau)$, 
and consider the Hilbert tensor product $\cH(D,V_\sigma)\hotimes\cH(D,V_\tau)$. 
Then this is completely reducible, and each subrepresentation contains a $\fp^+$-null vector. 
Since $\fp^+$ acts on $(\cH(D,V_\sigma)\hotimes\cH(D,V_\tau))_{\widetilde{K}}$ as a directional derivative along $\{(x,x)\mid x\in\fp^+\}$, the space of $\fp^+$-null vectors is given by 
\begin{align*}
(\cH(D,V_\sigma)\hotimes\cH(D,V_\tau))_{\widetilde{K}}^{\fp^+}
=\{f(x-y)\mid f\in\cP(\fp^+)\}\otimes V_\sigma\otimes V_\tau
\simeq \cP(\fp^+)\otimes V_\sigma\otimes V_\tau. 
\end{align*}
Then according to the decomposition of this space under $\widetilde{K}$, the following holds. 

\begin{theorem}[{Kobayashi, \cite[Lemma 8.9]{Kmf1}}]
Suppose $\cH(D,V_\sigma), \cH(D,V_\tau)$ are holomorphic discrete series representations of $\widetilde{G}$. If $\cP(\fp^+)\otimes V_\sigma\otimes V_\tau$ is decomposed under $\widetilde{K}$ as 
\[ \cP(\fp^+)\otimes V_\sigma\otimes V_\tau\simeq\bigoplus_{\rho}V_\rho^{\oplus m(\rho)} \qquad (m(\rho)\in\BZ_{\ge 0}), \]
then $\cH(D,V_\sigma)\hotimes\cH(D,V_\tau)$ is decomposed under $\widetilde{G}$ as 
\[ \cH(D,V_\sigma)\hotimes\cH(D,V_\tau)\simeq\hsum_{\rho}\cH(D,V_\rho)^{\oplus m(\rho)}. \]
\end{theorem}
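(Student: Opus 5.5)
We prove the theorem by determining the tensor product through its $\fp^+$-null vectors, i.e.\ its lowest $\widetilde{K}$-types, using the general structure theory of unitary highest weight modules.

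\emph{Step 1 (discrete decomposability and lowest weights).} Both $\cH(D,V_\sigma)$ and $\cH(D,V_\tau)$ are unitary highest weight modules with respect to the same positive system, in which $\fp^-$ raises $\widetilde{K}$-weights and $\fp^+$ lowers them. Hence their Hilbert tensor product is $\widetilde{K}$-admissible. Indeed, the $\widetilde{K}$-types of $(\cP(\fp^+)\otimes V_\sigma)\otimes(\cP(\fp^+)\otimes V_\tau)$ have lowest weights of the form (lowest weight of $V_\sigma\otimes V_\tau$) plus sums of $\fp^-$-weights, and the central element of $\fk$ acts with eigenvalues that grow along these weights. Therefore each $\widetilde{K}$-type occurs with finite multiplicity.

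\emph{Step 2 (reduction to null vectors).} Since the tensor product is unitary and $\widetilde{K}$-admissible, it decomposes as a Hilbert direct sum of irreducible unitary representations. Each summand is again a unitary highest weight module, and so it is determined by its space of $\fp^+$-null vectors. That space is an irreducible $\widetilde{K}$-module $V_\rho$, which generates the summand as $\cU(\fg^\BC)V_\rho=\cU(\fp^-)V_\rho$. Conversely, every irreducible $\widetilde{K}$-submodule $V_\rho$ of the $\fp^+$-null space generates an irreducible summand: its $\fg$-span is a quotient of the generalized Verma module $\cU(\fg^\BC)\otimes_{\cU(\fk^\BC+\fp^+)}V_\rho$, and unitarity forces this quotient to be the irreducible one. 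The multiplicity of a given summand type therefore equals the multiplicity of $V_\rho$ in the null space.

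\emph{Step 3 (identifying the null space and the summands).} By the computation preceding the theorem, the null space is $\{f(x-y)\}\otimes V_\sigma\otimes V_\tau\simeq\cP(\fp^+)\otimes V_\sigma\otimes V_\tau$ as a $\widetilde{K}$-module. We must check that the irreducible module with lowest $\widetilde{K}$-type $V_\rho$ is realized as $\cH(D,V_\rho)$, i.e.\ that it is a holomorphic discrete series representation. This follows from a parameter estimate: every $\rho$ occurring in $\cP(\fp^+)\otimes V_\sigma\otimes V_\tau$ has restricted parameter $\lambda_r(\rho)\ge\lambda_r(\sigma)+\lambda_r(\tau)>\frac{2n}{r}-1$. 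Since $\cP(\fp^+)$ has nonnegative weights in the relevant $\gamma_j$-directions, the smallest entry can only increase. Once the parameter lies in the holomorphic discrete series range, the generalized Verma module is irreducible and unitarizable, with completion $\cH(D,V_\rho)$.

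The main obstacle is Step 2's claim that the $\fp^+$-null vectors split exactly into lowest $\widetilde{K}$-types with no extra null vectors arising inside a summand. Nevertheless, irreducible unitary highest weight modules have a one-dimensional (irreducible) space of $\fp^+$-invariants, and complete reducibility then gives a bijection between irreducible components of the null space and irreducible summands.
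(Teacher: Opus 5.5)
Your argument is correct and follows essentially the route the paper indicates just before the statement (the paper itself cites Kobayashi rather than giving a proof). $\widetilde{K}$-admissibility gives a discrete decomposition. Each irreducible summand is a unitary highest weight module detected by its $\fp^+$-null vectors, and the null space $\{f(x-y)\}\otimes V_\sigma\otimes V_\tau\simeq\cP(\fp^+)\otimes V_\sigma\otimes V_\tau$ then fixes both the summands and their multiplicities.

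Two small touch-ups. First, the null space of an irreducible summand is its lowest $\widetilde{K}$-type $V_\rho$, which is irreducible but not one-dimensional in general. Second, your estimate $\lambda_r(\rho)\ge\lambda_r(\sigma)+\lambda_r(\tau)$ is cleanest if you note that $\lambda_r$ of a $\widetilde{K}$-type equals $\min_\eta(-\eta(h_r))$ over all its weights $\eta$, which follows from the compatibility of $\Delta_+$ with $\Sigma_+$. This quantity is additive on tensor products and can only increase when passing to a constituent.
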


Especially, suppose $V_\sigma=\chi^{-\lambda}, V_\tau=\chi^{-\mu}$. Let 
\begin{align*}
\BZ_{++}^r&:=\{\bk=(k_1,\ldots,k_r)\in\BZ^r\mid k_1\ge\cdots\ge k_r\ge 0\}, \\ 
\BZ_{+}^r&:=\{\bk=(k_1,\ldots,k_r)\in\BZ^r\mid k_1\ge\cdots\ge k_r\},
\end{align*}
and for $\bk\in\BZ_{+}^r$, let $V_{\bk\gamma}^\vee$ be the irreducible representation of $K^\BC$ with the lowest weight $-(k_1\gamma_1+\cdots+k_r\gamma_r)\in(\ft^\BC)^\vee$, so that 
\begin{equation}
\cP(\fp^+)\simeq \bigoplus_{\bk\in\BZ_{++}^r}V_{\bk\gamma}^\vee, \qquad 
\cP(\fp^+)[\det^{-1}]\simeq \bigoplus_{\bk\in\BZ_{+}^r}V_{\bk\gamma}^\vee \label{formula_HKS}
\end{equation}
hold (see \cite[Theorem XI.2.4]{FK}). Then by the former decomposition, we have the following. 

\begin{theorem}[{Kobayashi, \cite[Theorem 8.4]{Kmf1}}]
Suppose $\lambda,\mu>\frac{2n}{r}-1$. Then $\cH_\lambda(D)\hotimes\cH_\mu(D)$ is decomposed under $\widetilde{G}$ as 
\[ \cH_\lambda(D)\hotimes\cH_\mu(D)\simeq\hsum_{\bk\in\BZ_{++}^r}\cH_{\lambda+\mu}(D,V_{\bk\gamma}^\vee). \]
\end{theorem}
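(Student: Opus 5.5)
The statement follows from the previous theorem (Kobayashi, \cite[Lemma 8.9]{Kmf1}) applied with $V_\sigma=\chi^{-\lambda}$, $V_\tau=\chi^{-\mu}$. So the plan is to check the hypotheses and then compute the $\widetilde{K}$-decomposition of $\cP(\fp^+)\otimes\chi^{-\lambda}\otimes\chi^{-\mu}$.

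First, the hypotheses. For the character $\chi^{-\lambda}$ of $\widetilde{K}^\BC$, the restricted lowest weight is $-\lambda\, d\chi=-\frac{1}{2}\sum_{j=1}^r\lambda\gamma_j$. Thus all $\lambda_j$ equal $\lambda$, and the condition $\lambda_r>\frac{2n}{r}-1$ becomes $\lambda>\frac{2n}{r}-1$, which is assumed. Hence $\cH_\lambda(D)$ is a holomorphic discrete series representation, and likewise $\cH_\mu(D)$. The previous theorem therefore applies.

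Next, the $\widetilde{K}$-decomposition. Since $\chi^{-\lambda}\otimes\chi^{-\mu}=\chi^{-(\lambda+\mu)}$ is one-dimensional, we get
\[ \cP(\fp^+)\otimes\chi^{-\lambda}\otimes\chi^{-\mu}\simeq\bigoplus_{\bk\in\BZ_{++}^r}V_{\bk\gamma}^\vee\otimes\chi^{-(\lambda+\mu)}. \]
This uses the Hua--Kostant--Schmid decomposition (\ref{formula_HKS}), which is multiplicity free. Each summand is irreducible, and distinct $\bk$ give distinct lowest weights, so $m(\rho)\le1$. Here one should check that the $\widetilde{K}$-action on $\cP(\fp^+)$ used in the identification of $\fp^+$-null vectors, namely $f\mapsto f(k^{-1}\,\cdot)$ on $f(x-y)$, matches the convention of (\ref{formula_HKS}). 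It does, because $K$ acts linearly on $x-y$.

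By the notation $\cO_\mu(D,V_\tau)=\cO(D,V_\tau\otimes\chi^{-\mu})$, we have $\cH(D,V_{\bk\gamma}^\vee\otimes\chi^{-(\lambda+\mu)})=\cH_{\lambda+\mu}(D,V_{\bk\gamma}^\vee)$. Substituting this into the conclusion of Kobayashi's lemma gives the claimed decomposition. The summands are automatically holomorphic discrete series, since they arise as closed subrepresentations of the unitary tensor product.

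The only mildly delicate point is this bookkeeping of conventions: dual versus non-dual $K$-types, and the twist by $\chi^{-(\lambda+\mu)}$. The rest is immediate.
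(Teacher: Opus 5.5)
Your proposal is correct and matches how the paper obtains this statement: it applies Kobayashi's Lemma 8.9 with $V_\sigma=\chi^{-\lambda}$, $V_\tau=\chi^{-\mu}$ and reads off the multiplicity-free decomposition from (\ref{formula_HKS}). Your closing argument for why the summands are holomorphic discrete series is unnecessary and somewhat loose; it is cleaner to note that $\chi^{-(\lambda+\mu)}\otimes V_{\bk\gamma}^\vee$ has restricted lowest weight $-\frac{1}{2}\sum_{j}(\lambda+\mu+2k_j)\gamma_j$, and that $\lambda+\mu+2k_r>\frac{2n}{r}-1$.
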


Let $(\rho,V_\rho)$ be a representation of $\widetilde{K}^\BC$ appearing in the decomposition of $\cP(\fp^+)\otimes V_\sigma\otimes V_\tau$. We want to find $\widetilde{G}$-intertwining operators 
\begin{align*}
&\cF_{\rho\uparrow}^{\sigma,\tau}\colon\cO(D,V_\rho)\longrightarrow\cO(D,V_\sigma)\hotimes\cO(D,V_\tau), \\
&\cF_{\rho\downarrow}^{\sigma,\tau}\colon\cO(D,V_\sigma)\hotimes\cO(D,V_\tau) \longrightarrow \cO(D,V_\rho). 
\end{align*}
Such $\cF_{\rho\uparrow}^{\sigma,\tau}$ are called holographic operators, and $\cF_{\rho\downarrow}^{\sigma,\tau}$ are called symmetry breaking operators, in the terminology of \cite{KP1, KP2, KP3}. 
By \cite{N1}, we have integral expressions of these intertwining operators. 

\begin{theorem}[{\cite[Corollary 3.5]{N1}}]\label{thm_int_expr_D}
Suppose $\cH(D,V_\sigma), \cH(D,V_\tau)$ are holomorphic discrete series representations of $\widetilde{G}$. 
Let $(\rho,V_\rho)$ be an irreducible representation of $\widetilde{K}$ such that a non-zero operator-valued polynomial 
\[ \rK(x)\in\bigl(\cP(\fp^+)\otimes\Hom_\BC(V_\rho,V_\sigma\otimes V_\tau)\big)^{\widetilde{K}} \]
exists, and define an operator-valued function $\hat{\rK}(x,y;\overline{w})$ by 
\begin{align*}
\hat{\rK}(x,y;\overline{w}):={}&\bigl(\sigma(B(x,\overline{w}))\otimes\tau(B(y,\overline{w}))\bigr) \rK(x^{\overline{w}}-y^{\overline{w}}) \\
&\in\cO(D\times D\times\overline{D},\Hom_\BC(V_\rho,V_\sigma\otimes V_\tau)),  
\end{align*}
where $x^w$ is the quasi-inverse defined as in (\ref{formula_quasiinv}). Then the linear maps 
\begin{gather*}
\tilde{\cF}^{\sigma,\tau}_{\rho,\rK\uparrow}\colon\cH(D,V_\rho)\longrightarrow \cH(D,V_\sigma)\hotimes\cH(D,V_\tau), \\
(\tilde{\cF}^{\sigma,\tau}_{\rho,\rK\uparrow}f)(x,y):=\langle f,\hat{\rK}(x,y;\overline{\cdot})^*\rangle_{\hat{\rho}}
=C_{\hat{\rho}}\int_D \hat{\rK}(x,y;\overline{w})\rho(B(w,\overline{w}))^{-1}f(w) h(w,\overline{w})^{-\frac{2n}{r}}\,dw, \\
\tilde{\cF}^{\sigma,\tau}_{\rho,\rK\downarrow}\colon\cH(D,V_\sigma)\hotimes\cH(D,V_\tau) \longrightarrow \cH(D,V_\rho), \\
\begin{split}
&(\tilde{\cF}^{\sigma,\tau}_{\rho,\rK\downarrow}f)(w):=\langle f,\hat{\rK}(\cdot,\cdot;\overline{w})\rangle_{\hat{\sigma}\otimes\hat{\tau}} \\
&=C_{\hat{\sigma}}C_{\hat{\tau}}\int_{D\times D} \hat{\rK}(x,y;\overline{w})^* \bigl(\sigma(B(x,\overline{x}))\otimes\tau(B(y,\overline{y}))\bigr)^{-1}f(x,y) h(x,\overline{x})^{-\frac{2n}{r}}h(y,\overline{y})^{-\frac{2n}{r}}\,dxdy
\end{split}
\end{gather*}
intertwine the $\widetilde{G}$-actions. 
\end{theorem}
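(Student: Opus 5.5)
The plan is to reduce everything to a reproducing-kernel argument. Write $\rK_{\hat\rho}(w,\overline{z})=\rho(B(w,\overline{z}))$ for the reproducing kernel of $\cH(D,V_\rho)$. The key step is to show that $\hat{\rK}(x,y;\overline{w})$ is $\widetilde{G}$-equivariant in the following sense. For all $g\in\widetilde{G}$,
\begin{align*}
&\bigl(\sigma(\kappa(g,x))\otimes\tau(\kappa(g,y))\bigr)\hat{\rK}(x,y;\overline{w})\rho(\kappa(g,w))^* \\
&\quad=\hat{\rK}(g.x,g.y;\overline{g.w}),
\end{align*}
with the standard identity $\kappa(g,w)^*=\kappa(g,w)^{*}$ defined via the conjugation attached to $G$. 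Once this holds, both displayed operators are formally intertwining. The first is the pairing of $f$ against a kernel which is covariant in $(x,y)$ and contravariant in $w$. The second is its adjoint. The change of variables $w\mapsto g.w$ preserves the invariant measure $h(w,\overline{w})^{-2n/r}dw$, and the factor $\rho(B(w,\overline{w}))^{-1}$ transforms by $\rho(\kappa(g,w))^{*-1}\rho(B(w,\overline{w}))^{-1}\rho(\kappa(g,w))^{-1}$.

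To prove the covariance I would check it on generators of $G^\BC$ acting birationally, then extend by analytic continuation in $g$. The pieces are $K^\BC$, the translations $\exp(\fp^+)$, and $\exp(\fp^-)$, or alternatively $J$.

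For $k\in K^\BC$ the identity follows from $\widetilde{K}$-invariance of $\rK$ together with $B(kx,\overline{kw})=kB(x,\overline{w})k^{-1}$ and $(kx)^{\overline{kw}}=k(x^{\overline{w}})$.

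For translations on the $\fp^-$ side, $\exp((0,0,v))$ with $\kappa=B(x,-v)^{-1}$, one uses the quasi-inverse addition formula $(x^{v})^{u}=x^{v+u}$ and $B(x,v)B(x^v,u)=B(x,v+u)$. For $\exp((u,0,0))$, which acts on the $\overline{w}$ variable through the conjugate, one uses the symmetric identities in the other slot.

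The one nontrivial identity I expect is
\[
x^{\overline{w}}-y^{\overline{w}}
=B(x,\overline{w})^{-1}\,(\ldots)\,(x-y)
\]
in a suitable covariant form. More precisely, it is the fact that $(x,y,\overline{w})\mapsto x^{\overline{w}}-y^{\overline{w}}$ transforms under $g$ like an element of $\fp^+$ twisted by $\kappa(g,\cdot)$. Combined with invariance of $\rK$, this should produce the required factors.

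This covariance computation is the main obstacle. I would first try to avoid the messy direct computation. The idea is to note that $\hat{\rK}(x,y;\overline{w})$ is the unique sesquiholomorphic extension of its restriction to $\overline{w}=0$, namely $\rK(x-y)$, which is a $\fp^+$-null, $\widetilde{K}$-equivariant vector. Hence $\hat{\rK}(\cdot,\cdot;\overline{w})=(\hat\sigma\otimes\hat\tau)(g_w)\rK(\cdot-\cdot)\rho(\cdots)$ for $g_w\in G$ with $g_w.0=w$, and it suffices to check consistency for the stabilizer.

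Finally one must check boundedness, so that the operators land in the Hilbert spaces. The first operator maps into $\cH(D,V_\sigma)\hotimes\cH(D,V_\tau)$ because $\rK(x-y)\otimes v$ lies in the $\widetilde{K}$-finite, $\fp^+$-null part, hence generates a subrepresentation isomorphic to $\cH(D,V_\rho)$ by the decomposition theorem. The operator is then a multiple of the corresponding isometric embedding, by Schur's lemma applied to the $\widetilde{K}$-type $V_\rho$. The second operator is its adjoint.
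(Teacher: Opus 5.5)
The paper gives no proof of this statement; it is quoted from \cite[Corollary 3.5]{N1}. So your argument has to stand on its own, and its central step is missing.

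The parts that depend on the covariance identity are essentially fine. The change of variables gives formal intertwining. Since $\hat{\rK}(x,y;0)=\rK(x-y)$, the upward operator sends a constant $v$ to $\rK(x-y)v$. It therefore agrees with the abstract embedding on $d\hat{\rho}(\cU(\fg^\BC))V_\rho$, hence everywhere by continuity, and the downward operator is its adjoint.

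The gap is that covariance of the explicit kernel $(\sigma(B(x,\overline w))\otimes\tau(B(y,\overline w)))\rK(x^{\overline w}-y^{\overline w})$ is never proved, and both of your routes to it fall short.
\begin{itemize}
\item Your shortcut is circular. A sesquiholomorphic kernel is \emph{not} determined by its restriction to $\overline w=0$: you can add any sesquiholomorphic term vanishing there. Uniqueness holds only among kernels already known to be covariant. Translating $\rK(x-y)$ by $g_w\in G$ and checking the stabilizer $K$ does give some covariant kernel $\Phi$. It is antiholomorphic in $w$ because $\rK(x-y)$ is $\fp^+$-null, a point you also leave unsaid. But nothing you wrote identifies $\Phi$ with the explicit formula, so the ``Hence'' assumes what is to be proved.
\item The generator check stops exactly where it becomes nontrivial. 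For $\exp((u,0,0))$ you need the Jordan pair identities $(x+u)^\zeta=u^\zeta+B(u,\zeta)^{-1}x^{\zeta^u}$ and $B(x+u,\zeta)=B(x,\zeta^u)B(u,\zeta)$. These give $(x+u)^\zeta-(y+u)^\zeta=B(u,\zeta)^{-1}(x^{\zeta^u}-y^{\zeta^u})$, and you did not derive them.
\item Before any generator check, the covariance must be rewritten holomorphically in $g\in G^\BC$. The $\fp^-$-variable should be acted on through $P^-K^\BC P^+$, with the matching holomorphic cocycle, because $\overline{g.w}$ and $\rho(\kappa(g,w))^*$ are not holomorphic in $g$. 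For instance, $B(kx,\overline{kw})=kB(x,\overline w)k^{-1}$ fails in general for $k\in K^\BC\setminus K$; the correct statement uses ${}^t k^{-1}$ on $\fp^-$.
\end{itemize}

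The simplest repair avoids the covariance computation entirely and uses only what your last paragraph already invokes. Let $\iota\colon\cH(D,V_\rho)\to\cH(D,V_\sigma)\hotimes\cH(D,V_\tau)$ be the bounded intertwining embedding with $\iota(v)=\rK(x-y)v$.

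Use the formulas for $\exp((0,0,v))$ in the preliminaries, applying the defining formula of $\hat{\rho}$ formally to elements of $P^-$. Then the reproducing kernel is a $P^-$-translate of a constant:
\[
\rho(B(z,\overline w))v=(\hat{\rho}(\exp((0,0,\overline w)))v)(z).
\]
Likewise,
\[
((\hat\sigma\hotimes\hat\tau)(\exp((0,0,\overline w)))F)(x,y)=(\sigma(B(x,\overline w))\otimes\tau(B(y,\overline w)))F(x^{\overline w},y^{\overline w}).
\]
So the correct version of your ``translate of $\rK(x-y)$'' uses $\exp((0,0,\overline w))\in P^-$, not $g_w\in G$, and no $\rho$-factor appears.

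Since $\iota$ commutes with $d\hat{\rho}(\fp^-)$ on $\widetilde K$-finite vectors, compare Taylor coefficients in $\overline w$ at $0$ and continue analytically. This gives $\iota(\rho(B(\cdot,\overline w))v)=\hat{\rK}(\cdot,\cdot;\overline w)v$. The reproducing property then yields $\iota=\tilde\cF^{\sigma,\tau}_{\rho,\rK\uparrow}$ and $\iota^*=\tilde\cF^{\sigma,\tau}_{\rho,\rK\downarrow}$. Intertwining and boundedness are then automatic, and the only Jordan-theoretic input is the quasi-inverse formula for $\exp(\fp^-)$ that the paper already records.
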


A holographic operator $\cF^{\sigma,\tau}_{\rho\uparrow}$ is also realized as an infinite-order differential operator, 
and a symmetry breaking operator $\cF^{\sigma,\tau}_{\rho\downarrow}$ is also realized as a finite-order differential operator. 
Let $2D:=\{2x\mid x\in D\}$. 

\begin{theorem}\label{thm_diff_expr}
\begin{enumerate}
\item Suppose that the map 
\[ \cF_{\rho\uparrow}^{\sigma,\tau}=\cF_{\rho\uparrow,w}^{\sigma,\tau}\colon\cO(D,V_\rho)\longrightarrow\cO(D,V_\sigma)\hotimes\cO(D,V_\tau)  \]
intertwines the $\widetilde{G}$-action. We define the functions $F^{\sigma,\tau}_{\rho\uparrow,\rC}(x;\zeta)$, $F^{\sigma,\tau}_{\rho\uparrow,\rL}(x;\zeta)$, 
$F^{\sigma,\tau}_{\rho\uparrow,\rR}(x;\zeta)\allowbreak\in\cO(D\times\fp^-,\Hom_\BC(V_\rho,V_\sigma\otimes V_\tau))$ by 
\begin{align*}
F^{\sigma,\tau}_{\rho\uparrow,\rC}(x;\zeta)&:=(\cF^{\sigma,\tau}_{\rho\uparrow}e^{(w|\zeta)}I_\rho)_w(x,-x), \\
F^{\sigma,\tau}_{\rho\uparrow,\rL}(x;\zeta)&:=(\cF^{\sigma,\tau}_{\rho\uparrow}e^{(w|\zeta)}I_\rho)_w(x,0), \\
F^{\sigma,\tau}_{\rho\uparrow,\rR}(x;\zeta)&:=(\cF^{\sigma,\tau}_{\rho\uparrow}e^{(w|\zeta)}I_\rho)_w(0,x). 
\end{align*}
Then $F^{\sigma,\tau}_{\rho\uparrow,\rL}(x;\zeta)$, $F^{\sigma,\tau}_{\rho\uparrow,\rR}(x;\zeta)$ are holomorphically continued to $2D\times\fp^-$, 
and for $f(w)\in\cO(D,V_\rho)_{\widetilde{K}}=\cP(\fp^+)\otimes V_\rho$, we have 
\begin{align*}
&(\cF^{\sigma,\tau}_{\rho\uparrow}f)(x,y)=F^{\sigma,\tau}_{\rho\uparrow,\rC}\biggl(\frac{x-y}{2};\frac{\partial}{\partial w}\biggr)f(w)\biggr|_{w=(x+y)/2} \\
&=F^{\sigma,\tau}_{\rho\uparrow,\rL}\biggl(x-y;\frac{\partial}{\partial w}\biggr)f(w)\biggr|_{w=y}=F^{\sigma,\tau}_{\rho\uparrow,\rR}\biggl(y-x;\frac{\partial}{\partial w}\biggr)f(w)\biggr|_{w=x}. 
\end{align*}
\item Suppose that the map 
\[ \cF_{\rho\downarrow}^{\sigma,\tau}=\cF_{\rho\downarrow,(x,y)}^{\sigma,\tau}\colon\cO(D,V_\sigma)\hotimes\cO(D,V_\tau)\longrightarrow \cO(D,V_\rho)  \]
intertwines the $\widetilde{G}$-action. We define the polynomial $F^{\sigma,\tau}_{\rho\downarrow}(\xi,\eta)\in\cP(\fp^-\oplus\fp^-,\Hom_\BC\allowbreak(V_\sigma\otimes V_\tau,V_\rho))$ by 
\[ F^{\sigma,\tau}_{\rho\downarrow}(\xi,\eta):=\bigl(\cF^{\sigma,\tau}_{\rho\downarrow}e^{(x|\xi)+(y|\eta)}I_{\sigma\otimes\tau}\bigr)_{(x,y)}(0). \]
Then for $f(x,y)\in\cO(D,V_\sigma)\hotimes\cO(D,V_\tau)$, we have 
\[ (\cF^{\sigma,\tau}_{\rho\downarrow}f)(w)=F^{\sigma,\tau}_{\rho\downarrow}\biggl(\frac{\partial}{\partial x},\frac{\partial}{\partial y}\biggr)f(x,y)\biggr|_{x=y=w}. \]
\end{enumerate}
\end{theorem}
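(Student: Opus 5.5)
The argument uses only two inputs: equivariance of $\cF$ under the translations $\exp((u,0,0))\in\widetilde{G}^\BC$, which act on $\cO(D,\cdot)$ without a cocycle factor, and the Taylor expansion of polynomials. Neither $\fk$- nor $\fp^-$-equivariance is needed beyond justifying holomorphic continuation. For (1), fix $f\in\cP(\fp^+)\otimes V_\rho$. We have $f(w)=f(\partial_\zeta)e^{(w|\zeta)}|_{\zeta=0}$, where $f(\partial_\zeta)$ is a finite-order constant-coefficient operator, so by linearity
\[ (\cF f)(x,y)=f\Bigl(\frac{\partial}{\partial\zeta}\Bigr)\bigl(\cF e^{(w|\zeta)}I_\rho\bigr)(x,y)\Bigr|_{\zeta=0}. \]
Interchanging $\cF$ with $\partial_\zeta$ is legitimate because $\zeta\mapsto e^{(\cdot|\zeta)}$ is holomorphic into $\cO(D,V_\rho)$ and $\cF$ is continuous. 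This continuity is automatic for intertwiners between these Fréchet spaces, or can be assumed as part of the setting.

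We next exploit translation covariance. The Lie algebra element $(u,0,0)\in\fp^+$ acts on $\cO(D,V_\rho)$ by $-\partial_u$ on $w$, and on the tensor product by $-(\partial_{u,x}+\partial_{u,y})$; this action has no cocycle term, since $\kappa(\exp((u,0,0)),x)=I$. Intertwining at the Lie algebra level, which is valid since $\cF$ commutes with the $\widetilde{G}$-action and the derivative exists on smooth vectors, or directly on $\cO$, gives
\[ \cF(\partial_u f)=(\partial_{u,x}+\partial_{u,y})\cF f. \]
Applying this to $e^{(w|\zeta)}$, for which $\partial_u e^{(w|\zeta)}=(u|\zeta)e^{(w|\zeta)}$, shows that $\Phi_\zeta(x,y):=(\cF e^{(w|\zeta)})(x,y)$ satisfies $(\partial_{u,x}+\partial_{u,y}-(u|\zeta))\Phi_\zeta=0$ for all $u$. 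Hence
\[ \Phi_\zeta(x+t u,y+t u)=e^{t(u|\zeta)}\Phi_\zeta(x,y) \]
as long as the points stay in $D\times D$. Choosing the shift $s=(x+y)/2$ gives $\Phi_\zeta(x,y)=e^{((x+y)/2|\zeta)}F_\rC\bigl(\frac{x-y}{2};\zeta\bigr)$. Choosing $s=y$ instead gives $\Phi_\zeta(x,y)=e^{(y|\zeta)}F_\rL(x-y;\zeta)$, and this identity is what defines $F_\rL$ on $\{x-y: x,y\in D\}=D-D$. That set equals $2D$, since $D$ is convex and balanced: $D-D=D+D=2D$. This gives the claimed holomorphic continuation to $2D\times\fp^-$; it is well-defined because $\Phi_\zeta(x,y)e^{-(y|\zeta)}$ depends only on $x-y$, and $\{(x,y)\in D\times D:x-y=z\}$ is convex, hence connected. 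The case of $F_\rR$ is symmetric. Substituting into the displayed formula, $f(\partial_\zeta)\bigl[e^{(w_0|\zeta)}F(z;\zeta)\bigr]|_{\zeta=0}=F(z;\partial_w)f(w)|_{w=w_0}$. This is the standard duality $g(\partial_\zeta)[e^{(w_0|\zeta)}h(\zeta)]_{\zeta=0}=h(\partial_w)g(w)|_{w_0}$ for entire $h$ and polynomial $g$, checked on monomials. It yields all three formulas.

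For (2) we argue the same way with the roles reversed. Expanding $f$ in its Taylor series at $(w,w)$ requires convergence in $\cO(D\times D)$, so instead we use translation to reduce to evaluation at $0$. Intertwining under $\exp((w,0,0))$ gives
\[ (\cF f)(w)=\bigl(\cF f(\cdot+w,\cdot+w)\bigr)(0), \]
so it suffices to treat the point $0$. The functional $f\mapsto(\cF f)(0)$ is continuous on $\cO(D\times D)$. It is also $\widetilde{K}$-equivariant up to a twist and annihilated appropriately under $\fp^-$; these properties force it to be supported at the origin, i.e. to be a finite-order differential operator at $0$ (the localness theorem of Kobayashi--Pevzner \cite{KP1}). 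Writing a finite-order functional as $F(\partial_x,\partial_y)|_0$ with $F(\xi,\eta)=\cF(e^{(x|\xi)+(y|\eta)})(0)$ is then the same duality as above. The main obstacle is this finiteness/locality step; I would simply invoke \cite{KP1} rather than reprove it. In (1), the only delicate point is the continuity used to pass $\cF$ through $\partial_\zeta$ and the Lie-algebra differentiation.
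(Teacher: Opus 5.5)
Your argument for (1) is essentially the paper's proof. The $\fp^+$-equivariance gives $\Phi_\zeta(x+tu,y+tu)=e^{t(u|\zeta)}\Phi_\zeta(x,y)$. The shifts $-(x+y)/2$, $-y$, $-x$ then produce $F_\rC$, $F_\rL$, $F_\rR$; your observation $D-D=2D$ just makes the paper's continuation to $2D\times\fp^-$ explicit. Finally, your duality $g(\partial_\zeta)[e^{(w_0|\zeta)}h(\zeta)]_{\zeta=0}=h(\partial_w)g(w)|_{w=w_0}$ is the paper's identity $f(w)=\exp(w|\partial_z)f(z)|_{z=0}$ read in the other direction.

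For (2) the paper gives no argument of its own and simply cites \cite[Theorem 3.10]{N1}, as it also does for the $F_\rC$ identity. Your localness-plus-duality sketch is a reasonable route. However, the reduction to $w=0$ via $\exp((w,0,0))$ is not valid as written: $\exp((w,0,0))\notin\widetilde{G}$ for $w\ne0$, and $f(\cdot+w,\cdot+w)$ need not lie in $\cO(D\times D)$.

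Do this step infinitesimally, exactly as in your part (1). From $\partial_u\circ\cF=\cF\circ(\partial_{u,x}+\partial_{u,y})$ and the local formula at the origin, you get
$\partial_u^k(\cF f)(0)=F(\partial_x,\partial_y)(\partial_{u,x}+\partial_{u,y})^kf|_{(0,0)}$ for all $u\in\fp^+$ and $k\ge0$. Hence $\cF f$ and $w\mapsto F(\partial_x,\partial_y)f|_{x=y=w}$ have the same Taylor expansion at $0$, by polarization, and so coincide on $D$.
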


\begin{proof}
Since the 1st equality of (1) and (2) are proved in \cite[Theorem 3.10]{N1}, we focus on the 2nd and 3rd equalities of (1). Since $\cF^{\sigma,\tau}_{\rho\uparrow}$ intertwines the $\fp^+$-action, 
for $u\in\fp^+$, $t\in\BR$, if $x,y,x+tu,y+tu\in D$, then we have 
\begin{align*}
&\frac{d}{dt}(\cF^{\sigma,\tau}_{\rho\uparrow}e^{(w|\zeta)}I_\rho)_w(x+tu,y+tu)=\frac{d}{ds}\biggr|_{s=0}(\cF^{\sigma,\tau}_{\rho\uparrow}e^{(w|\zeta)}I_\rho)_w(x+(t+s)u,y+(t+s)u) \\
&=-d(\hat{\sigma}\hotimes\hat{\tau})((u,0,0))(\cF^{\sigma,\tau}_{\rho\uparrow}e^{(w|\zeta)}I_\rho)_w(x+tu,y+tu) \\
&=-(\cF^{\sigma,\tau}_{\rho\uparrow}d\hat{\rho}((u,0,0))e^{(w|\zeta)}I_\rho)_w(x+tu,y+tu) \\
&=\frac{d}{ds}\biggr|_{s=0}(\cF^{\sigma,\tau}_{\rho\uparrow}e^{(w+su|\zeta)}I_\rho)_w(x+tu,y+tu)=(u|\zeta)(\cF^{\sigma,\tau}_{\rho\uparrow}e^{(w|\zeta)}I_\rho)_w(x+tu,y+tu). 
\end{align*}
Hence we have 
\[ (\cF^{\sigma,\tau}_{\rho\uparrow}e^{(w|\zeta)}I_\rho)_w(x+tu,y+tu)=e^{t(u|\zeta)}(\cF^{\sigma,\tau}_{\rho\uparrow}e^{(w|\zeta)}I_\rho)_w(x,y). \]
Since the right hand side is well-defined for $(x,y,t,u)\in D\times D\times\BR\times\fp^+$, 
the left hand side is also holomorphically continued to this region. 
Especially, by putting $tu=-(x+y)/2$, $-y$ or $-x$, we have 
\begin{align*}
&(\cF^{\sigma,\tau}_{\rho\uparrow}e^{(w|\zeta)}I_\rho)_w(x,y) \\
&=e^{(\frac{x+y}{2}|\zeta)}(\cF^{\sigma,\tau}_{\rho\uparrow}e^{(w|\zeta)}I_\rho)_w\biggl(\frac{x-y}{2},\frac{y-x}{2}\biggr)
=e^{(\frac{x+y}{2}|\zeta)}F^{\sigma,\tau}_{\rho\uparrow,\rC}\biggl(\frac{x-y}{2};\zeta\biggr) \\
&=e^{(y|\zeta)}(\cF^{\sigma,\tau}_{\rho\uparrow}e^{(w|\zeta)}I_\rho)_w(x-y,0)=e^{(y|\zeta)}F^{\sigma,\tau}_{\rho\uparrow,\rL}(x-y;\zeta) \\
&=e^{(x|\zeta)}(\cF^{\sigma,\tau}_{\rho\uparrow}e^{(w|\zeta)}I_\rho)_w(0,y-x)=e^{(x|\zeta)}F^{\sigma,\tau}_{\rho\uparrow,\rR}(y-x;\zeta). 
\end{align*}
Then for $f(w)\in\cO(D,V_\rho)_{\widetilde{K}}=\cP(\fp^+)\otimes V_\rho$, since $\exp\bigl(w\big|\frac{\partial}{\partial z}\bigr)f(z)=f(w+z)$ holds, we have 
\begin{align*}
&(\cF^{\sigma,\tau}_{\rho\uparrow}f)(x,y)=\biggl(\cF^{\sigma,\tau}_{\rho\uparrow}\exp\biggl(w\bigg|\frac{\partial}{\partial z}\biggr)I_\rho\biggr)_w(x,y)f(z)\biggr|_{z=0} \\
&=\exp\biggl(y\bigg|\frac{\partial}{\partial z}\biggr)F^{\sigma,\tau}_{\rho\uparrow,\rL}\biggl(x-y;\frac{\partial}{\partial z}\biggr)f(z)\biggr|_{z=0} \\
&=F^{\sigma,\tau}_{\rho\uparrow,\rL}\biggl(x-y;\frac{\partial}{\partial z}\biggr)f(z+y)\biggr|_{z=0}=F^{\sigma,\tau}_{\rho\uparrow,\rL}\biggl(x-y;\frac{\partial}{\partial z}\biggr)f(z)\biggr|_{z=y}. 
\end{align*}
This proves the 2nd equality of (1). The 1st and 3rd equalities are also proved similarly. 
\end{proof}

We note that these theorems hold without the assumption that $G$ is of tube type. 
The symbols of differential symmetry breaking operators are also characterized as solutions of certain differential equations (F-method, see \cite{KP1, KP2}). 
If $\sigma,\tau$ are one-dimensional, then the operator norms of $\tilde{\cF}^{\sigma,\tau}_{\rho,\rK\downarrow}$ are determined in \cite[Corollary 8.7]{N3}. 
When $\fp^+=\BC$, $G=SL(2,\BR)$, the differential symmetry breaking operator $\tilde{\cF}^{\sigma,\tau}_{\rho,\rK\downarrow}$ coincides with the Rankin--Cohen bidifferential operator up to a constant multiple. 

In Theorem \ref{thm_int_expr_D}, the intertwining operators are given as integral operators on the complex domain $D$. 
On the other hand, in \cite{KP3}, the holographic operator for $G=SL(2,\BR)$ is given as an integral operator on some real line segment. 
In this paper, we seek realizations of holographic operators as integral operators on some totally real submanifolds in $D$ or $T_\Omega$.

\section{Definition of contour of integration}\label{section_contour}

In this section, for suitable $(x,y)\in D\times D$ or $T_\Omega\times T_\Omega$, we define a totally real submanifold $C(x,y)\subset D$ or $T_\Omega$ used in the integral operator given later, and check some properties of $C(x,y)$. To do this, first let 
\begin{equation}\label{formula_Dtimes}
\begin{split}
\fp^{+\times}&:=\{x\in\fp^+\mid \det(x)\ne 0\}, \\
D^\times&:=\{x\in D\mid \det(x)\ne 0\}, \\
(D\times D)^\times&:=\{(x,y)\in D\times D\mid \det(x-y)\ne 0\}, \\
(T_\Omega\times T_\Omega)^\times&:=\{(x,y)\in T_\Omega\times T_\Omega\mid \det(x-y)\ne 0\}. 
\end{split}
\end{equation}

\begin{lemma}\label{lem_cov_det}
\begin{enumerate}
\item For $x,y\in T_\Omega$, $g\in {}^c\hspace{-1pt}G$, we have 
\begin{gather*}
P(g.x-g.y)=\kappa(g,x)P(x-y){}^t\hspace{-1pt}\kappa(g,y)=\kappa(g,y)P(x-y){}^t\hspace{-1pt}\kappa(g,x), \\
P(g_c^{-1}.x-g_c^{-1}.y)=\kappa(g_c^{-1},x)P(x-y){}^t\hspace{-1pt}\kappa(g_c^{-1},y)=\kappa(g_c^{-1},y)P(x-y){}^t\hspace{-1pt}\kappa(g_c^{-1},x). 
\end{gather*}
\item For $x,y\in D$, $g\in G$, we have 
\begin{gather*}
P(g.x-g.y)=\kappa(g,x)P(x-y){}^t\hspace{-1pt}\kappa(g,y)=\kappa(g,y)P(x-y){}^t\hspace{-1pt}\kappa(g,x), \\
P(g_c.x-g_c.y)=\kappa(g_c,x)P(x-y){}^t\hspace{-1pt}\kappa(g_c,y)=\kappa(g_c,y)P(x-y){}^t\hspace{-1pt}\kappa(g_c,x). 
\end{gather*}
\item $(D\times D)^\times, (T_\Omega\times T_\Omega)^\times$ are stable under the diagonal actions of $G, {}^c\hspace{-1pt}G$ respectively. 
\end{enumerate}
\end{lemma}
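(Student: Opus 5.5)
The plan is to check the identity on generators of $G^\BC$ and then pass to products using the cocycle condition for $\kappa$. Concretely, I will show that for $g$ in each of $\exp(\fp^+)$, $K^\BC$, $\exp(\fp^-)$ (and $J$),
\[ P(g.x-g.y)=\kappa(g,x)P(x-y){}^t\hspace{-1pt}\kappa(g,y) \]
holds as an identity of rational maps in $(x,y)$, wherever both sides are defined.

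First I verify the generators.

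\textbf{Translations.} For $g=\exp((u,0,0))$ we have $\kappa=I$, and the identity is trivial.

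\textbf{Elements $k\in K^\BC$.} Here $k.x=kx$ and $\kappa(k,x)=k$. The identity becomes $P(kx-ky)=kP(x-y){}^t\hspace{-1pt}k$. This is the standard equivariance of the quadratic representation under the structure group. It follows from the definition of $\fk^\BC$, which says $P(lx)=lP(x)\,{}^t\hspace{-1pt}l$ infinitesimally, i.e. $[l,P(x)]$ matches $D$-compatibility. Equivalently, it follows from \cite[Proposition VIII.2.8]{FK}.

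\textbf{Elements $\exp((0,0,v))$.} Here $x\mapsto x^{-v}$ and $\Ad\kappa=B(x,-v)^{-1}$. I need the Jordan identity
\[ P(x^{v}-y^{v})=B(x,v)^{-1}P(x-y)B(y,v)^{-1}{}^{t}, \]
together with ${}^tB(y,v)=B(v,y)$. Rather than grinding this out, I will use $J$ in its place: $G^\BC$ is generated by $\exp(\fp^+)$, $K^\BC$, and $J$, since $\exp((0,0,v))=J\exp((-v,0,0))J^{-1}$ up to signs.

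\textbf{The inversion $J$.} We have $J.x=-x^{-1}$ and $\kappa(J,x)=P(x)^{-1}$. The required identity is
\[ P(x^{-1}-y^{-1})=P(x)^{-1}P(x-y)P(y)^{-1}. \]
This is the Hua-type identity $x^{-1}-y^{-1}=P(x)^{-1}$-twisted; more precisely, it follows from $x^{-1}-y^{-1}=P(x^{-1})(y-x)\circ$… The cleanest route is the fundamental formula $P(P(a)b)=P(a)P(b)P(a)$ combined with the Hua identity $x^{-1}-(x-y)^{-1}\ldots$. I expect this step to be the main technical point. I would cite the known formula $P(x^{-1}-y^{-1})=P(x)^{-1}P(y-x)P(y)^{-1}$ (e.g. \cite{L0}, \cite{L}), first verifying it for invertible $x,y$ with $x-y$ invertible, using
\[ x^{-1}-y^{-1}=P(x)^{-1}\bigl(P(x)y^{-1}\ldots\bigr), \]
and then extending by density and continuity. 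Note $P(x-y)=P(y-x)$.

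\textbf{Composition.} The cocycle condition gives
\[ \kappa(g_1g_2,x)P(x-y){}^t\hspace{-1pt}\kappa(g_1g_2,y)=\kappa(g_1,g_2.x)\bigl[\kappa(g_2,x)P(x-y){}^t\hspace{-1pt}\kappa(g_2,y)\bigr]{}^t\hspace{-1pt}\kappa(g_1,g_2.y), \]
so the identity propagates to all of $G^\BC$ on the dense open set where everything is defined.

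\textbf{The second equality.} The version with $x$ and $y$ swapped follows from the first applied to $(y,x)$ together with $P(y-x)=P(x-y)$.

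\textbf{Restriction to the groups in question.} For $g\in{}^c\hspace{-1pt}G$ on $T_\Omega$, for $g\in G$ on $D$, and for $g_c^{\pm1}$, the maps are holomorphic on the respective domains. Hence the rational identity holds everywhere there by analytic continuation. This proves (1) and (2).

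\textbf{Part (3).} Take determinants in (2). Using $\det(x)^{2n/r}=\Det P(x)$, we get
\[ \det(g.x-g.y)^{2n/r}=\Det\kappa(g,x)\,\det(x-y)^{2n/r}\,\Det\kappa(g,y). \]
Since $\kappa(g,\cdot)$ is invertible on $D$ (respectively on $T_\Omega$), $\det(x-y)\neq0$ implies $\det(g.x-g.y)\neq0$. This gives the stability of $(D\times D)^\times$ and $(T_\Omega\times T_\Omega)^\times$.
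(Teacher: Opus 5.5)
Your proof is correct and follows essentially the paper's route: verify the identity on generators (translations, the structure group, and $J$ via the identity $P(x^{-1}-y^{-1})=P(x)^{-1}P(x-y)P(y)^{-1}$, which the paper cites from Faraut--Kor\'anyi), propagate it with the cocycle relation, and take determinants for (3). The only difference is bookkeeping. The paper uses the generators $N^+\cup L\cup\{J\}$ of ${}^c\hspace{-1pt}G$, so all intermediate points stay in $T_\Omega$, handles $g_c^{-1}$ through the factorization (\ref{formula_Cayley_decomp}), and gets (2) from ${}^c\hspace{-1pt}G=\Int(g_c)G$. You instead prove a rational identity on all of $G^\BC$ and restrict to the domains by analytic continuation, which covers every case at once.
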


\begin{proof}
(1) For the former equalities, since ${}^c\hspace{-1pt}G$ is generated by $N^+\cup L\cup\{J\}$, it is enough to prove when $g\in N^+, L$, and when $g=J=\exp\bigl(\frac{\pi}{2}(-e,0,e)\bigr)$. 
When $g=\exp((u,0,0))\in N^+$ with $u\in\fn^+$, we have $g.x=x+u$, $\kappa(g,x)=I$, and 
\[ P(g.x-g.y)=P((x+u)-(y+u))=P(x-y). \]
When $g=l\in L$, we have $\kappa(l,x)=l$, and 
\[ P(l.x-l.y)=P(l.(x-y))=lP(x-y){}^t\hspace{-1pt}l. \] 
When $g=J$, we have $J.x=-x^{-1}$, $\kappa(J,x)=P(x)^{-1}$, and by \cite[Lemma X.4.4\,(i)]{FK}, 
\begin{align*}
P(J.x-J.y)=P(-x^{-1}+y^{-1})&=P(x)^{-1}P(x-y)P(y)^{-1}=P(x)^{-1}P(x-y){}^t\hspace{-1pt}P(y)^{-1} \\
&=P(y)^{-1}P(x-y)P(x)^{-1}=P(y)^{-1}P(x-y){}^t\hspace{-1pt}P(x)^{-1}. 
\end{align*}
Hence we get the desired formulas. For the latter formulas, since 
\begin{equation}\label{formula_Cayley_decomp}
g_c^{-1}=\exp\biggl(-\frac{\pi\sqrt{-1}}{4}(e,0,e)\biggr)=\exp((\sqrt{-1}e,0,0))P(\sqrt{-2}e)J\exp((\sqrt{-1}e,0,0)), 
\end{equation}
and since 
\[ J\exp((\sqrt{-1}e,0,0)).x=-(x+\sqrt{-1}e)^{-1} \]
is well-defined for all $x\in T_\Omega$, this is proved similarly to the former formulas. 

(2) This follows from (1) and ${}^c\hspace{-1pt}G=\Int(g_c)G$, $\kappa(g_c,x)=\kappa(g_c^{-1},g_c.x)^{-1}$. 

(3) Since $\det(x)=\chi(P(x))$, this is clear from (1), (2). 
\end{proof}

For later use, we also prove the following. 

\begin{lemma}\label{lem_cov_inv}
\begin{enumerate}
\item For $x,y,w\in\fp^+$, if 
\begin{equation}\label{formula_ass_inv}
\det(x-y),\;\det(w-y),\;\det(x-w)\ne 0, 
\end{equation}
then we have 
\[ \det((w-y)^{-1}+(x-w)^{-1})\ne 0. \]
\item If $x,y,w\in T_\Omega$ satisfy (\ref{formula_ass_inv}), then for $g\in {}^c\hspace{-1pt}G$, we have 
\begin{gather*}
((g.w-g.y)^{-1}+(g.x-g.w)^{-1})^{-1}=\kappa(g,w).((w-y)^{-1}+(x-w)^{-1})^{-1}, \\ 
((g_c^{-1}.w\hspace{-1pt}-\hspace{-1pt}g_c^{-1}.y)^{-1}+(g_c^{-1}.x\hspace{-1pt}-\hspace{-1pt}g_c^{-1}.w)^{-1})^{-1}=\kappa(g_c^{-1},w).((w\hspace{-1pt}-\hspace{-1pt}y)^{-1}+(x\hspace{-1pt}-\hspace{-1pt}w)^{-1})^{-1}.  
\end{gather*}
\item If $x,y,w\in D$ satisfy (\ref{formula_ass_inv}), then for $g\in G$, we have 
\begin{gather*}
((g.w-g.y)^{-1}+(g.x-g.w)^{-1})^{-1}=\kappa(g,w).((w-y)^{-1}+(x-w)^{-1})^{-1}, \\ 
((g_c.w-g_c.y)^{-1}+(g_c.x-g_c.w)^{-1})^{-1}=\kappa(g_c,w).((w-y)^{-1}+(x-w)^{-1})^{-1}.  
\end{gather*}
\end{enumerate}
\end{lemma}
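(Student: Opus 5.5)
For (1), the plan is to use the Hua-type identity in a Jordan algebra: for invertible $a,b$ with $a+b$ invertible one has $a^{-1}+b^{-1}=P(a^{-1})\ldots$; more concretely, I would use the identity
\[ (w-y)^{-1}+(x-w)^{-1}=P\bigl((w-y)^{-1}\bigr)\bigl(\ldots\bigr), \]
derived from the formula $P(a^{-1}-b^{-1})$ appearing in the proof of Lemma \ref{lem_cov_det}, namely \cite[Lemma X.4.4\,(i)]{FK}: $P(a^{-1}-b^{-1})=P(a)^{-1}P(a-b)P(b)^{-1}$. Applying it with $a=w-y$, $b=-(x-w)=w-x$ gives $a^{-1}-b^{-1}=(w-y)^{-1}+(x-w)^{-1}$ and $a-b=x-y$. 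Hence
\[ P\bigl((w-y)^{-1}+(x-w)^{-1}\bigr)=P(w-y)^{-1}P(x-y)P(w-x)^{-1}. \]
Taking $\Det$ and using $\det(z)^{2n/r}=\Det(P(z))$, we get $\det((w-y)^{-1}+(x-w)^{-1})^{2n/r}=\det(x-y)^{2n/r}\det(w-y)^{-2n/r}\det(w-x)^{-2n/r}\ne 0$ under (\ref{formula_ass_inv}). Since the FK identity is polynomial/rational, it holds on the Zariski-open set where all inverses exist, so this is valid for all $x,y,w\in\fp^+$ satisfying (\ref{formula_ass_inv}).

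For (2), I would again reduce to generators $N^+$, $L$, $J$ of ${}^c\hspace{-1pt}G$, noting both sides behave as cocycles. Indeed, set $\Phi(x,y,w):=((w-y)^{-1}+(x-w)^{-1})^{-1}$; if the identity $\Phi(g.x,g.y,g.w)=\kappa(g,w).\Phi(x,y,w)$ holds for $g_1,g_2$, then by the cocycle property $\kappa(g_1g_2,w)=\kappa(g_1,g_2.w)\kappa(g_2,w)$ it holds for $g_1g_2$ (here $\kappa(g,w).$ is the linear action of $K^\BC$). For translations $\Phi$ is invariant and $\kappa=I$. For $l\in L$, $l$ acts as a Jordan algebra automorphism up to the structure group, and $(lz)^{-1}={}^t l^{-1}z^{-1}$; so $\Phi(lx,ly,lw)=({}^tl^{-1}((w-y)^{-1}+(x-w)^{-1}))^{-1}=l\,\Phi(x,y,w)$. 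The main computation is $g=J$: with $J.z=-z^{-1}$ we need
\[ \bigl((y^{-1}-w^{-1})^{-1}+(w^{-1}-x^{-1})^{-1}\bigr)^{-1}=P(w)^{-1}\Phi(x,y,w). \]
I would use the Hua identity $(y^{-1}-w^{-1})^{-1}=P(w)\ldots$; precisely, $(a^{-1}-b^{-1})^{-1}=a-P(a)(a-b)^{-1}$-type identities, or more cleanly $(y^{-1}-w^{-1})^{-1}=-w-P(w)(y-w)^{-1}$ (Hua's identity). Summing two such expressions, the $-w$ terms combine with the $+w$ from the second ($(w^{-1}-x^{-1})^{-1}=w+P(w)(x-w)^{-1}$), leaving $P(w)\bigl((w-y)^{-1}+(x-w)^{-1}\bigr)$, whose inverse is $P(w)^{-1}\Phi$ by $(P(w)z)^{-1}=P(w)^{-1}z^{-1}$. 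This verification of the sign conventions in Hua's identity is the step I expect to need the most care. The $g_c^{-1}$ formula follows from the decomposition (\ref{formula_Cayley_decomp}) and the fact that $J\exp((\sqrt{-1}e,0,0))$ is defined on all of $T_\Omega$, exactly as in Lemma \ref{lem_cov_det}; well-definedness of each intermediate inverse follows from part (1) together with Lemma \ref{lem_cov_det} (1), which preserves condition (\ref{formula_ass_inv}).

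For (3), I would transport via the Cayley transform as in Lemma \ref{lem_cov_det} (2): the $g_c$ formula follows from the $g_c^{-1}$ one by applying it at $g_c.x,g_c.y,g_c.w$ and using $\kappa(g_c,w)=\kappa(g_c^{-1},g_c.w)^{-1}$, and then for $g\in G$ write $g=g_c^{-1}g'g_c$ with $g'\in{}^c\hspace{-1pt}G$ and combine via the cocycle property.
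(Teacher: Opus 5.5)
Your proposal is correct and follows essentially the same route as the paper. Part (1) uses Faraut--Kor\'anyi's Lemma X.4.4\,(i) and then takes determinants. Part (2) reduces to the generators $N^+$, $L$, $J$ and handles $g_c^{-1}$ via the decomposition (\ref{formula_Cayley_decomp}). Part (3) transports everything through the Cayley transform using the cocycle property. The only cosmetic difference is in the $J$ case: you invoke Hua's identity $(y^{-1}-w^{-1})^{-1}=-w-P(w)(y-w)^{-1}$, whose signs are correct, whereas the paper carries out the same cancellation of the $\pm w$ terms directly after normalizing $w$ to $e$ via $P(w^{-1/2})$.
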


\begin{proof}
(1) By \cite[Lemma X.4.4\,(i)]{FK}, we have 
\begin{align*}
P((w-y)^{-1}+(x-w)^{-1})&=P(w-y)^{-1}P((w-y)+(x-w))P(x-w)^{-1} \\
&=P(w-y)^{-1}P(x-y)P(x-w)^{-1}, 
\end{align*}
and hence we get the desired formula. 

(2) For the former formulas, again it is enough to prove when $g\in N^+, L$, and when $g=J$. When $g=\exp((u,0,0))\in N^+$ with $u\in\fn^+$, we have $g.x=x+u$, $\kappa(g,x)=I$, and 
\begin{align*}
((g.w-g.y)^{-1}+(g.x-g.w)^{-1})^{-1}&=(((w+u)-(y+u))^{-1}+((x+u)-(w+u))^{-1})^{-1} \\&=((w-y)^{-1}+(x-w)^{-1})^{-1}. 
\end{align*}
When $g=l\in L$, we have $\kappa(l,x)=l$, and 
\begin{align*}
((l.w-l.y)^{-1}+(l.x-l.w)^{-1})^{-1}&=({}^t\hspace{-1pt}l^{-1}.(w-y)^{-1}+{}^t\hspace{-1pt}l^{-1}.(x-w)^{-1})^{-1} \\&=l.((w-y)^{-1}+(x-w)^{-1})^{-1}. 
\end{align*}
When $g=J$, we have $J.x=-x^{-1}$, $\kappa(J,x)=P(x)^{-1}$. 
By putting $P(w^{-1/2})x=:x'$, $P(w^{-1/2})y=:y'$, we have 
\begin{align*}
&((J.w-J.y)^{-1}+(J.x-J.w)^{-1})^{-1}=((-w^{-1}+y^{-1})^{-1}+(-x^{-1}+w^{-1}))^{-1} \\
&=\bigl((P(w^{-1/2})(-e+y^{\prime-1}))^{-1}+(P(w^{-1/2})(-x^{\prime-1}+e))^{-1}\bigr)^{-1} \\
&=P(w^{-1/2})\bigl((-e+y^{\prime-1})^{-1}+(-x^{\prime-1}+e)^{-1}\bigr)^{-1} \\
&=P(w^{-1/2})\bigl((e-y')^{-1}-e+(x'-e)^{-1}+e\bigr)^{-1} \\
&=P(w^{-1/2})\bigl((P(w^{-1/2})(w-y))^{-1}+(P(w^{-1/2})(x-w))^{-1}\bigr)^{-1} \\
&=P(w^{-1/2})^2\bigl((w-y)^{-1}+(x-w)^{-1}\bigr)^{-1}=P(w)^{-1}\bigl((w-y)^{-1}+(x-w)^{-1}\bigr)^{-1}. 
\end{align*}
The latter formulas follow from (\ref{formula_Cayley_decomp}) and an argument similar to the above. 

(3) Clear from (2) and ${}^c\hspace{-1pt}G=\Int(g_c)G$. 
\end{proof}

Next, for $(x,y)\in (D\times D)^\times$, we take $g\in G$ such that $g^{-1}.y=0$, $g^{-1}.x\in\Omega$, 
and define the totally real submanifold $C(x,y)\subset D$ by 
\begin{subequations}\label{formula_contour}
\begin{equation}
C(x,y)=C(x,y;g):=g.(\Omega\cap (g^{-1}.x-\Omega)). 
\end{equation}
Similarly, for $(x,y)\in (T_\Omega\times T_\Omega)^\times$, we define $C(x,y)\subset T_\Omega$ by 
\begin{equation}
C(x,y):=g_c.C(g_c^{-1}.x,g_c^{-1}.y). 
\end{equation}
\end{subequations}
When $\fp^+=\BC$, these coincide with the geodesic intervals connecting $x$ and $y$ with respect to the Poincar\'e metrics on $D$ and $T_\Omega$. 

\begin{proposition}\label{prop_welldef_contour}
For $(x,y)\in (D\times D)^\times$, $C(x,y;g)$ does not depend on the choice of $g\in G$ satisfying $g^{-1}.y=0$, $g^{-1}.x\in\Omega$. 
\end{proposition}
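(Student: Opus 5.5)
Suppose $g_1,g_2\in G$ both satisfy $g_i^{-1}.y=0$ and $g_i^{-1}.x\in\Omega$. Set $h:=g_1^{-1}g_2\in G$. Then $h.0=0$, so $h\in K$ (the stabilizer of $0\in D$). Write $a_i:=g_i^{-1}.x\in\Omega\cap D$. Then $h.a_2=a_1$, and since $h\in K$ acts linearly on $\fp^+$ by $h.z=hz$, we have $ha_2=a_1$. We need
\[ C(x,y;g_2)=g_1h.(\Omega\cap(a_2-\Omega))=g_1.\bigl(h(\Omega\cap(a_2-\Omega))\bigr), \]
and it therefore suffices to show $h(\Omega\cap(a_2-\Omega))=\Omega\cap(a_1-\Omega)$. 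Since $h$ is linear, the right-hand side of this claim would follow from $h\Omega=\Omega$: then $h(a_2-\Omega)=a_1-h\Omega=a_1-\Omega$. So the whole statement reduces to showing that $h$ preserves $\Omega$.

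To prove $h\Omega=\Omega$, I will use the structure of $K$-orbits through elements of $\Omega$. Every $a\in\Omega\cap D$ can be written as $a=k\sum_j t_je_j$ with $k\in K_L$ and $1>t_1\ge\cdots\ge t_r>0$. Here the $t_j$ are the singular values of $a$, which are $K$-invariant, so these diagonal parts coincide for $a_1$ and $a_2$. Replacing $h$ by $k_1^{-1}hk_2$, where $k_1,k_2\in K_L$ preserve $\Omega$ (they lie in $L$), we may assume $a_1=a_2=a:=\sum_jt_je_j$, and hence $h\in\Stab_K(a)$.

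The main step is to show $\Stab_K(a)\subset K_L$ for $a=\sum_jt_je_j$ with all $t_j>0$. Once this holds, $h\in K_L\subset L$ preserves $\Omega$. I would first try the following argument. For $k\in K$ we have $k^{-1}=k^*$, i.e.\ $\overline{k}={}^t\hspace{-1pt}k^{-1}$, and $D(kz,\overline{kz})=kD(z,\overline z)k^{-1}$. Therefore $k$ commutes with $D(a,\overline a)=D(a,a)$, so it preserves its eigenspaces, and in particular the Peirce decomposition determined by the $t_j$. The cleaner route is to use the polar decomposition: $P(a)$ is positive definite self-adjoint on $\fn^+$, and $kP(a){}^t\hspace{-1pt}k=P(ka)=P(a)$. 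Combined with $\chi$ and the quadratic representation, this should force ${}^t\hspace{-1pt}k$ to be related to $k^{-1}$, i.e.\ $k\in K_\fl^\BC\cap K=K_L$ up to components. In detail, from $P(a)={}^t\hspace{-1pt}k^{-1}\cdots$ I would take the square root $P(a^{1/2})$ and use uniqueness of positive square roots in the group $\{P(x)\}$. An alternative is to apply the Cartan decomposition of $L$ and the fact that $\Stab_{K^\BC}(e)=K_L^\BC$: the elements $k$ and $P(a^{-1/2})kP(a^{1/2})$ are related, and the latter fixes $e$.

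I expect this identification $\Stab_K(a)\subset L$, or at least that $\Stab_K(a)$ preserves $\Omega$, to be the main obstacle, specifically handling the possibility that $h$ maps $\Omega$ to $-\Omega$ or lies in a disconnected component. The sign issue is excluded because $ha=a\in\Omega$, and $\Omega\cap-\Omega=\emptyset$ (recall from \cite[Proposition VIII.2.8]{FK} that real elements of $\Ad(K^\BC)$ lie in $\pm\Ad(L)$). So it suffices to show that $h$ is real, i.e.\ $\overline{h}=h$, on $\fp^+$. I would derive this from $h$ fixing a point $a$ in the open set $\Omega\cap D$ of the real form, using invariance of the Jordan-theoretic data under $K$ at such a generic point.
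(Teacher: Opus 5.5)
Your reduction is correct and matches the paper's: $h=g_1^{-1}g_2$ lies in $K$, and the statement follows once $h\Omega=\Omega$. The paper gets this from Lemma~\ref{lem_KL}: if $k\in K$, $x\in\Omega$ and $k.x\in\Omega$, then $k\in K_L$. That lemma is the only non-formal step, and you do not prove it. You list several possible routes and carry none of them through.

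The first route cannot work as stated. Commuting with $D(a,\overline{a})=2L(a^2)$ only means preserving the Peirce spaces. For example, $\exp(\sqrt{-1}\theta h_1)\in K$ commutes with $D(a,\overline{a})$ for every $a=\sum_jt_je_j$, yet it acts on $\BC e_1$ by $e^{2\sqrt{-1}\theta}$, so it is not real for generic $\theta$. Your last sentence, deriving reality from ``invariance of the Jordan-theoretic data at a generic point'', is not an argument either.

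The missing step is to combine two facts you already wrote down.
\begin{itemize}
\item For $h\in K$ you have ${}^t\hspace{-1pt}h=\overline{h}^{-1}$. So $P(a_1)=P(ha_2)=hP(a_2){}^t\hspace{-1pt}h$ becomes $hP(a_2)=P(a_1)\overline{h}$.
\item $P(a_1)$ and $P(a_2)$ are real, symmetric and positive definite on $\fn^+$. Take eigenvectors $v,w\in\fn^+$ of $P(a_2)$ and $P(a_1)$ with eigenvalues $\xi,\eta>0$. Then $\xi(hv|w)=\eta(\overline{h}v|w)=\eta\overline{(hv|w)}$.
\item Hence $(hv|w)$ is real, and it vanishes unless $\xi=\eta$. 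So $h$ preserves $\fn^+$, i.e.\ $h=\overline{h}$. This is the paper's argument.
\end{itemize}

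In your normalized case $a_1=a_2=a$ there is an alternative. Take adjoints for $(\cdot|\overline{\cdot})$ to get $\overline{h}P(a)=P(a)h$. Then $h$ commutes with $P(a)^2$, hence with its positive square root $P(a)$. This gives $P(a)h=P(a)\overline{h}$, so $h=\overline{h}$.

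Only after this does your appeal to \cite[Proposition VIII.2.8]{FK} finish the proof: it gives $h\in L\cup(-L)$, and $ha_2\in\Omega$ rules out $-L$. So $h\in K_L$ and $h\Omega=\Omega$. Since the eigenvector argument works directly for $ha_2=a_1$, your preliminary reduction to $a=\sum_jt_je_j$ is not needed.
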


To prove this, we need the following lemma. 
\begin{lemma}\label{lem_KL}
Let $k\in K$, $x\in\Omega$. If $k.x\in\Omega$, then $k\in K_L$. 
\end{lemma}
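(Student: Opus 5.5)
The plan is to use the fact that $K$ acts on $\fp^+$ linearly and preserves the $K$-invariant Hermitian form $(\cdot|\overline{\cdot})$. Elements of $K_L$ are exactly those that additionally commute with the conjugation $\overline{\cdot}$ with respect to $\fn^+$. Indeed, $\fk_\fl=\fk\cap\fl$, and $K_L=K\cap L$ is connected modulo components fixing $\Omega$. We need to show that $k$ preserves $\fn^+$ and maps $\Omega$ into itself; then $\Ad(k)|_{\fp^+}\in\Ad(K^\BC)|_{\fp^+}\cap GL_\BR(\fn^+)$, so by the cited fact \cite[Proposition VIII.2.8]{FK} it lies in $\Ad(L)\cup-\Ad(L)$, and since $k.x\in\Omega$ the sign is $+$. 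Hence $k\in L\cap K=K_L$ (up to the kernel of $\Ad$, which is central and contained in $K_L$).

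The key step is to show $k(\fn^+)=\fn^+$ from the single condition $x,kx\in\Omega$. First I would reduce to $x=e$. Since $L$ acts transitively on $\Omega$, I can write $x=l_1e$ and $kx=l_2e$, but $l_i\notin K$ in general, so this does not immediately reduce. Instead I would use the polar decomposition: $x=k_1\sum a_je_j$ with $k_1\in K_L$ and $a_j>0$. The same holds for $kx$, namely $kx=k_2\sum b_je_j$ with $k_2\in K_L$. Because $K$ preserves the spectral norm and the whole spectrum (the $K$-orbit of $x$ determines its singular values), we get $\{b_j\}=\{a_j\}$. After reordering by an element of $K_L$, we may assume $k':=k_2^{-1}kk_1$ fixes $a:=\sum a_je_j$.

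It then remains to show that the stabilizer $Z_K(a)$ of $a$ with all $a_j>0$ lies in $K_L$. For this I would use the Lie algebra side: $Z_\fk(a)$ consists of $l\in\fk$ with $la=0$. Write $l=l_1+\sqrt{-1}l_2$ with $l_1\in\fk_\fl$ and $\sqrt{-1}l_2\in\fk\cap\sqrt{-1}\fl$, where $l_2\in\fl$ is symmetric with respect to the trace form. Since $a$ is real, taking real and imaginary parts of $la=0$ gives $l_1a=0$ and $l_2a=0$. For symmetric $l_2\in\fl$, which is of the form $L(u)+$ (derivation), the condition $l_2a=0$ with $a$ invertible and $l_2$ commuting with $P$ forces $l_2=0$. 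Concretely, an element of $\fl\cap\sqrt{-1}\fk$ is $L(u)$ with $u\in\fn^+$, and $L(u)a=0$ gives $u\circ a=0$. Pairing with $u$ via the trace, $\tr(P(a^{1/2})u\circ u)$-type positivity, or equivalently $(u\circ a|u)=\tr(L(a)u\,u)>0$ for $a\in\Omega$, $u\neq0$, yields $u=0$. Thus $Z_\fk(a)\subset\fk_\fl$.

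The main obstacle is the passage from the Lie algebra to the group, since the stabilizer $Z_K(a)$ might be disconnected. To handle this I would instead argue directly at the group level using the Cartan decomposition $K^\BC\supset K=K_L\exp(\sqrt{-1}(\fk\cap\sqrt{-1}\fl))$, or compute with the unitary trick: for $k\in K$, consider $m:=\overline{k}^{-1}k$. This commutes appropriately and fixes $a$, and it is of the form $\exp(L(u))$ with $u\in\fn^+$. Then $\exp(L(u))a=a$ together with positivity of $L(a)$ forces $u=0$ by the same pairing argument applied to the injective map $u\mapsto$ the derivative. Hence $\overline{k}=k$, so $k$ preserves $\fn^+$, and the first paragraph concludes the argument.
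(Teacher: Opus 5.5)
Your outer frame (prove $\overline{k}=k$, then apply \cite[Proposition VIII.2.8]{FK} and use $k.x\in\Omega$ to exclude $-L$) is the same as the paper's. Your reduction to $ka=a$ via the polar decomposition and your computation $Z_\fk(a)\subset\fk_\fl$ are also fine.

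The gap is at the Lie-algebra-to-group step you flagged, and the proposed fix does not work. For $k\in K$ one has $\overline{k}^{-1}={}^t\hspace{-1pt}k$, so $m=\overline{k}^{-1}k$ is unitary. It is not of the form $\exp(L(u))$ with $u\in\fn^+$. Such an element is positive definite on $\fn^+$, and a unitary one is $I$ automatically, so that claim already contains the conclusion. What $\fk=\fk_\fl\oplus\sqrt{-1}L(\fn^+)$ actually gives is $m=\exp(\sqrt{-1}L(u))$ with $u\in\fn^+$. For this unitary operator, $ma=a$ only says that $a$ lies in the sum of the eigenspaces of $L(u)$ with eigenvalues in $2\pi\BZ$. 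It does not give $L(u)a=0$, so positivity of $L(a)$ cannot be invoked.

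The implication you need is in fact false for general elements of this form. Take $r\ge 2$, $d>0$ and $c=e_1$. Then $m=\exp(2\pi\sqrt{-1}L(c))\in K$ acts by $1,-1,1$ on the eigenspaces of $L(c)$ for the eigenvalues $1,\frac12,0$. Hence $m.e=e$, but $m\ne I$. Your argument uses nothing about $m$ beyond this exponential form and $ma=a$, so it cannot conclude $m=I$, i.e.\ $\overline{k}=k$.

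The missing ingredient is a group-level identity relating $k$ and $\overline{k}$, which is exactly what the paper uses. From ${}^t\hspace{-1pt}k\,\overline{k}=I$ and $P(k.x)=kP(x){}^t\hspace{-1pt}k$ one gets $kP(x)=P(y)\overline{k}$ with $y=k.x$. Since $x,y\in\Omega$, both $P(x)$ and $P(y)$ are positive definite on $\fn^+$. For real eigenvectors $v,w$ of $P(x),P(y)$ with eigenvalues $\xi,\eta>0$, this yields $\xi(kv|w)=\eta(\overline{k}v|w)=\eta\overline{(kv|w)}$. So $(kv|w)$ is real when $\xi=\eta$ and zero otherwise, hence $k=\overline{k}$.

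This needs neither the polar reduction nor the Lie-algebra analysis. In your reduced setting it is just the case $x=y=a$, i.e.\ $kP(a)=P(a)\overline{k}$, and using it there would repair your argument.
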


\begin{proof}
Let $y:=k.x\in\Omega$. Then we have 
\[ kP(x)=kP(x){}^t\hspace{-1pt}k\overline{k}=P(k.x)\overline{k}=P(y)\overline{k}\in\End_\BC(\fp^+) \]
(under the identification of $k\in K$ and $\Ad(k)|_{\fp^+}\in GL_\BC(\fp^+)$). 
Since $P(x), P(y)\in\End_\BR(\fn^+)\subset\End_\BC(\fp^+)$ are positive definite, these are diagonalizable with positive real eigenvalues. If $v, w\in\fn^+$ are eigenvectors of $P(x), P(y)$ with the eigenvalues $\xi,\eta>0$ respectively, then we have 
\[ \xi(kv|w)=(kP(x)v|w)=(P(y)\overline{k}v|w)=(\overline{k}v|P(y)w)=\eta(\overline{k}v|w). \]
Hence $\xi=\eta$ implies $(kv|w)=(\overline{k}v|w)$. Similarly, since $|(kv|w)|=|(\overline{k}v|w)|$ holds, 
$\xi\ne\eta$ also implies $(kv|w)=(\overline{k}v|w)=0$. Since this holds for all eigenvectors in the real form $\fn^+\subset\fp^+$, 
we get $k=\overline{k}\in K^\BC\cap\End_\BR(\fn^+)=L\cup(-L)$ by \cite[Proposition VIII.2.8]{FK}, 
where an element of $-L$ sends $\Omega$ to $-\Omega$. Since $y=k.x\in\Omega$, we have $k\in L\cap K=K_L$. 
\end{proof}

\begin{proof}[Proof of Proposition \ref{prop_welldef_contour}]
Let $(x,y)\in (D\times D)^\times$. Suppose $g_1,g_2\in G$, $u_1,u_2\in \Omega$ satisfy 
\[ g_1^{-1}.y=g_2^{-1}.y=0, \qquad g_1^{-1}.x=u_1, \qquad g_2^{-1}.x=u_2. \]
Let $k:=g_2^{-1}g_1$. Then since $k.0=0$, we have $k\in\Stab_G(0)=K$, and since $k.u_1=u_2$, by Lemma \ref{lem_KL}, we have $k\in K_L$. 
Since $k$ stabilizes $\Omega$, we have 
\begin{align*}
C(x,y;g_1)&=g_1.(\Omega\cap (u_1-\Omega))=g_2k.(\Omega\cap (u_1-\Omega)) \\
&=g_2.(\Omega\cap (k.u_1-\Omega))=g_2.(\Omega\cap (u_2-\Omega))=C(x,y;g_2). \qedhere 
\end{align*}
\end{proof}

By the definition of $C(x,y)$, for $g\in G$, $(x,y)\in (D\times D)^\times$, we have 
\[ C(g.x,g.y)=g.C(x,y). \]

Next, for $x,y\in D$, let $\dist(x,y)$ be the distance with respect to the $G$-invariant Hermitian metric 
\[ g_x(u,\overline{v}):=D_u\overline{D_v}\log h(x,\overline{x})^{-1}=(B(x,\overline{x})^{-1}u|\overline{v}) \qquad (x\in D,\;u,\overline{v}\in T_xD=\fp^+), \]
where $D_uf(x):=\frac{1}{2}\frac{d}{dt}\bigl|_{t=0}(f(x+tu)-\sqrt{-1}f(x+t\sqrt{-1}u))$ and 
$\overline{D_v}f(x):=\frac{1}{2}\frac{d}{dt}\bigl|_{t=0}(f(x+t\overline{v})+\sqrt{-1}f(x+t\sqrt{-1}\overline{v}))$ are the holomorphic and anti-holomorphic directional derivatives. 
Especially, for $u\in \Omega$, the curve $t\mapsto\exp(t(u,0,u)).0=\tanh(tu)$ is a geodesic connecting $0$ and $\tanh(u)$, and we have 
\[ \dist(\tanh(u),0)^2=(u|u)=\tr(u^2). \]
Also, for $x\in D$, $r\ge 0$, let $B_x(r):=\{y\in D\mid \dist(x,y)\le r\}\subset D$ be the closed ball with the center $x$, radius $r$. 
Then $C(x,y)$ is bounded as follows. 

\begin{lemma}\label{lem_bdd}
For $(x,y)\in (D\times D)^\times$, we have $C(x,y)\subset B_y(\dist(x,y))$. 
\end{lemma}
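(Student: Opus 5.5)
By $G$-invariance of $\dist$ and the equivariance $C(g.x,g.y)=g.C(x,y)$, I will reduce to the case $y=0$ and $x=u\in\Omega\cap D$. We pick $g\in G$ with $g^{-1}.y=0$ and $g^{-1}.x\in\Omega$, which exists by the definition of $C(x,y)$, and note $\dist(g.a,g.b)=\dist(a,b)$. The claim then becomes
\[ \Omega\cap(u-\Omega)\subset B_0(\dist(u,0)). \]
Here $B_0(r)$ is $K$-invariant, and $\dist(w,0)$ depends only on the spectral values of $w$.

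Next I express $\dist(w,0)$ explicitly. Any $w\in D$ can be written $w=k.\tanh(a)$ with $k\in K$ and $a=\sum a_je_j$, $a_j\ge 0$. By the geodesic formula stated before the lemma, $\dist(w,0)^2=\sum_j \artanh(s_j)^2$, where $s_j\in[0,1)$ are the singular (spectral) values of $w$. For $w\in\Omega\cap D$ these are simply the eigenvalues of $w$ in the Jordan algebra $\fn^+$. So for $w\in\Omega\cap(u-\Omega)$ I need to show
\[ \sum_j\artanh(\lambda_j(w))^2\le\sum_j\artanh(\lambda_j(u))^2, \]
where $\lambda_1\ge\cdots\ge\lambda_r$ denote eigenvalues.

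The key step is monotonicity. The condition $0<w<u$ in the cone order implies $\lambda_j(w)\le\lambda_j(u)$ for every $j$. This is the Jordan-algebra Weyl/Courant–Fischer monotonicity: $\lambda_j(w)=\max\min$ of $(w|c)$ over Jordan-frame type flags (see \cite{FK}). Alternatively, $u-w\in\Omega$ gives $\lambda_j(u)\ge\lambda_j(w)+\lambda_r(u-w)>\lambda_j(w)$ by the minimax characterization. Since $t\mapsto\artanh(t)^2$ is increasing on $[0,1)$ and all eigenvalues of $u\in D$ lie in $(0,1)$, the termwise inequality gives the claim. The closure points (boundary of $\Omega$) are harmless because $\Omega\cap(u-\Omega)$ is open and the ball is closed.

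The main obstacle is rigorously justifying the eigenvalue monotonicity in a general Euclidean Jordan algebra. I would first try the minimax principle $\lambda_j(x)=\max_{\dim V=j}\min_{v}$ applied to the self-adjoint operator $L(x)$ or $P(x)^{1/2}$, comparing via $P(u)\ge P(w)$. If that proves awkward, the fallback is Hirzebruch-type minimax over Jordan subalgebras from \cite{FK}. A weaker but sufficient route is to note that the distance formula only needs a Schur-convex increasing function of eigenvalues: $\dist(w,0)^2=\tr(\artanh(w)^2)$ by functional calculus. One then shows that $w\mapsto\tr(f(w))$ is monotone on $\Omega\cap D$ for increasing $f$, which follows from the differential $\frac{d}{dt}\tr f(w+tv)=(f'(w)|v)\ge0$ for $v\in\Omega$, since $f'(w)\in\overline{\Omega}$ and $(\Omega|\Omega)>0$. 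Integrating along the segment from $w$ to $u$ then finishes the proof, and this last argument is the one I would actually write down.
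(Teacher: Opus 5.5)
Your proof is correct. The reduction to $\Omega\cap(u-\Omega)\subset B_0(\dist(u,0))$ with $u\in\Omega\cap D$, and the formula $\dist(z,0)^2=\tr(\artanh(z)^2)$, are exactly as in the paper. The genuine difference is the monotonicity step.

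The paper expands $\tr(\artanh(z)^2)=\sum_{j,k}\tr(z^{2j+2k+2})/((2j+1)(2k+1))$. It then proves $\tr(u^p)>\tr(z^p)$ term by term through the chain $(u^{a+1}|z^b)>(u^a|z^{b+1})$, obtained by pairing $u-z\in\Omega$ against $u^a\circ z^b$.

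You instead differentiate $\tr f$, with $f=\artanh^2$, along the segment from $w$ to $u$. This segment stays in $\Omega\cap(e-\Omega)=\Omega\cap D$, so $f$ is defined on it. You use the gradient formula $\frac{d}{dt}\tr f(x+tv)=(f'(x+tv)|v)$, which via the power series reduces to $\frac{d}{dt}\tr((x+tv)^p)=p((x+tv)^{p-1}|v)$. Together with $f'(x)\in\overline{\Omega}$ and $(\overline{\Omega}|\Omega)\ge 0$, this gives the inequality.

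What your route buys:
\begin{itemize}
\item It only ever pairs $u-w\in\Omega$ with a function of a single element of $\Omega$, and such a function lies in $\overline{\Omega}$ by the spectral theorem.
\item It proves the general fact that $\tr f$ is monotone for the cone order whenever $f$ is increasing.
\end{itemize}

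The paper's chain, by contrast, treats $u^a\circ z^b$ as an element of $\Omega$ for $a,b\ge 1$, which is needed once $p\ge 3$. But $\Omega$ is not closed under the Jordan product. In $\Sym(2,\BR)$, take $u=\diag(0.9,0.009)$ and $z=10^{-3}\bigl(\begin{smallmatrix}1&0.99\\0.99&1\end{smallmatrix}\bigr)$. Then $u,z,u-z\in\Omega$ and $u\in D$, yet $\det(u\circ z)<0$. So that step needs a separate justification as written.

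Your argument with $f(t)=t^p$ supplies exactly the termwise inequality the paper needs:
\[
\tr(u^p)-\tr(z^p)=p\int_0^1\bigl((z+t(u-z))^{p-1}\big|u-z\bigr)\,dt>0 .
\]
The paper's version, in exchange, is phrased purely through trace identities for powers, with no differentiation.
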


\begin{proof}
By the $G$-invariance of $\dist$, it is enough to show, for $u\in \Omega\cap D$, 
\[ C(u,0)=\Omega\cap(u-\Omega)\subset B_0(\dist(u,0)). \]
Indeed, for $z\in \Omega\cap D$, we have 
\[ \dist(z,0)^2=\tr((\artanh(z))^2)=\tr\biggl(\biggl(\sum_{j=0}^\infty \frac{z^{2j+1}}{2j+1}\biggr)^2\biggr)=\sum_{j,k=0}^\infty \frac{\tr(z^{2j+2k+2})}{(2j+1)(2k+1)}. \]
Suppose $z\in C(u,0)=\Omega\cap(u-\Omega)$. Then since $(u|x)>(z|x)$ holds for any $x\in \Omega$, by putting $x=u^j\circ z^k$, we have 
\[ (u^{j+1}|z^k)=(u\circ u^j|z^k)=(u|u^j\circ z^k)>(z|u^j\circ z^k)=(u^j|z\circ z^k)=(u^j|z^{k+1}), \]
and inductively, we have 
\[ \tr(u^j)>(u^{j-1}|z)>(u^{j-2}|z^2)>\cdots>(u|z^{j-1})>\tr(z^j). \]
Hence we get 
\[ \dist(u,0)^2=\sum_{j,k=0}^\infty \frac{\tr(u^{2j+2k+2})}{(2j+1)(2k+1)}>\sum_{j,k=0}^\infty \frac{\tr(z^{2j+2k+2})}{(2j+1)(2k+1)}=\dist(z,0)^2, \]
and get the desired inclusion. 
\end{proof}

We end this section by proving the following. 

\begin{lemma}\label{lem_ori_contour}
\begin{enumerate}
\item If $x,y\in D\cap\fn^+$ and $x-y\in\Omega$, then we have 
\[ C(x,y)=(y+\Omega)\cap (x-\Omega). \]
\item For $x,y\in (D\times D)^\times$, we have 
\[ C(x,y)=C(y,x), \]
with the same orientation if $n=\dim\fp^+$ is even, and with the reversed orientation if $n=\dim\fp^+$ is odd. 
\end{enumerate}
\end{lemma}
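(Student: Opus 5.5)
The idea for both parts is to work entirely inside the real slice $D\cap\fn^+$ and to use order-preserving (or order-reversing) real elements of $G$. Write $z\le z'$ for $z'-z\in\overline{\Omega}$ and $z<z'$ for $z'-z\in\Omega$. Set
\[ G_\BR:=\{g\in G\mid g.(D\cap\fn^+)\subset D\cap\fn^+\}, \]
which contains $K_L$ and the transvections $\exp(t(a,0,a))$ with $a\in\fn^+$. For $z\in D\cap\fn^+$, the Jacobian $\Ad(\kappa(g,z))|_{\fp^+}$ of $g\in G_\BR$ preserves $\fn^+$. By \cite[Proposition VIII.2.8]{FK}, quoted in Section \ref{section_prelim}, it therefore lies in $\Ad(L)\cup-\Ad(L)$. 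The set $D\cap\fn^+=\{-e<z<e\}$ is convex, hence connected, so the sign is constant in $z$.

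\textbf{Key monotonicity step.} If the Jacobian lies in $\Ad(L)$ and $z<z'$ in $D\cap\fn^+$, then
\[ g.z'-g.z=\int_0^1\kappa(g,z+t(z'-z))(z'-z)\,dt\in\Omega , \]
because $L$ preserves $\Omega$ and $\Omega$ is a convex cone. So such $g$ is strictly order preserving on $D\cap\fn^+$. If instead the Jacobian lies in $-\Ad(L)$, the same computation shows that $g$ is order reversing. Applying the same argument to $g^{-1}$ gives equality of intervals:
\[ g.\bigl((y+\Omega)\cap(x-\Omega)\bigr)=(g.y+\Omega)\cap(g.x-\Omega) \]
in the preserving case, and the analogous statement with $x$ and $y$ swapped in the reversing case. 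Here one uses that an order interval between points of $D\cap\fn^+$ stays inside $D\cap\fn^+$, which is clear from $-e<y<z<x<e$. I expect this step, namely locating the Jacobians in $\pm\Ad(L)$ and carrying out the connectedness and sign bookkeeping, to be the main technical point.

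For (1), take the transvection $g_1=\exp((a,0,a))\in G_\BR$ with $a\in\fn^+$ chosen so that $g_1.0=y$. Its Jacobian at $0$ is the identity component, so $g_1$ is order preserving. Put $u:=g_1^{-1}.x\in D\cap\fn^+$. Monotonicity applied to $y<x$ gives $u>0$, i.e.\ $u\in\Omega$. Hence $g_1$ is admissible in the definition (\ref{formula_contour}), and by Proposition \ref{prop_welldef_contour} and the interval identity,
\[ C(x,y)=g_1.\bigl(\Omega\cap(u-\Omega)\bigr)=(y+\Omega)\cap(x-\Omega). \]

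For (2), choose $g\in G$ with $g^{-1}.y=0$ and $g^{-1}.x=u\in\Omega$. Let $s\in G$ be the geodesic symmetry at the midpoint $m:=\tanh\bigl(\tfrac12\artanh u\bigr)$ of the geodesic $t\mapsto\tanh(tu)$. Then $s.0=u$ and $s.u=0$, $s$ lies in $G_\BR$, and its Jacobian at $m$ equals $-I$, so $s$ is order reversing. The element $gs$ satisfies $(gs)^{-1}.x=0$ and $(gs)^{-1}.y=u\in\Omega$, so it is admissible for $C(y,x)$. By order reversal,
\[ C(y,x)=gs.\bigl(\Omega\cap(u-\Omega)\bigr)=g.\bigl(\Omega\cap(u-\Omega)\bigr)=C(x,y). \]
For the orientations, both sets carry the orientation transported from the open subset $\Omega\cap(u-\Omega)\subset\fn^+$, via $g$ and via $gs$ respectively. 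So they differ exactly by the sign of the Jacobian determinant of $s|_{\fn^+}$ on $\Omega\cap(u-\Omega)$. That Jacobian is $-l$ with $l\in\Ad(L)$, and elements of $\Ad(L)$ have positive determinant on $\fn^+$ because $L$ is connected. Its determinant therefore has sign $(-1)^n$, which gives the same orientation for even $n$ and the reversed orientation for odd $n$.
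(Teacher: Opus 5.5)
Your proof is correct in substance and takes a different route from the paper for the set equalities. One justification in the orientation step is false, though the conclusion survives.

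\textbf{Comparison with the paper.} For (1), the paper deforms along $g_t=\exp(t(v,0,v))$. It uses Lemma \ref{lem_cov_det} to see that $\det(g_t^{-1}.x-g_t^{-1}.y)$ stays positive, hence $g_1^{-1}.x\in\Omega$. It then identifies $g_t.(\Omega\cap(u-\Omega))$ with the unique bounded connected component of $\{w\in\fn^+\mid\det(w-g_t.0)\det(g_t.u-w)\ne0\}$. You replace this by a monotonicity principle: real elements whose Jacobians lie in $\Ad(L)$ (resp.\ $-\Ad(L)$) are order preserving (resp.\ reversing) on the convex set $D\cap\fn^+$, which you prove by integrating the Jacobian along segments. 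This gives the interval identity directly and avoids the bounded-component claim. It also gives the set equality in (2) without passing through (1), whereas the paper writes $C(0,u)=-C(0,-u)$ and applies (1). For the orientation, the two proofs use literally the same map. Since $\Ad(i)(v,0,v)=(-v,0,-v)$ for $i=P(\sqrt{-1}e)$, your midpoint symmetry $s=g_{1/2}\,i\,g_{1/2}^{-1}$ equals $i g_1^{-1}$, i.e.\ $z\mapsto-(g_1^{-1}.z)$.

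\textbf{The faulty step.} The claim that elements of $\Ad(L)$ have positive determinant on $\fn^+$ because $L$ is connected is wrong. $L=K^\BC\cap{}^c\hspace{-1pt}G$ need not be connected, and $\Ad(L)|_{\fp^+}$ can reverse the orientation of $\fn^+$. For $\fp^+=\Sym(2,\BC)$ (so $n=3$), $\Ad(L)|_{\fp^+}$ consists of the maps $x\mapsto AxA^{\mathrm{T}}$ with $A\in GL(2,\BR)$. The choice $A=\diag(-1,1)$ preserves $\Omega$ but has determinant $-1$ on $\Sym(2,\BR)$; in general the determinant is $\det(A)^{r+1}$ on $\Sym(r,\BR)$.

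\textbf{The fix.} Use the connectedness device you already employ. The determinant of the Jacobian of $s$ is continuous and nonvanishing on the convex set $\Omega\cap(u-\Omega)$. That set contains the fixed point $m$, where the Jacobian is exactly $-I$. Hence the sign is $(-1)^n$ throughout. The paper instead writes the Jacobian as $(-1)^n\chi(\kappa(g_1^{-1},z))^{2n/r}$ and uses positivity of the transvection factor.

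\textbf{A related point in (1).} That the Jacobian of $g_1$ at $0$ lies in $\Ad(L)$ rather than $-\Ad(L)$ should be justified by continuity along $t\mapsto\exp(t(a,0,a))$ starting from the identity, or by an explicit formula. Calling it ``the identity component'' is not a justification, since $\Ad(L)$ need not be connected.

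\textbf{Minor slip.} The geodesic is $t\mapsto\tanh(t\artanh u)$, not $\tanh(tu)$; your midpoint $m$ is nevertheless correct.
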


\begin{proof}
(1) For $y\in D\cap\fn^+$, $t\in[0,1]$, let $v:=\artanh(y)\in\fn^+$, $g_t:=\exp(t(v,0,v))\in G$, so that $g_1.0=\tanh(v)=y$. 
Then $g_t$ preserves $D\cap\fn^+$. By Lemma \ref{lem_cov_det}, 
\[ \det(g_t^{-1}.x-g_t^{-1}.y)=\chi(\kappa(g_t^{-1},x))\chi(\kappa(g_t^{-1},y))\det(x-y)>0, \]
and hence $x-y$ and $g_t^{-1}.x-g_t^{-1}.y$ belong to the same connected component of $\{z\in\fn^+\mid \det(z)\ne 0\}$ for all $t\in[0,1]$. Since $x-y\in\Omega$ by assumption, we have $g_t^{-1}.x-g_t^{-1}.y\in\Omega$, and especially, $u:=g_1^{-1}.x\in\Omega$. 
Therefore, by the definition of $C(x,y)$, we have 
\[ C(x,y)=C(g_1.u,g_1.0)=g_1.(\Omega\cap(u-\Omega)). \]
Now since $\Omega\cap(u-\Omega)$ coincides with the unique bounded connected component of 
\[ \{z\in\fn^+\mid \det(z)\det(u-z)\ne 0\}, \]
$C(g_t.u,g_t.0)=g_t.(\Omega\cap(u-\Omega))$ coincides with the unique bounded connected component of 
\[ \{w\in\fn^+\mid \det(w-g_t.0)\det(g_t.u-w)\ne 0\}, \]
which is equal to $(g_t.0+\Omega)\cap(g_t.u-\Omega)$. By putting $t=1$, we get the desired formula. 

(2) It is enough to show $C(u,0)=C(0,u)$ for $u\in D\cap\Omega$ with the desired orientation. Indeed, by (1), we have 
\[ C(0,u)=-C(0,-u)=-((-u+\Omega)\cap(-\Omega))=\Omega\cap(u-\Omega)=C(u,0). \]
Let $v:=\artanh(u)\in\fn^+$, $g_1:=\exp((v,0,v))\in G$ so that $g_1.0=u$, 
and let $i:=P(\sqrt{-1}e)\in K$ so that $i.z=-z$ for all $z\in\fp^+$. 
Then since $g_1^{-1}.0=-u$ holds, $z\mapsto ig_1^{-1}.z=-(g_1^{-1}.z)$ maps $C(u,0)$ to $C(0,u)$, 
and the holomorphic top form $dz=dz_1\wedge\cdots\wedge dz_n$ is transformed as 
\[ d(-(g_1^{-1}.z))=\chi(\kappa(ig_1^{-1},z))^{\frac{2n}{r}}dz=\chi(i)^{\frac{2n}{r}}\chi(\kappa(g_1^{-1},z))^{\frac{2n}{r}}dz=(-1)^n\chi(\kappa(g_1^{-1},z))^\frac{2n}{r}dz. \]
Since $\chi(\kappa(g,z))>0$ for all $g\in G\cap{}^c\hspace{-1pt}G$, $z\in\fn^+$, we get the desired orientation. 
\end{proof}

\section{Construction of integral holographic operators}\label{section_construction}
\subsection{Main theorems}

We give the main theorems of this paper. Let $(D\times D)^\times, (T_\Omega\times T_\Omega)^\times$ be as in (\ref{formula_Dtimes}), 
and for $(x,y)\in(D\times D)^\times$ or $(T_\Omega\times T_\Omega)^\times$, let $C(x,y)$ be as in (\ref{formula_contour}). 

\begin{theorem}\label{main_thm}
Let $(\sigma,V_\sigma), (\tau,V_\tau)$ be irreducible representations of $\widetilde{K}$ with the restricted lowest weights 
$-\frac{1}{2}\sum_{j=1}^r\lambda_j\gamma_j$, $-\frac{1}{2}\sum_{j=1}^r\mu_j\gamma_j\in(\fa_\fl^\BC)^\vee$ respectively. 
We choose $m\in\BZ$ such that 
$\Re\lambda_r+m>\frac{n}{r}-1$, $\Re\mu_r+m>\frac{n}{r}-1$ hold. 
Let $(\rho,V_\rho)$ be an irreducible representation of $\widetilde{K}$ such that 
a non-zero operator-valued polynomial 
\[ \rK(x)\in\bigl(\cP(\fp^+)\det(x)^m\otimes\Hom_\BC(V_\rho,V_\sigma\otimes V_\tau)\bigr)^{\widetilde{K}} \]
exists. Then the maps 
\begin{align*}
&\cF^{\sigma,\tau}_{\rho,\rK\uparrow}\colon\cO(D,V_\rho)\longrightarrow
\begin{cases} \cO(D\times D,V_\sigma\otimes V_\tau) & (\text{if }2m\ge \lambda_1+\mu_1-\lambda_r-\mu_r), \\
\cO((D\times D)^\times,V_\sigma\otimes V_\tau) & (\text{if }2m< \lambda_1+\mu_1-\lambda_r-\mu_r),\end{cases} \\
&\cF^{\sigma,\tau}_{\rho,\rK\uparrow}\colon\cO(T_\Omega,V_\rho)\longrightarrow
\begin{cases} \cO(T_\Omega\times T_\Omega,V_\sigma\otimes V_\tau) & (\text{if }2m\ge \lambda_1+\mu_1-\lambda_r-\mu_r), \\
\cO((T_\Omega\times T_\Omega)^\times,V_\sigma\otimes V_\tau) & (\text{if }2m< \lambda_1+\mu_1-\lambda_r-\mu_r),\end{cases}
\end{align*}
given by the same formula 
\begin{multline}
(\cF^{\sigma,\tau}_{\rho,\rK\uparrow} f)(x,y):=\det(x-y)^{\frac{n}{r}}(\sigma\otimes\tau)(P(x-y))\int_{C(x,y)} \det(w-y)^{-\frac{n}{r}}\det(x-w)^{-\frac{n}{r}} \\
{}\times \bigl(\sigma(P(w-y)^{-1})\otimes \tau(P(x-w)^{-1})\bigr)\rK\bigl(((w-y)^{-1}+(x-w)^{-1})^{-1}\bigr) f(w)\,dw \label{formula_mainthm}
\end{multline}
intertwines the $\widetilde{G}$-action. 
\end{theorem}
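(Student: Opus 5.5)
The proof has three parts. First, convert the integral over the moving contour $C(x,y)$ into an integral over a fixed domain, which gives convergence and holomorphy. Second, check $\widetilde{G}$-equivariance by the change of variables $w\mapsto g.w$, using Lemmas \ref{lem_cov_det} and \ref{lem_cov_inv}. Third, transfer between the $D$-picture and the $T_\Omega$-picture via $g_c$.

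\emph{Convergence and holomorphy.} I would work in the tube picture. Take first $x,y\in T_\Omega$ with $x-y\in\Omega$, and assume the analogue of Lemma \ref{lem_ori_contour}\,(1) for $T_\Omega$, namely $C(x,y)=(y+\Omega)\cap(x-\Omega)$ in the relevant real directions. For $x,y$ themselves in $T_\Omega$ the contour is the $g_c$-transform of the $D$-contour, so this needs care. Then parametrize
\[ w=y+P\bigl((x-y)^{1/2}\bigr)t, \qquad t\in\Omega\cap(e-\Omega). \]
Since $P(a)$ is a quadratic-representation automorphism, $w-y=P((x-y)^{1/2})t$ and $x-w=P((x-y)^{1/2})(e-t)$. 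Moreover $((w-y)^{-1}+(x-w)^{-1})^{-1}=P((x-y)^{1/2})\,(t\circ(e-t))$, by the inverse formula for quadratic maps (the operator commuting case inside the subalgebra generated by $t$). Also $dw=\det(x-y)^{n/r}dt$.

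Substituting, all prefactors $\det(x-y)^{\pm n/r}$ and $(\sigma\otimes\tau)(P(x-y))$ combine with the $K$-equivariance of $\rK$. The integral becomes
\[ \int_{\Omega\cap(e-\Omega)}\det(t)^{-n/r}\det(e-t)^{-n/r}\,\Phi(x,y,t)\,f\bigl(y+P((x-y)^{1/2})t\bigr)\,dt, \]
where $\Phi$ is holomorphic in $(x,y)$ and polynomial times $\det(t)^{m}\det(e-t)^m$-type factors times $\sigma(P(t)^{-1})\otimes\tau(P(e-t)^{-1})$ in $t$.

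Integrability near $\partial\Omega$ reduces to the convergence of a Gindikin-type Beta integral $\int_{\Omega\cap(e-\Omega)}\Delta_{\mathbf s}(t)\Delta_{\mathbf s'}(e-t)\,dt$. Its convergence condition is exactly $\Re\lambda_r+m>\frac nr-1$, $\Re\mu_r+m>\frac nr-1$, since the lowest eigenvalue exponents of $\sigma(P(t)^{-1})$ are governed by $\lambda_r$. The resulting expression is manifestly holomorphic in $(x,y)$ wherever $\det(x-y)\ne0$ and its square roots are defined. An identity-theorem / Stokes argument then shows it agrees with the contour integral for general $(x,y)\in(T_\Omega\times T_\Omega)^\times$: the integrand is a holomorphic top form in $w$, so deforming the totally real $n$-cycle with fixed boundary behaviour does not change the integral.

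The dichotomy in the statement then comes from counting the power of $\det(x-y)$ left in front. The factor $(\sigma\otimes\tau)(P(x-y))$ together with $\rK$ of degree $\ge m$ in $\det$ contributes exponents between $m-\frac12(\lambda_1+\mu_1-\lambda_r-\mu_r)$ and higher. Hence for $2m\ge\lambda_1+\mu_1-\lambda_r-\mu_r$ only nonnegative integral powers of $\det(x-y)$ survive, and the function extends across $\det(x-y)=0$ by Riemann extension.

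\emph{Equivariance.} For $g\in G$ use $C(g.x,g.y)=g.C(x,y)$ and substitute $w=g.w'$, with $dw=\chi(\kappa(g,w'))^{2n/r}dw'$. Lemma \ref{lem_cov_det} gives
\[ P(g.x-g.y)=\kappa(g,x)P(x-y){}^t\kappa(g,y), \qquad P(g.w-g.y)^{-1}={}^t\kappa(g,y)^{-1}P(w-y)^{-1}\kappa(g,w)^{-1}. \]
For the $\tau$-factor I use the other ordering, so that $\kappa(g,x)$ and $\kappa(g,w)$ cancel and $\kappa(g,y)$ survives. The scalar $\chi$-powers cancel: $(\chi_x\chi_y)^{n/r}(\chi_w^2\chi_x\chi_y)^{-n/r}\chi_w^{2n/r}=1$. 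Lemma \ref{lem_cov_inv}\,(3) gives that the argument of $\rK$ becomes $\kappa(g,w).(\cdots)$. $\widetilde K$-invariance of $\rK$ then converts this into $(\sigma\otimes\tau)(\kappa(g,w))\,\rK(\cdots)\,\rho(\kappa(g,w))^{-1}$. The left factor cancels the leftover $\kappa(g,w)^{-1}$'s, and the right factor produces $\hat\rho(g)$ on $f$. What remains is $\sigma(\kappa(g,x))\otimes\tau(\kappa(g,y))$ in front, which is the action $\hat\sigma\hotimes\hat\tau$.

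One must track branches on $\widetilde K^\BC$; continuity in $g$ from the identity fixes them. Orientation is preserved because $G$ is connected. The $T_\Omega$ case follows from the second formulas in Lemmas \ref{lem_cov_det} and \ref{lem_cov_inv} by conjugating with $g_c$.

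\emph{Main obstacle.} I expect the hardest step to be the first one: making the fixed-domain parametrization rigorous for general complex $(x,y)$, where $C(x,y)$ is defined only through a choice of $g$. I would attack it by writing $C(x,y)=g.(\Omega\cap(u-\Omega))$ and using equivariance already at the level of the parametrized integral, together with Lemma \ref{lem_bdd}. That lemma keeps the contour in a compact set, which justifies differentiating under the integral.
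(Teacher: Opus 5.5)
Your equivariance computation (substituting $w=g.z$, using $C(g.x,g.y)=g.C(x,y)$, Lemmas \ref{lem_cov_det}, \ref{lem_cov_inv} and the $\widetilde{K}$-invariance of $\rK$) is exactly Lemma \ref{lem_intertwining}. The passage to $T_\Omega$ via $g_c$ is also the paper's. The gap is in the analytic half of the theorem.

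\emph{Holomorphy on $(D\times D)^\times$.} The fixed-domain formula with $w=y+P((x-y)^{1/2})t$ is not a valid substitute for $\int_{C(x,y)}$, for three reasons.
\begin{enumerate}
\item Your base case is misidentified. For $x,y\in T_\Omega$ with $x-y\in\Omega$, the contour is not $(y+\Omega)\cap(x-\Omega)$. Already for $\fp^+=\BC$, $C(1+\sqrt{-1},-1+\sqrt{-1})$ is the Poincar\'e geodesic, an arc of $|w|=\sqrt{2}$; the flat case is $x-y\in\sqrt{-1}\Omega$.
\item More seriously, the chain $y+P((x-y)^{1/2})(\Omega\cap(e-\Omega))$ need not lie in the domain, so $f(w)$ is not even defined on it. In $\fp^+=\Sym(2,\BC)$ take $N=\bigl(\begin{smallmatrix}\sqrt{-1}&1\\1&-\sqrt{-1}\end{smallmatrix}\bigr)$, so $N^2=0$, and $x=\frac{1}{10}e+\frac{2}{5}N$. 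Then $|x|_\infty\le\frac{9}{10}$, $\det(x)\ne0$ and $x^{1/2}=\frac{1}{\sqrt{10}}(e+2N)$. For $t=vv^{T}$ with $v=(1,1)/\sqrt{2}$, and for nearby interior points, $|P(x^{1/2})t|_\infty=|x^{1/2}v|^2=\frac{13}{10}>1$. Also $C(x,y)\ne y+C(x-y,0)$ when $y\ne0$, since complex translations are not in $G$.
\item The identity theorem cannot carry an equality from a base set over to $\int_{C(x,y)}$, because holomorphy of that integral is what is being proved. Your Stokes deformation needs an explicit homotopy of chains inside $D$ with negligible side boundary, which you do not provide. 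The fallback $C(x,y)=g.(\Omega\cap(u-\Omega))$ does not help, because $g$ and $u$ depend only real-analytically on $(x,y)$.
\end{enumerate}
The missing idea is the paper's localization. First reduce to $y=0$ by equivariance, and to holomorphy in $x$ alone by Lemma \ref{lem_symmetric} and Hartogs. Then complexify the real parameter $kl\in KL^-$, for which $kl.(\Omega\cap(e-\Omega))=C(kl.e,0)$. Replace it by $a$ in a neighbourhood $\cU\subset K^\BC$ of $KL^-$ with $a.(\overline{\Omega}\cap(e-\overline{\Omega}))\subset D$; this $\cU$ is built from the diffeomorphism $K^\BC\simeq((K\times L)/\diag(K_L))\times\sqrt{-1}\fk_\fl$ of Lemma \ref{lem_diffeo_m}. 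The integral $F(a)$ over $a.(\Omega\cap(e-\Omega))$ is holomorphic in $a$. Stokes along paths in the fibres $a\exp(\sqrt{-1}\fk_\fl)$ shows $F$ depends only on $a.e$: the side boundary terms vanish for $\Re\lambda,\Re\mu$ large, and one then continues analytically in the parameters of $\sigma,\tau$. Hence $F$ descends to a holomorphic function on $D^\times$ that agrees with $(\cF^{\sigma,\tau}_{\rho,\rK\uparrow}f)(\cdot,0)$.

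\emph{Extension across $\det(x-y)=0$.} Counting the powers of $\det(x-y)$ in front is not an argument. In general the prefactor is operator-valued, and the whole expression, not just a power of $\det(x-y)$, degenerates as $x-y$ drops rank. Moreover $P((x-y)^{1/2})$ has no holomorphic extension there. Riemann's extension theorem needs $\det(x-y)^{-m'}\cF^{\sigma,\tau}_{\rho,\rK\uparrow}f$, with $m'=m-\lceil(\lambda_1+\mu_1-\lambda_r-\mu_r)/2\rceil$, to be locally bounded on all of $D\times D$. That is the content of Lemma \ref{lem_conti}, proved as follows:
\begin{enumerate}
\item Lemma \ref{lem_bdd} keeps the contours over a relatively compact $Y$ inside a relatively compact $Y'$.
\item One moves $y$ to $0$ by $p_y$ and rotates $u$ into $\fa^+_{++}$ by $K$.
\item One bounds $\det^{-m}\rK$ on $D$, using $(z^{-1}+(u-z)^{-1})^{-1}\in D$.
\item One uses $\bigl|(\sigma\otimes\tau)(P(u^{\prime 1/2}))\bigr|\,\bigl|(\sigma\otimes\tau)(P(u^{\prime -1/2}))\bigr|\det(u')^{m-m'}\le1$ on $\fa^+_{++}\cap(e-\Omega)$.
\end{enumerate}
The last inequality is where the threshold $2m\ge\lambda_1+\mu_1-\lambda_r-\mu_r$ actually enters. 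Without a bound of this kind, the dichotomy in the statement remains unproved.
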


The result for $T_\Omega$ follows from that for $D$ and 
\[ \cF^{\sigma,\tau}_{\rho,\rK\uparrow,T_\Omega}=(\hat{\sigma}\hotimes\hat{\tau})(g_c)\circ\cF^{\sigma,\tau}_{\rho,\rK\uparrow,D}\circ\hat{\rho}(g_c^{-1}), \]
which is proved as in the proof of Lemma \ref{lem_intertwining} below. 
In the following, we give the results and proofs only for $D$. 

We note that $C(x,y)\subset D$ is defined only for $(x,y)\in(D\times D)^\times$, namely, for $\det(x-y)\ne 0$ case, and hence $\cF^{\sigma,\tau}_{\rho,\rK\uparrow}f$ is originally defined as a function on $(D\times D)^\times$. 
If $2m\ge \lambda_1+\mu_1-\lambda_r-\mu_r$, then this is holomorphically continued to the function on $D\times D$. 
On the other hand, without this assumption, the holomorphy at $\{\det(x-y)=0\}$ does not hold in general, even if $m\in\BZ_{\ge 0}$. 
However, if $(\rho,V_\rho)$ satisfies some additional assumption, then the image becomes holomorphic on $D\times D$. 

\begin{theorem}\label{main_thm2}
Let $(\sigma,V_\sigma)$, $(\tau,V_\tau)$, $(\rho,V_\rho)$ and $m\in\BZ$ be as in Theorem \ref{main_thm}. We assume $m\in\BZ_{\ge 0}$, $(\hat{\rho},\cO(D,V_\rho))$ is irreducible, and additionally assume 
\[ \bigl(\cP(\fp^+)[\det^{-1}]\otimes\Hom_\BC(V_\rho,V_\sigma\otimes V_\tau)\bigr)^{\widetilde{K}}
=\bigl(\cP(\fp^+)\otimes\Hom_\BC(V_\rho,V_\sigma\otimes V_\tau)\bigr)^{\widetilde{K}}. \]
Then the map 
\[ \cF^{\sigma,\tau}_{\rho,\rK\uparrow}\colon\cO(D,V_\rho)\longrightarrow
\cO(D\times D,V_\sigma\otimes V_\tau)  \]
defined by (\ref{formula_mainthm}) intertwines the $\widetilde{G}$-action. 
\end{theorem}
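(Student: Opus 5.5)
We take Theorem \ref{main_thm} as given. It says that $\cF^{\sigma,\tau}_{\rho,\rK\uparrow}$ defined by (\ref{formula_mainthm}) is a $\widetilde{G}$-intertwining operator from $\cO(D,V_\rho)$ to $\cO((D\times D)^\times,V_\sigma\otimes V_\tau)$. What remains for Theorem \ref{main_thm2} is holomorphy of the image across the divisor $\{\det(x-y)=0\}$. The plan has three steps: reduce to one convenient vector via irreducibility, show that for this vector the image is a holomorphic function times a Laurent-polynomial null vector, and then use the extra assumption to remove poles.

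\emph{Step 1 (reduction).} The subspace
\[ \cO(D,V_\rho)_0:=\{f\in\cO(D,V_\rho)\mid \cF^{\sigma,\tau}_{\rho,\rK\uparrow}f \text{ extends holomorphically to } D\times D\} \]
is $\widetilde{G}$-stable, because $(D\times D)^\times$ is stable under the diagonal action (Lemma \ref{lem_cov_det}\,(3)) and the operator intertwines. I expect it to be closed under locally uniform convergence. The natural tool is Lemma \ref{lem_bdd}: it says $C(x,y)\subset B_y(\dist(x,y))$, so the integrand lives on a compact set. This gives locally uniform estimates of the image off the divisor, and the Riemann extension theorem (or a Hartogs-type argument along the codimension-one divisor) then passes holomorphic extendability to limits. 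Since $(\hat{\rho},\cO(D,V_\rho))$ is irreducible, a nonzero closed invariant subspace is everything. So it suffices to exhibit one nonzero $f$ whose image extends. The natural choice is the constant function $f=v\in V_\rho$, which lies in the minimal $\widetilde{K}$-type.

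\emph{Step 2 (the image of a constant).} Intertwining with the $\fp^+$-action, i.e.\ translations $(x,y)\mapsto(x+u,y+u)$, combined with $\fp^+$-invariance of constants, shows that $F:=\cF^{\sigma,\tau}_{\rho,\rK\uparrow}v$ depends only on $x-y$. This can also be checked directly from the integrand using Lemma \ref{lem_cov_inv}. The $\widetilde{K}$-equivariance then says
\[ F(x,y)=\Phi(x-y), \qquad \Phi\in\bigl(\cO(\fp^{+\times}\cap(D-D))\otimes\Hom_\BC(V_\rho,V_\sigma\otimes V_\tau)\bigr)^{\widetilde{K}}\ \text{(applied to } v). \]
Next, restrict $\Phi$ to $\Omega\cap(D-D)$ and use $\widetilde{K}$-covariance, in particular the action of $\exp(\sqrt{-1}\fa_\fl)$ by complex scalings along the Jordan frame, together with homogeneity under the centre of $\fk$. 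These should force $\Phi$ to be a $\widetilde{K}$-equivariant function on the open dense orbit $\fp^{+\times}$. Such functions are determined by their value at $e$, and the covariance lets us match $\Phi$ with an element of $\bigl(\cP(\fp^+)[\det^{-1}]\otimes\Hom_\BC(V_\rho,V_\sigma\otimes V_\tau)\bigr)^{\widetilde{K}}$. Alternatively, one can argue that the $\widetilde{K}$-type decomposition (\ref{formula_HKS}) of $\cP(\fp^+)[\det^{-1}]$ exhausts the $\widetilde{K}$-finite equivariant functions on $\fp^{+\times}$.

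\emph{Step 3 (conclusion).} By hypothesis the space $\bigl(\cP(\fp^+)[\det^{-1}]\otimes\Hom_\BC(V_\rho,V_\sigma\otimes V_\tau)\bigr)^{\widetilde{K}}$ consists of polynomials, so $\Phi$ is polynomial and $F$ extends to $D\times D$. We must also check $F\neq0$, for instance by evaluating at $x=u\in\Omega$, $y=0$, where the integrand is a positive-type integral against $\rK$ on $\Omega\cap(u-\Omega)$. The condition $m\ge0$ should enter here: it ensures $\rK$ itself is polynomial, so that the integrand makes sense and is nonvanishing. The main obstacle is Step 2, namely rigorously identifying the translation-invariant, $\widetilde{K}$-covariant holomorphic function on $\fp^{+\times}$ with a Laurent polynomial, including control of multivaluedness through the universal covers. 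A secondary difficulty is the closedness argument in Step 1.
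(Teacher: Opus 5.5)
\textbf{Verdict: there is a genuine gap, and it is in the step you flag yourself, Step 2.} Your overall architecture is essentially the paper's. You reduce by irreducibility to the constants $V_\rho$, show that $\cF^{\sigma,\tau}_{\rho,\rK\uparrow}v$ is a polynomial in $x-y$, and then pass to all of $\cO(D,V_\rho)$ using continuity and the closedness of $\cO(D\times D,V_\sigma\otimes V_\tau)$ in $\cO((D\times D)^\times,V_\sigma\otimes V_\tau)$. Your translation-invariance argument is also fine.

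What is missing is a proof that $\Phi$ lies in $\bigl(\cP(\fp^+)[\det^{-1}]\otimes\Hom_\BC(V_\rho,V_\sigma\otimes V_\tau)\bigr)^{\widetilde K}$.
\begin{itemize}
\item The compact group $\widetilde K$ only moves points inside $2D^\times$, including $\exp(\sqrt{-1}\fa_\fl)$ and the centre. To reach $\fp^{+\times}$ you would have to complexify the covariance and continue $\Phi$ analytically along the non-compact directions of $\widetilde K^\BC$, controlling monodromy.
\item Even granting that, the claim that a $\widetilde K^\BC$-equivariant holomorphic function on $\fp^{+\times}$ is a Laurent polynomial is an algebraicity statement. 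It says that holomorphic $K^\BC$-finite functions on $K^\BC/K_L^\BC$ are regular, and you would also need a Frobenius-reciprocity argument on top.
\item The decomposition (\ref{formula_HKS}) does not supply this. It decomposes $\cP(\fp^+)[\det^{-1}]$; it does not say that every $\widetilde K$-finite holomorphic function on $\fp^{+\times}$ lies in it.
\end{itemize}
So, as written, Step 2 is asserted rather than proved.

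\textbf{The missing idea is the a priori pole-order bound.} Lemma \ref{lem_holomorphic} comes from the $L^\infty_{\mathrm{loc}}$ estimate of Lemma \ref{lem_conti} together with the Riemann extension theorem. It gives $\det(x-y)^{-m'}\cF^{\sigma,\tau}_{\rho,\rK\uparrow}f\in\cO(D\times D,V_\sigma\otimes V_\tau)$ with $m'=m-\lceil(\lambda_1+\mu_1-\lambda_r-\mu_r)/2\rceil\in\BZ$. The argument then runs as follows.
\begin{itemize}
\item Restricting to $y=0$, the function $x\mapsto\det(x)^{-m'}(\cF^{\sigma,\tau}_{\rho,\rK\uparrow}v)(x,0)$ is a $\widetilde K$-finite holomorphic function on the whole circular domain $D$.
\item The centre of $K$ acts on $\fp^+$ by rotations $x\mapsto e^{\sqrt{-1}\theta}x$. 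Hence only finitely many homogeneous components of its Taylor series at $0$ survive, and it is a polynomial.
\item So $v\mapsto(\cF^{\sigma,\tau}_{\rho,\rK\uparrow}v)(\cdot,0)$ lies in $\Hom_{\widetilde K}\bigl(V_\rho,\cP(\fp^+,V_\sigma\otimes V_\tau)[\det^{-1}]\bigr)$. No continuation to $\fp^{+\times}$ and no algebraic group theory are needed.
\item The hypothesis then makes it polynomial, and translation invariance finishes.
\end{itemize}

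\textbf{Two smaller points.}
\begin{itemize}
\item In Step 1 the Riemann extension theorem does not apply to a limit of extendable functions, because you have no local boundedness near $\{\det(x-y)=0\}$. What you need is that $\cO(D\times D)$ is closed in $\cO((D\times D)^\times)$. This follows from the maximum principle on discs transverse to the divisor; the paper does it through the rank stratification, since the divisor is singular.
\item You do not need $\cF^{\sigma,\tau}_{\rho,\rK\uparrow}v\neq0$. The constant $v$ lies in your subspace $\cO(D,V_\rho)_0$ regardless, and that is all irreducibility requires. So $m\ge0$ plays no role at that point.
\end{itemize}
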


Especially, if $\sigma=\chi^{-\lambda}$, $\tau=\chi^{-\mu}$ with $\lambda,\mu\in\BC$, 
then $\lambda_1=\lambda_r=\lambda$, $\mu_1=\mu_r=\mu$ hold, 
and by (\ref{formula_HKS}), $\rho$ satisfies the assumption if and only if $\rho=\chi^{-\lambda-\mu}\otimes V_{\bk\gamma}^\vee$ for some $\bk\in\BZ_{++}^r$ with $k_r\ge m$, 
where $V_{\bk\gamma}^\vee$ is the irreducible representation of $K$ with the lowest weight $-(k_1\gamma_1+\cdots+k\gamma_r)\in(\ft^\BC)^\vee$. 
Therefore, the following holds. 

\begin{corollary}\label{cor_scalar1}
Let $\lambda,\mu\in\BC$, $\bk\in\BZ_{++}^r$, $\Re\lambda, \Re\mu>-k_r+\frac{n}{r}-1$. We fix a non-zero polynomial 
\[ \rK_\bk(x)\in\bigl(\cP(\fp^+)\otimes \Hom_\BC(V_{\bk\gamma}^\vee,\BC)\bigr)^K, \]
which is unique up to a constant multiple. Then the map 
\begin{gather*}
\cF^{\lambda,\mu}_{\bk\uparrow}\colon\cO_{\lambda+\mu}(D,V_{\bk\gamma}^\vee)\longrightarrow\cO_\lambda(D)\hotimes\cO_\mu(D), \\
\begin{split}
(\cF^{\lambda,\mu}_{\bk\uparrow} f)(x,y):=\det(x-y)^{-\lambda-\mu+\frac{n}{r}}\int_{C(x,y)} \det(w-y)^{\lambda-\frac{n}{r}}\det(x-w)^{\mu-\frac{n}{r}} \\
{}\times \rK_\bk\bigl(((w-y)^{-1}+(x-w)^{-1})^{-1}\bigr) f(w)\,dw 
\end{split}
\end{gather*}
intertwines the $\widetilde{G}$-action. 
\end{corollary}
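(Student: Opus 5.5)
We obtain Corollary \ref{cor_scalar1} by specializing Theorem \ref{main_thm2} to $\sigma=\chi^{-\lambda}$, $\tau=\chi^{-\mu}$, $\rho=\chi^{-\lambda-\mu}\otimes V_{\bk\gamma}^\vee$, with $m:=k_r\in\BZ_{\ge 0}$.

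\emph{Hypotheses.} In this case $\lambda_1=\lambda_r=\lambda$ and $\mu_1=\mu_r=\mu$, so the condition $\Re\lambda_r+m>\frac{n}{r}-1$ becomes $\Re\lambda>-k_r+\frac{n}{r}-1$, which is assumed; the same holds for $\mu$. Since $\Hom_\BC(V_\rho,V_\sigma\otimes V_\tau)\simeq\Hom_\BC(V_{\bk\gamma}^\vee,\BC)$ (the characters cancel), the space of invariants in Theorem \ref{main_thm2} is identified with $\bigl(\cP(\fp^+)[\det^{-1}]\otimes (V_{\bk\gamma}^\vee)^\vee\bigr)^{K}$. By (\ref{formula_HKS}) this space is one-dimensional, because $V_{\bk\gamma}^\vee$ occurs exactly once in $\cP(\fp^+)[\det^{-1}]$. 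The same decomposition shows it already lies in $\cP(\fp^+)$, since $\bk\in\BZ_{++}^r$. For the stronger requirement that $\rK\in\cP(\fp^+)\det(x)^m\otimes\Hom$, write $\rK_\bk=\det(x)^{k_r}\rK_{\bk-k_r}$; this works because multiplication by $\det^{k_r}$ shifts $V^\vee_{(\bk-k_r)\gamma}$ to $V^\vee_{\bk\gamma}$, and $\bk-k_r\in\BZ_{++}^r$. The same argument gives uniqueness of $\rK_\bk$ up to scalar.

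\emph{Irreducibility.} The remaining hypothesis of Theorem \ref{main_thm2} is that $\cO_{\lambda+\mu}(D,V_{\bk\gamma}^\vee)$ is irreducible. This is the main obstacle. I would try to deduce it from the known reducibility points of scalar-type and vector-valued holomorphic modules: the bound $\Re(\lambda+\mu)>-2k_r+\frac{2n}{r}-2$ should place the highest weight in the region where the generalized Verma module is irreducible. If a clean reference does not cover all parameters, I would first prove the statement for $(\lambda,\mu)$ in an open set where irreducibility holds, for instance for large real parts. Both sides of the intertwining identity are holomorphic in $(\lambda,\mu)$ for fixed $f$ and $g$: the integral converges locally uniformly on $C(x,y)$ under the $\Re$-conditions, by the same integrability argument as in Theorem \ref{main_thm}. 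Analytic continuation then gives the identity on the whole stated range.

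\emph{Formula.} Finally we substitute into (\ref{formula_mainthm}). We have $\sigma(P(z))=\chi(P(z))^{-\lambda}=\det(z)^{-\lambda}$, and similarly for $\tau$. The prefactor therefore becomes $\det(x-y)^{\frac{n}{r}}\det(x-y)^{-\lambda-\mu}$, and inside the integral the factor $\sigma(P(w-y)^{-1})\otimes\tau(P(x-w)^{-1})$ contributes $\det(w-y)^{\lambda}\det(x-w)^{\mu}$. This yields exactly the displayed integral, with branches fixed by the universal covering convention, and the target space is $\cO(D\times D)$ as stated.
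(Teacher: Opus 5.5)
Your checks of the hypotheses are fine: the cancellation of characters in $\Hom_\BC(V_\rho,V_\sigma\otimes V_\tau)$, multiplicity one from (\ref{formula_HKS}), the factorization $\rK_\bk=\det(x)^{k_r}\rK_{\bk-\underline{k_r}}$, and the substitution into (\ref{formula_mainthm}). However, by routing the argument through Theorem \ref{main_thm2} you take on a hypothesis you never discharge, namely irreducibility of $(\hat{\rho},\cO_{\lambda+\mu}(D,V_{\bk\gamma}^\vee))$. Your claim that $\Re(\lambda+\mu)>-2k_r+\frac{2n}{r}-2$ forces irreducibility is unproved; in the $Sp(r,\BR)$ example the paper has to impose irreducibility as a separate condition in comparable vector-valued situations.

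Your fallback via analytic continuation also misses the point. The intertwining identity on $(D\times D)^\times$ already holds for every parameter by Lemma \ref{lem_intertwining}, so continuing it in $(\lambda,\mu)$ gains nothing. What would have to be propagated is the holomorphic extension of $\cF^{\lambda,\mu}_{\bk\uparrow}f$ across $\{\det(x-y)=0\}$, and ``continuation of the identity'' does not address that. As written, your final assertion that the target is $\cO(D\times D)$ is unsupported.

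The missing observation is that no irreducibility is needed. For one-dimensional $\sigma,\tau$ you have $\lambda_1+\mu_1-\lambda_r-\mu_r=0\le 2k_r$. So Theorem \ref{main_thm} with $m=k_r$ (whose hypotheses you have in fact already verified) falls into its first case and gives an intertwiner directly into $\cO(D\times D)$. Concretely, Lemma \ref{lem_conti} with $m'=k_r\ge 0$ makes $\cF^{\lambda,\mu}_{\bk\uparrow}f$ locally bounded near $\{\det(x-y)=0\}$. Lemma \ref{lem_holomorphic}, which combines this bound with the Riemann extension theorem, then gives $\cF^{\lambda,\mu}_{\bk\uparrow}f\in\cO(D\times D)\det(x-y)^{k_r}\subset\cO(D\times D)$.

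This is exactly how the paper obtains the corollary. Theorem \ref{main_thm2} only adds something when $2m<\lambda_1+\mu_1-\lambda_r-\mu_r$, which cannot occur here.
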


When $\bk=(l,\ldots,l)$, we have $\cO_\lambda(D,V_{\bk\gamma}^\vee)\simeq\cO_{\lambda+2l}(D)$, and may assume $\rK_\bk(x)=\det(x)^l$. 
In this case, by \cite[Lemma X.4.4\,(i)]{FK} we have 
\begin{equation}\label{formula_det_inv}
\det\bigl(((w-y)^{-1}+(x-w)^{-1})^{-1}\bigr)=\det(w-y)\det(x-y)^{-1}\det(x-w), 
\end{equation}
and the above corollary is simplified as follows. 

\begin{corollary}\label{cor_scalar2}
Let $\lambda,\mu\in\BC$, $l\in\BZ_{\ge 0}$, $\Re\lambda, \Re\mu>-l+\frac{n}{r}-1$. Then the map 
\begin{gather*}
\cF^{\lambda,\mu}_{l\uparrow}\colon\cO_{\lambda+\mu+2l}(D)\longrightarrow\cO_\lambda(D)\hotimes\cO_\mu(D), \\
(\cF^{\lambda,\mu}_{l\uparrow} f)(x,y):=\det(x-y)^{-\lambda-\mu-l+\frac{n}{r}}\int_{C(x,y)} f(w)\det(w-y)^{\lambda+l-\frac{n}{r}}\det(x-w)^{\mu+l-\frac{n}{r}}\,dw 
\end{gather*}
intertwines the $\widetilde{G}$-action. 
\end{corollary}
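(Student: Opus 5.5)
Corollary \ref{cor_scalar2} is the special case $\bk=(l,\ldots,l)$ of Corollary \ref{cor_scalar1}, simplified by (\ref{formula_det_inv}). I will specialize Corollary \ref{cor_scalar1} (equivalently Theorem \ref{main_thm2} with $\sigma=\chi^{-\lambda}$, $\tau=\chi^{-\mu}$) and then simplify the integrand.

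First, for $\bk=(l,\ldots,l)$ the representation $V_{\bk\gamma}^\vee$ is one-dimensional with lowest weight $-l(\gamma_1+\cdots+\gamma_r)=-2l\,d\chi$, so $V_{\bk\gamma}^\vee\simeq\chi^{-2l}$. Hence $\cO_{\lambda+\mu}(D,V_{\bk\gamma}^\vee)=\cO(D,\chi^{-\lambda-\mu}\otimes\chi^{-2l})=\cO_{\lambda+\mu+2l}(D)$, compatibly with the $\widetilde{G}$-actions. The space $(\cP(\fp^+)\otimes\Hom_\BC(\chi^{-2l},\BC))^K$ is the space of polynomials $p$ with $p(kx)=\chi(k)^{l}\cdots$, i.e.\ transforming by $\chi^{2l}$ under $K^\BC$. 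By (\ref{formula_HKS}) it is spanned by $\det(x)^l$, since $\det(kx)=\chi(k)^{2}\det(x)$ follows from $\chi(P(kx))=\chi(kP(x){}^tk)$. We may therefore take $\rK_\bk(x)=\det(x)^l$. The hypothesis $\Re\lambda,\Re\mu>-l+\frac{n}{r}-1$ is exactly the hypothesis $\Re\lambda,\Re\mu>-k_r+\frac nr-1$ of Corollary \ref{cor_scalar1}.

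Substituting into the formula of Corollary \ref{cor_scalar1} and applying (\ref{formula_det_inv}), the factor $\rK_\bk(((w-y)^{-1}+(x-w)^{-1})^{-1})$ becomes $\det(w-y)^l\det(x-y)^{-l}\det(x-w)^l$. Collecting exponents gives the integrand $f(w)\det(w-y)^{\lambda+l-\frac nr}\det(x-w)^{\mu+l-\frac nr}$ and the prefactor $\det(x-y)^{-\lambda-\mu-l+\frac nr}$, which is the claimed formula.

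The only subtle step is that the powers are defined on universal coverings, so identities such as $(\det(w-y)^l)\cdot\det(w-y)^{\lambda-\frac nr}=\det(w-y)^{\lambda+l-\frac nr}$ require consistent branch choices. For integer $l$ this holds: the integer power is single-valued, and the identity $\det(a)^{s}\det(a)^{l}=\det(a)^{s+l}$ holds on any branch. Formula (\ref{formula_det_inv}) itself is a polynomial identity (from $\det(z)^{2n/r}=\Det P(z)$ and the $P$-identity in the proof of Lemma \ref{lem_cov_inv}(1)), up to a sign to be checked at $w$ near $y$. Assuming the earlier corollary, I expect no real obstacle; the rest is bookkeeping.
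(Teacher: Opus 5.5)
Your proposal is correct and follows the paper's own derivation: specialize Corollary \ref{cor_scalar1} to $\bk=(l,\ldots,l)$, identify $V_{\bk\gamma}^\vee\simeq\chi^{-2l}$ so that $\cO_{\lambda+\mu}(D,V_{\bk\gamma}^\vee)=\cO_{\lambda+\mu+2l}(D)$, take $\rK_\bk=\det^l$, and simplify with (\ref{formula_det_inv}). Two side remarks need correcting, though neither affects the argument: Corollary \ref{cor_scalar1} comes from Theorem \ref{main_thm} with $m=k_r$ (holomorphy on $D\times D$ is automatic there because $\lambda_1+\mu_1-\lambda_r-\mu_r=0$), not from Theorem \ref{main_thm2}, which carries an extra irreducibility hypothesis; and (\ref{formula_det_inv}) holds exactly, with nothing to check at a point, because applying $\chi$ (using $\chi(P(z))=\det(z)$) to the $P$-identity in the proof of Lemma \ref{lem_cov_inv}\,(1) gives it directly, whereas going through $\Det P(z)=\det(z)^{2n/r}$ would leave an ambiguity by a $\frac{2n}{r}$-th root of unity, not merely a sign.
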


\begin{example}
Let $\fp^+=\Sym(r,\BC)$, $G=Sp(r,\BR)$. Let $\ft\subset\fk$ be a Cartan subalgebra, and we identify $(\ft^\BC)^\vee\simeq\BC^r$ with the standard basis $\{\be_1,\ldots,\be_r\}\in\BC^r$, such that $\gamma_j=2\be_j$ holds. 

Let $V_\sigma:=\chi^{-\lambda}\otimes(\BC^r)^\vee$, $V_\tau:=\chi^{-\mu}\otimes(\BC^r)^\vee$. 
These have the lowest weights $-((\lambda+1)\be_1+\lambda(\be_2+\cdots+\be_r))$ and $-((\mu+1)\be_1+\mu(\be_2+\cdots+\be_r))$ respectively, 
and hence $(\lambda_1+\mu_1-\lambda_r-\mu_r)/2=1$. Then we have 
\begin{align*}
&\cP(\fp^+)\otimes V_\sigma\otimes V_\tau
\simeq\chi^{-\lambda-\mu}\otimes\bigoplus_{\bk\in\BZ_{++}^r}V_{2\bk}^\vee\otimes V_{\be_1}^\vee\otimes V_{\be_1}^\vee \\
&\simeq\chi^{-\lambda-\mu}\otimes\bigoplus_{\bk\in\BZ_{++}^r}\bigoplus_{\substack{1\le i,j\le r \\ 2\bk+\be_i+\be_j\in\BZ_{++}^r}}V_{2\bk+\be_i+\be_j}^\vee \\
&\simeq\chi^{-\lambda-\mu}\otimes\biggl(\bigoplus_{\bk\in\BZ_{++}^r}\bigoplus_{\substack{1\le i<j\le r \\ 2\bk+\be_i+\be_j\in\BZ_{++}^r}}(V_{2\bk+\be_i+\be_j}^\vee)^{\oplus 2}\oplus\bigoplus_{\substack{\bk\in\BZ_{++}^r \\ k_1\ge 1}}(V_{2\bk}^\vee)^{\oplus\sharp\{1\le j\le r\mid k_j>k_{j+1}\}}\biggr),
\end{align*}
where $k_{r+1}:=0$, and hence, for $\lambda,\mu>\frac{2n}{r}-1=r$, we have 
\begin{align*}
&\cH_\lambda(D,(\BC^r)^\vee)\hotimes\cH_\mu(D,(\BC^r)^\vee) \\
&\simeq\bigoplus_{\bk\in\BZ_{++}^r}\bigoplus_{\substack{1\le i<j\le r \\ 2\bk+\be_i+\be_j\in\BZ_{++}^r}}\cH_{\lambda+\mu}(D,V_{2\bk+\be_i+\be_j}^\vee)^{\oplus 2}\oplus\bigoplus_{\substack{\bk\in\BZ_{++}^r \\ k_1\ge 1}}\cH_{\lambda+\mu}(D,V_{2\bk}^\vee)^{\oplus\sharp\{1\le j\le r\mid k_j>k_{j+1}\}}, 
\end{align*}
where $k_{r+1}:=0$. Similarly, we have 
\begin{align*}
&\cP(\fp^+)[\det^{-1}]\otimes V_\sigma\otimes V_\tau
\simeq\chi^{-\lambda-\mu}\otimes\bigoplus_{\bk\in\BZ_{+}^r}V_{2\bk}^\vee\otimes V_{\be_1}^\vee\otimes V_{\be_1}^\vee \\
&\simeq\chi^{-\lambda-\mu}\otimes\bigoplus_{\bk\in\BZ_{+}^r}\bigoplus_{\substack{1\le i,j\le r \\ 2\bk+\be_i+\be_j\in\BZ_{+}^r}}V_{2\bk+\be_i+\be_j}^\vee \\
&\simeq\chi^{-\lambda-\mu}\otimes\biggl(\bigoplus_{\bk\in\BZ_{+}^r}\bigoplus_{\substack{1\le i<j\le r \\ 2\bk+\be_i+\be_j\in\BZ_{+}^r}}(V_{2\bk+\be_i+\be_j}^\vee)^{\oplus 2}\oplus\bigoplus_{\bk\in\BZ_{+}^r}(V_{2\bk}^\vee)^{\oplus\sharp\{1\le j\le r\mid k_j>k_{j+1}\}}\biggr),
\end{align*}
where $k_{r+1}:=-\infty$. 
\begin{enumerate}
\item Suppose $V_\rho=\chi^{-\lambda-\mu}\otimes V_{2\bk+\be_i+\be_j}^\vee$ with $\bk\in\BZ_{++}^r$, $1\le i<j\le r$, $2\bk+\be_i+\be_j\in\BZ_{++}^r$. Then we have 
\[ \Hom_{\widetilde{K}}(V_\rho,\cP(\fp^+)\otimes V_\sigma\otimes V_\tau)=\Hom_{\widetilde{K}}(V_\rho,\cP(\fp^+)[\det^{-1}]\otimes V_\sigma\otimes V_\tau). \]
Hence, if one of the conditions 
\begin{align*}
&\textstyle k_r\ge 1,\; \Re\lambda+k_r>\frac{r-1}{2},\; \Re\mu+k_r>\frac{r-1}{2},\\
\text{or}\; &\textstyle k_r=0,\; \Re\lambda>\frac{r-1}{2},\; \Re\mu>\frac{r-1}{2},\; \cO(D,V_\rho)\colon\text{irreducible},
\end{align*}
is satisfied, then 
\[ \cF^{\sigma,\tau}_{\rho,\rK\uparrow}\colon\cO(D,V_\rho)\longrightarrow
\cO(D\times D,V_\sigma\otimes V_\tau) \]
defined by (\ref{formula_mainthm}) intertwines the $\widetilde{G}$-action. 
\item Suppose $V_\rho=\chi^{-\lambda-\mu}\otimes V_{2\bk}^\vee$ with $\bk\in\BZ_{++}^r$, $k_r\ge 1$. Then we have 
\[ \Hom_{\widetilde{K}}(V_\rho,\cP(\fp^+)\otimes V_\sigma\otimes V_\tau)=\Hom_{\widetilde{K}}(V_\rho,\cP(\fp^+)[\det^{-1}]\otimes V_\sigma\otimes V_\tau). \]
Hence, if one of the conditions 
\begin{align*}
&\begin{cases} k_r\ge 1,\; \rK(x)\in\bigl(\cP(\fp^+)\det(x)^{k_r}\otimes\Hom_\BC(V_\rho,V_\sigma\otimes V_\tau)\bigr)^{\widetilde{K}},\\ \Re\lambda+k_r>\frac{r-1}{2},\; \Re\mu+k_r>\frac{r-1}{2},\end{cases} \\
&\begin{cases} k_r\ge 2,\; \rK(x)\in\bigl(\cP(\fp^+)\det(x)^{k_r-1}\otimes\Hom_\BC(V_\rho,V_\sigma\otimes V_\tau)\bigr)^{\widetilde{K}},\\ \Re\lambda+k_r-1>\frac{r-1}{2},\; \Re\mu+k_r-1>\frac{r-1}{2},\end{cases} 
\\\text{or }
&\begin{cases} k_r=1,\; \rK(x)\in\bigl(\cP(\fp^+)\otimes\Hom_\BC(V_\rho,V_\sigma\otimes V_\tau)\bigr)^{\widetilde{K}},\\ \Re\lambda>\frac{r-1}{2},\; \Re\mu>\frac{r-1}{2},\; \cO(D,V_\rho)\colon\text{irreducible}\end{cases} 
\end{align*}
is satisfied, then 
\[ \cF^{\sigma,\tau}_{\rho,\rK\uparrow}\colon\cO(D,V_\rho)\longrightarrow
\cO(D\times D,V_\sigma\otimes V_\tau) \]
defined by (\ref{formula_mainthm}) intertwines the $\widetilde{G}$-action. 
\item Suppose $V_\rho=\chi^{-\lambda-\mu}\otimes V_{2\bk}^\vee$ with $\bk\in\BZ_{++}^r$, $k_1\ge 1$, $k_r=0$. Then we have 
\[ \Hom_{\widetilde{K}}(V_\rho,\cP(\fp^+)\otimes V_\sigma\otimes V_\tau)\subsetneq\Hom_{\widetilde{K}}(V_\rho,\cP(\fp^+)[\det^{-1}]\otimes V_\sigma\otimes V_\tau). \]
Hence, if $\Re\lambda>\frac{r-1}{2}$ and $\Re\mu>\frac{r-1}{2}$ hold, then 
\[ \cF^{\sigma,\tau}_{\rho,\rK\uparrow}\colon\cO(D,V_\rho)\longrightarrow
\cO((D\times D)^\times,V_\sigma\otimes V_\tau) \]
defined by (\ref{formula_mainthm}) intertwines the $\widetilde{G}$-action, but the image is not contained in $\cO(D\times D,V_\sigma\otimes V_\tau)$ in general. 
\end{enumerate}
\end{example}

\begin{remark}
The holographic operators $\cF^{\sigma,\tau}_{\rho,\rK\uparrow}$ in Theorem \ref{main_thm} and $\tilde{\cF}^{\sigma,\tau}_{\rho,\rK\uparrow}$ in Theorem \ref{thm_int_expr_D} are in general not proportional even if these are constructed from a common $\rK(x)$, unless the multiplicities of $\cO(D,V_\rho)$ in $\cO(D\times D, V_\sigma \otimes V_\tau)$ and $\cO((D\times D)^\times, V_\sigma \otimes V_\tau)$ are both one. 
\end{remark}

In Sections \ref{subsection_singleval}--\ref{subsection_hol}, we give a proof of Theorem \ref{main_thm}. 
We extend $\hat{\sigma}$ to a representation on $L^\infty_{\mathrm{loc}}(D,V_\sigma)$ by the same formula. 
It is enough to verify 
\begin{enumerate}
\item The integrand of $\cF^{\sigma,\tau}_{\rho,\rK\uparrow}$ is single-valued, 
\item $\cF^{\sigma,\tau}_{\rho,\rK\uparrow}\colon L^\infty_{\mathrm{loc}}(D,V_\rho)\to L^\infty_{\mathrm{loc}}(D\times D,V_\sigma\otimes V_\tau)\det(x-y)^{m-\lceil(\lambda_1+\mu_1-\lambda_r-\mu_r)/2\rceil}$ is continuous, 
\item $(\hat{\sigma}\hotimes\hat{\tau})(g)\circ\cF^{\sigma,\tau}_{\rho,\rK\uparrow}=\cF^{\sigma,\tau}_{\rho,\rK\uparrow}\circ\hat{\rho}(g)$ on $L^\infty_{\mathrm{loc}}$, 
\item $\cF^{\sigma,\tau}_{\rho,\rK\uparrow}(\cO(D,V_\rho))\subset\cO(D\times D,V_\sigma\otimes V_\tau)\det(x-y)^{m-\lceil(\lambda_1+\mu_1-\lambda_r-\mu_r)/2\rceil}$. 
\end{enumerate}
After that, we give a proof of Theorem \ref{main_thm2} in Section \ref{subsection_thm2}.

\subsection{Proof of single-valuedness of the integrand}\label{subsection_singleval}

In this subsection, we prove that the integrand of $\cF^{\sigma,\tau}_{\rho,\rK\uparrow}$ is single-valued. 

\begin{lemma}
Let $(\sigma,V_\sigma),(\tau,V_\tau)$ be irreducible representations of $\widetilde{K}$, and let 
\[ X:=\{(x,w,y)\in D\times D\times D\mid \det(x-y)\ne0,\; w\in C(x,y)\}. \]
Then the functions 
\begin{align*}
X&\longrightarrow GL_\BC(V_\sigma), & (x,w,y)&\longmapsto \sigma(P(x-y)P(w-y)^{-1}),\\
X&\longrightarrow GL_\BC(V_\tau), & (x,w,y)&\longmapsto \tau(P(x-y)P(x-w)^{-1}),\\
X&\longrightarrow \BC^\times, & (x,w,y)&\longmapsto \det(x-y)^{\frac{n}{r}}\det(w-y)^{-\frac{n}{r}}\det(x-w)^{-\frac{n}{r}}
\end{align*}
are single-valued. 
\end{lemma}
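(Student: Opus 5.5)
The plan is to reduce single-valuedness to a topological statement about the parameter space $X$. Each function is obtained by composing a map into $\widetilde{K}^\BC$ with $\sigma$ or $\tau$, or with a power of $\chi$. The maps into $\widetilde{K}^\BC$ are $(x,w,y)\mapsto P(x-y)P(w-y)^{-1}$, $(x,w,y)\mapsto P(x-y)P(x-w)^{-1}$, and the corresponding expression for $\chi^{n/r}$. A priori each is defined only on $K^\BC$, because $P$ lifts only to $\widetilde{\fp}^{+\times}\to\widetilde{K}^\BC$. So I will show that the underlying $K^\BC$-valued maps $X\to K^\BC$ lift continuously to $\widetilde{K}^\BC$. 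The cleanest route is to show that $X$ is connected and simply connected, or more weakly that every loop in $X$ is null-homotopic after projection. It then suffices to fix the branch at one base point.

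\textbf{Step 1: describe $X$.} Let $X_0:=\{(u,z)\mid u\in\Omega\cap D,\ z\in\Omega\cap(u-\Omega)\}$. By the definition (\ref{formula_contour}) and Proposition \ref{prop_welldef_contour}, the map $G\times X_0\to X$, $(g,u,z)\mapsto(g.u,g.z,g.0)$, is surjective. Its fibres are $K_L$-orbits, by Lemma \ref{lem_KL}. Since $\Omega\cap(u-\Omega)$ is convex, $X_0$ is contractible. Hence $X\simeq G\times_{K_L}X_0$ fibres over $G/K_L$ with contractible fibre, and this fibration is not in general simply connected. Therefore, rather than proving $\pi_1(X)=1$, I will use the equivariance in Lemma \ref{lem_cov_det}.

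\textbf{Step 2: use covariance to transport to the base slice.} By Lemma \ref{lem_cov_det}(2),
\[ P(g.x-g.y)P(g.w-g.y)^{-1}=\kappa(g,y)P(x-y)P(w-y)^{-1}\kappa(g,y)^{-1}. \]
The quantity $\kappa(g,y)$ lifts canonically to $\widetilde{K}^\BC$ on $\widetilde{G}\times D$, and conjugation by it is well defined on $\widetilde{K}^\BC$. Thus
\[ \sigma\bigl(P(g.x-g.y)P(g.w-g.y)^{-1}\bigr)=\sigma(\kappa(g,y))\,\sigma\bigl(P(x-y)P(w-y)^{-1}\bigr)\,\sigma(\kappa(g,y))^{-1}, \]
which is a consistent definition for $g\in\widetilde{G}$. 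On the slice $y=0$, $x=u\in\Omega$, $w=z\in\Omega\cap(u-\Omega)$, the elements $P(u)$, $P(z)$, and in the second case also $P(u-z)$, lie in $\Ad(L)$ and are positive on $\fn^+$. They therefore lie in the image of the simply connected domain $\Omega\times\Omega$, so $P(u)P(z)^{-1}$ has a canonical lift, namely the one through $\exp$ of real symmetric logarithms. The same reasoning applies to $P(x-y)P(x-w)^{-1}$, using $\kappa(g,x)$ in place of $\kappa(g,y)$ via the second form of Lemma \ref{lem_cov_det}(2).

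\textbf{Step 3: well-definedness of the lift.} Define the lift at $(g.u,g.z,g.0)$ by the formula above, using the canonical slice value. I must check independence of the representative $(g,u,z)$, and also of the lift $\tilde g\in\widetilde{G}$. For $g\mapsto gk$ with $k\in K_L$, the element $P(u)P(z)^{-1}$ is replaced by $kP(u)P(z)^{-1}k^{-1}$. This must agree with conjugation by the canonical lift of $\kappa(k,0)=k$, which holds because $K_L\subset L$ acts on $\Omega$ and preserves the positive real logarithms. For central elements of $\widetilde{G}$ over $1\in G$, conjugation by a central element is trivial. I expect this step to be the main obstacle: one has to make sure that the covering ambiguity of $\kappa(g,y)$ enters only through conjugation and not multiplicatively. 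The paired product structure $P(x-y)P(w-y)^{-1}$, with $\chi$-degree zero on the covering, is exactly what ensures this.

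\textbf{Step 4: the scalar function.} For the scalar function, write it as $\chi^{n/r}$ of $P(x-y)P(w-y)^{-1}P(x-w)^{-1}$, up to a branch. Then apply the identity (\ref{formula_det_inv}) together with Lemma \ref{lem_cov_inv}, which show that it equals $\det(v)^{-n/r}$ with $v=((w-y)^{-1}+(x-w)^{-1})^{-1}$. Under the action of $g$, $v$ transforms by $\kappa(g,w)$, and on the slice $v$ lies in $\Omega$. The same transport argument then gives single-valuedness, with the branch fixed positive on the slice.
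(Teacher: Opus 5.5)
\textbf{The covariance identity in Step 2 is false, and your Step 3 rationale rests on it.} Lemma \ref{lem_cov_det}(2) gives two factorizations of each $P(\cdot-\cdot)$. Using the first form for both factors, the ${}^t\hspace{-1pt}\kappa(g,y)$'s cancel in the middle:
\[ P(g.x-g.y)P(g.w-g.y)^{-1}=\kappa(g,x)\,P(x-y)P(w-y)^{-1}\,\kappa(g,w)^{-1}. \]
Using the second form gives
\[ P(g.x-g.y)P(g.x-g.w)^{-1}=\kappa(g,y)\,P(x-y)P(x-w)^{-1}\,\kappa(g,w)^{-1}. \]
No choice of forms turns either of these into a conjugation by $\kappa(g,y)$ or by $\kappa(g,x)$. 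Rank one already refutes your formula. For $\fp^+=\BC$, $P(x)$ is multiplication by $x^2$, and for $g.x=\frac{ax+b}{cx+d}$ one has
\[ \frac{g.x-g.y}{g.w-g.y}=\frac{x-y}{w-y}\cdot\frac{cw+d}{cx+d}. \]
Your identity would instead assert invariance of $\bigl(\frac{x-y}{w-y}\bigr)^2$. So the $\kappa$'s enter multiplicatively, on opposite sides and at different base points. The mechanism you name as the main obstacle in Step 3 (that a $\chi$-degree-zero product forces the covering ambiguity to appear only as a conjugation) is not what actually happens.

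\textbf{How to repair it.} The architecture survives, for a different reason: $\kappa$ is already single-valued on $\widetilde{G}\times D$. Define the value at $(\tilde g.u,\tilde g.z,\tilde g.0)$ as
\[ \sigma(\kappa(\tilde g,u))\,\sigma(S(u,z))\,\sigma(\kappa(\tilde g,z))^{-1}, \]
where $S(u,z)$ is your canonical real-logarithm lift of $P(u)P(z)^{-1}$. For $\tau$, use $\kappa(\tilde g,0)$ on the left instead. Independence of the representative then follows from the cocycle relation. If $\tilde k\in\widetilde K$ lies over $K_L$, or is a central element of $\widetilde G$ over $1$, then
\[ \kappa(\tilde g\tilde k,u)=\kappa(\tilde g,k.u)\tilde k,\qquad \kappa(\tilde g\tilde k,z)=\kappa(\tilde g,k.z)\tilde k,\qquad \kappa(\tilde g\tilde k,0)=\kappa(\tilde g,0)\tilde k. \]
The same $\tilde k$ thus appears on both sides and acts on the slice value by conjugation. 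At that point your $K_L$ check (Jordan automorphisms preserve positive logarithms) and the fact that $\sigma$ of a central element is a scalar finish the argument.

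\textbf{Comparison with the paper.} After this fix your proof is a variant of the paper's. The paper avoids both the quotient by $K_L$ and any discussion of central elements by using the global slice $\exp(\fp)\times X_1\cong X$. On that slice $\kappa(p,\cdot)$ is single-valued because $\exp(\fp)\times D$ is simply connected. It then handles the $K$-direction by factoring through $\Ad_G(K)\times X_2$, writing $\sigma=\chi^{-\lambda}\otimes\sigma_0$ with $\sigma_0$ a genuine $K$-representation. Your Step 4 for the scalar function is correct as written. Lemma \ref{lem_cov_inv} really does involve only $\kappa(g,w)$, and the resulting factor $\chi(\kappa(g,w))^{-2n/r}$ has integer exponent and is trivial on $K_L$.
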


\begin{proof}
First, let 
\begin{align*}
X_1:\hspace{-3pt}&=\{(x,w)\in D\times D\mid \det(x)\ne0,\; w\in C(x,0)\} \\
&=\{(k.u,k.z)\in D\times D\mid k\in K,\; u,z,u-z\in\Omega\}, \\
X_2:\hspace{-3pt}&=X_1\cap(\Omega\times\Omega)=\{(u,z)\in \Omega\times\Omega\mid u-z\in\Omega\}. 
\end{align*}
Then since $\exp(\fp)\simeq G/K\simeq D$, we have the diffeomorphism 
\begin{gather*}
\exp(\fp)\times X_1\longrightarrow X, \qquad
(p,x,w)\longmapsto (p.x,p.w,p.0). 
\end{gather*}
Hence it suffices to show that the functions 
\begin{align}
&\exp(\fp)\times X_1\longrightarrow GL_\BC(V_\sigma), \qquad (p,x,w)\longmapsto \sigma(P(p.x-p.0)P(p.w-p.0)^{-1}),\notag\\
&\exp(\fp)\times X_1\longrightarrow GL_\BC(V_\tau), \qquad (p,x,w)\longmapsto \tau(P(p.x-p.y)P(p.x-p.w)^{-1}),\label{formula_singleval1}\\
&\exp(\fp)\times X_1\longrightarrow \BC^\times, \quad (p,x,w)\longmapsto \det(p.x-p.0)^{\frac{n}{r}}\det(p.w-p.0)^{-\frac{n}{r}}\det(p.x-p.w)^{-\frac{n}{r}} \notag
\end{align}
are single-valued. Indeed, for $p\in\exp(\fp)$ and for $(x,w)\in\widetilde{X}_1$ in the universal covering space of $X_1$, by Lemma \ref{lem_cov_det}, we have 
{\setlength{\belowdisplayskip}{0pt}
\begin{align*}
\sigma(P(p.x-p.0)P(p.w-p.0)^{-1})&=\sigma\bigl(\kappa(p,x)P(x-0){}^t\hspace{-1pt}\kappa(p,0)(\kappa(p,w)P(w-0){}^t\hspace{-1pt}\kappa(p,0))^{-1}\bigr) \\*
&=\sigma(\kappa(p,x))\sigma(P(x)P(w)^{-1})\sigma(\kappa(p,w)^{-1}), \\
\tau(P(p.x-p.0)P(p.x-p.w)^{-1})&=\tau\bigl(\kappa(p,0)P(x-0){}^t\hspace{-1pt}\kappa(p,x)(\kappa(p,w)P(x-w){}^t\hspace{-1pt}\kappa(p,x))^{-1}\bigr) \\*
&=\tau(\kappa(p,0))\tau(P(x)P(x-w)^{-1})\tau(\kappa(p,w)^{-1}), 
\end{align*}}
{\setlength{\abovedisplayskip}{0pt}
\begin{align}
&\det(p.x-p.0)^{\frac{n}{r}}\det(p.w-p.0)^{-\frac{n}{r}}\det(p.x-p.w)^{-\frac{n}{r}} \notag\\*
&=\biggl(\frac{\chi(\kappa(p,x))\chi(\kappa(p,0))\det(x-0)}{\chi(\kappa(p,w))\chi(\kappa(p,0))\det(w-0)\chi(\kappa(p,x))\chi(\kappa(p,w))\det(x-w)}\biggr)^{\frac{n}{r}} \notag\\*
&=\chi(\kappa(p,w))^{-\frac{2n}{r}}
\biggl(\frac{\det(x)}{\det(w)\det(x-w)}\biggr)^{\frac{n}{r}}. \label{formula_compute_P}
\end{align}}
Then since $\exp(\fp)\times D\times D$ is simply-connected, 
\begin{multline*}
\exp(\fp)\times D\times D\ni (p,x,w) \\
\longmapsto \sigma(\kappa(p,x)),\;\sigma(\kappa(p,w)^{-1}),\;\tau(\kappa(p,0)),\;\tau(\kappa(p,w)^{-1}),\;\chi(\kappa(p,w))^{-\frac{2n}{r}}
\end{multline*}
are single-valued. Hence it suffices to show that the functions 
\begin{equation}
X_1\ni (x,w)
\longmapsto \sigma(P(x)P(w)^{-1}),\; \tau(P(x)P(x-w)^{-1}),\; 
\biggl(\frac{\det(x)}{\det(w)\det(x-w)}\biggr)^{\frac{n}{r}} \label{formula_singleval2}
\end{equation}
are single-valued. To do this, it suffices to verify that the functions 
\begin{multline*}
\Ad_G(K)\times X_2\ni (k,u,z) \\
\longmapsto \sigma(P(k.u)P(k.z)^{-1}),\; \tau(P(k.u)P(k.(u-z))^{-1}),\; 
\biggl(\frac{\det(k.u)}{\det(k.z)\det(k.(u-z))}\biggr)^{\frac{n}{r}} 
\end{multline*}
are single-valued and factor the surjective map 
\[ \Ad_G(K)\times X_2\longrightarrow X_1, \qquad (k,u,z)\longmapsto (k.u,k.z). \]
Indeed, $\sigma,\tau$ are of the form $\sigma=\chi^{-\lambda}\otimes\sigma_0$, $\tau=\chi^{-\mu}\otimes\tau_0$ for some $\lambda,\mu\in\BC$ and for some representations $\sigma_0,\tau_0$ of $K$. 
Then for $(u,z)\in X_2$ and for $k\in \widetilde{K}$ in the universal covering group of $K$, we have 
\begin{align*}
\sigma(P(k.u)P(k.z)^{-1})&=\sigma(kP(u)P(z)^{-1}k^{-1}) \\*
&=\det(u)^{-\lambda}\det(z)^\lambda\sigma_0(kP(u)P(z)^{-1}k^{-1}), \\
\tau(P(k.u)P(k.(u-z))^{-1})&=\tau(kP(u)P(u-z)^{-1}k^{-1}) \\*
&=\det(u)^{-\mu}\det(u-z)^\mu\tau_0(kP(u)P(u-z)^{-1}k^{-1}), \\
\biggl(\frac{\det(k.u)}{\det(k.z)\det(k.(u-z))}\biggr)^{\frac{n}{r}}
&=\biggl(\frac{\chi(k)^2\det(u)}{\chi(k)^4\det(z)\det(u-z)}\biggr)^{\frac{n}{r}} \\*
&=\chi(k)^{-\frac{2n}{r}}\biggl(\frac{\det(u)}{\det(z)\det(u-z)}\biggr)^{\frac{n}{r}}. 
\end{align*}
Since $X_2$ is simply-connected and $\frac{2n}{r}\in\BZ_{>0}$, these are single-valued on $K\times X_2$, 
and since the elements of the kernel of the covering map $K\to \Ad_G(K)$ commute with $P(u)P(z)^{-1}$ and $P(u)P(u-z)^{-1}$, these factor $\Ad_G(K)\times X_2$. 
Also, for $(x,w)\in X_1$, if $(x,w)=(k_1.u_1,k_1.z_1)=(k_2.u_2,k_2.z_2)$ with $(u_j,z_j)\in X_2$, $k_j\in K$, 
then since $k_2^{-1}k_1\in K_L$ holds by Lemma \ref{lem_KL}, 
we have $\chi(k_1)=\chi(k_2)$, $\det(u_1)=\det(u_2)$, $\det(z_1)=\det(z_2)$ and $\det(u_1-z_1)=\det(u_2-z_2)$, and thus the above functions factor $X_1$. 
Hence the functions in (\ref{formula_singleval2}) are single-valued on $X_1$, and so are (\ref{formula_singleval1}). 
\end{proof}

\subsection{Proof of continuity in $L^\infty_{\mathrm{loc}}$}

In this subsection, we prove the following lemma. 

\begin{lemma}\label{lem_conti}
Under the setting of Theorem \ref{main_thm}, the map 
\[ \cF^{\sigma,\tau}_{\rho,\rK\uparrow}\colon L^\infty_{\mathrm{loc}}(D,V_\rho) \longrightarrow L^\infty_{\mathrm{loc}}(D\times D,V_\sigma\otimes V_\tau)\det(x-y)^{m-\lceil(\lambda_1+\mu_1-\lambda_r-\mu_r)/2\rceil} \]
is continuous. 
\end{lemma}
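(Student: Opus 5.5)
Fix a compact set $S\subset D\times D$. Because of Lemma \ref{lem_bdd}, for $(x,y)\in S\cap(D\times D)^\times$ the contour $C(x,y)$ lies in the compact set $S'=\bigcup_{(x,y)\in S}B_y(\dist(x,y))\subset D$. So it suffices to bound, for $f\in L^\infty(S',V_\rho)$,
\[ |\det(x-y)^{-(m-\lceil(\lambda_1+\mu_1-\lambda_r-\mu_r)/2\rceil)}(\cF^{\sigma,\tau}_{\rho,\rK\uparrow}f)(x,y)|\le C_S\Vert f\Vert_{L^\infty(S')} \]
uniformly in $(x,y)\in S\cap(D\times D)^\times$. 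The operator is linear, so this bound is exactly the continuity claimed in the lemma.

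\textbf{Step 1: reduction to a normal form.} I will reduce to $y=0$, $x=u\in\Omega$ by writing $(x,y)=(g.u,g.0)$ with $g\in\exp(\fp)$ and $u\in\Omega\cap D$. The cocycle formulas of Lemma \ref{lem_cov_det}, Lemma \ref{lem_cov_inv} and (\ref{formula_compute_P}) transform the integrand. The resulting factors are $\sigma(\kappa(p,\cdot))$, $\tau(\kappa(p,\cdot))$, $\chi(\kappa(p,w))^{-2n/r}$ (the Jacobian of $w\mapsto g.w$ cancels against it) and $\rho(\kappa(g,w))$. All of these are bounded above and below on compacta, and $g$ ranges over a compact set when $(x,y)\in S$. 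For the same reason $|\det(x-y)|$ and $|\det(u)|$ are comparable. The problem is then to bound, for $u\in\Omega$ in a compact subset of $\overline{\Omega\cap D}$ (with $\det u$ possibly tending to $0$),
\[ \det(u)^{\frac{n}{r}}\int_{\Omega\cap(u-\Omega)}\det(z)^{-\frac nr}\det(u-z)^{-\frac nr}\bigl\Vert\sigma(P(u)P(z)^{-1})\otimes\tau(P(u)P(u-z)^{-1})\rK((z^{-1}+(u-z)^{-1})^{-1})\bigr\Vert\,dz . \]
The bound must be $O(|\det u|^{m-\lceil(\lambda_1+\mu_1-\lambda_r-\mu_r)/2\rceil})$.

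\textbf{Step 2: pointwise estimates of the integrand.} Use $L$-equivariance (the group $A_LN_L$ acting on $\Omega$), or diagonalise $u$ by $K_L$ together with a scaling. Write $\sigma=\chi^{-\lambda_r}\otimes\sigma_0$ with $\sigma_0$ polynomial; its highest exponent relative to $\chi$ is governed by $\lambda_1-\lambda_r$. This gives
\[ \Vert\sigma(P(u)P(z)^{-1})\Vert\lesssim \Bigl(\frac{\det z}{\det u}\Bigr)^{\Re\lambda_r}\,\Vert P(u)^{-1}P(z)\Vert^{(\lambda_1-\lambda_r)/2}\ \text{-type bounds}, \]
and similarly for $\tau$. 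For $\rK$: the element $v=(z^{-1}+(u-z)^{-1})^{-1}$ has determinant $\det z\det(u-z)/\det u$ by (\ref{formula_det_inv}), and it lies in $\Omega$. Since $\rK\in\cP(\fp^+)\det^m$, we get $|\rK(v)|\lesssim\det(v)^m$ times a polynomial bound, while $v\le z$ and $v\le u-z$ in the cone order. Collecting exponents, the integrand is dominated by
\[ \det(z)^{\Re\lambda_r+m-\frac nr}\det(u-z)^{\Re\mu_r+m-\frac nr}\det(u)^{-\Re\lambda_r-\Re\mu_r-m+\dots}, \]
multiplied by the non-scalar corrections.

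\textbf{Step 3: Riesz-type integral and the loss of $\lceil\cdot\rceil$.} The scalar model is the Beta integral over $\Omega\cap(u-\Omega)$:
\[ \int_{\Omega\cap(u-\Omega)}\det(z)^{a-\frac nr}\det(u-z)^{b-\frac nr}dz=\frac{\Gamma_\Omega(a)\Gamma_\Omega(b)}{\Gamma_\Omega(a+b)}\det(u)^{a+b-\frac nr}. \]
It converges exactly when $\Re a,\Re b>\frac nr-1$, and this is where the hypotheses $\Re\lambda_r+m,\Re\mu_r+m>\frac nr-1$ enter. Taking $a=\lambda_r+m$ and $b=\mu_r+m$ makes the power of $\det u$ come out as $m$.

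\textbf{Main obstacle.} The hard part is controlling the non-scalar parts $\sigma_0,\tau_0$ and the non-determinantal part of $\rK$ when $u$ degenerates toward the boundary of $\Omega$. Their norms involve $\Vert P(u)^{-1}\Vert$, which blows up like $\det(u)^{-1}$ in the worst direction. This is what produces the loss factor $\det(x-y)^{-\lceil(\lambda_1+\mu_1-\lambda_r-\mu_r)/2\rceil}$. I would first treat the case where $u=\sum t_je_j$ is diagonal, dominating each weight component by products of the $t_j$. If that bookkeeping is awkward, I would fall back on the cruder operator-norm bound $\Vert P(u)^{-1}\Vert\le\det(u)^{-1}\Vert u\Vert^{r-1}$.
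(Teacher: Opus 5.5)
Your skeleton is the same as the paper's: the compact hull of the contours from Lemma \ref{lem_bdd}, transport of $(x,y)$ to $(u,0)$ via Lemmas \ref{lem_cov_det} and \ref{lem_cov_inv}, boundedness of $\det^{-m}\rK$ at $(z^{-1}+(u-z)^{-1})^{-1}\in\Omega\cap(z-\Omega)\subset D$, and convergence from $\Re\lambda_r+m,\Re\mu_r+m>\frac{n}{r}-1$. (In Step 1 you need $g=p_yk$ with $k\in K$; a $g\in\exp(\fp)$ with $g.0=y$ does not in general give $g^{-1}.x\in\Omega$.)

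The gap is the step that carries the content of the lemma. You must show that the non-scalar parts of $\sigma,\tau$ cost only $\det(u)^{-\lceil(\lambda_1+\mu_1-\lambda_r-\mu_r)/2\rceil}$ while leaving a finite $z$-integral. You leave this as the ``main obstacle'', and the tools you offer for it fail.

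Your Step 2 bound $\Vert\sigma(P(u)P(z)^{-1})\Vert\lesssim(\det z/\det u)^{\Re\lambda_r}\Vert P(u)^{-1}P(z)\Vert^{(\lambda_1-\lambda_r)/2}$ cannot hold uniformly in $u$. Write $g=P(u)P(z)^{-1}=k_1P(s)k_2$ with $k_i\in K_L$ and $s=\sum_j s_je_j$. Then $\Vert\sigma(g)\Vert=\max_\nu\prod_j s_j^{-\Re\nu_j}$ involves products of several small $s_j$, while $\Vert P(u)^{-1}P(z)\Vert=(\min_j s_j)^{-2}$ sees only the smallest one. Concretely, take $\fp^+=\Sym(3,\BC)$, $\lambda=(1,1,0)$, $u$ with eigenvalues of order $1,\epsilon,\epsilon^2$, and suitable $z$ in an open subset of $\Omega\cap(u-\Omega)$. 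Then two of the $s_j$ are of order $\epsilon^{1/2}$, so the left side is of order $\epsilon^{-1}$ and the right side of order $\epsilon^{-1/2}$.

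The fallback does not rescue this. In the worst direction $\Vert P(u)^{-1}\Vert$ grows like $\det(u)^{-2}$, so your inequality $\Vert P(u)^{-1}\Vert\le\det(u)^{-1}\Vert u\Vert^{r-1}$ is off by a square. Even granting Step 2, the corrected bound gives a loss of order $\det(u)^{-(\lambda_1-\lambda_r)-(\mu_1-\mu_r)}$. That exceeds the allowed loss whenever $\lambda_1+\mu_1-\lambda_r-\mu_r\ge 2$.

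What is missing is the paper's exact conjugation.

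\begin{itemize}
\item Write $u=k.u'$ with $k\in K$ and $u'=\sum_j u_je_j$, $1>u_1\ge\cdots\ge u_r>0$.
\item Substitute $z'=P(u'^{1/2})z$ with $z\in\Omega\cap(e-\Omega)$. Then $\sigma(P(u')P(z')^{-1})=\sigma(P(u'^{1/2}))\,\sigma(P(z)^{-1})\,\sigma(P(u'^{-1/2}))$, and likewise for $\tau$ with $e-z$.
\item The determinant powers and the Jacobian $\det(u')^{n/r}$ collapse to $\det(u')^{m}$.
\item All remaining dependence on $u$ sits in the condition number
\[ \Vert(\sigma\otimes\tau)(P(u'^{1/2}))\Vert\,\Vert(\sigma\otimes\tau)(P(u'^{-1/2}))\Vert=\prod_j u_j^{\Re(\lambda_j+\mu_j-\lambda_{r-j+1}-\mu_{r-j+1})/2}. \]
Only restricted weights enter, because $P(u'^{\pm1/2})\in A_L$.
\item Multiplied by $\det(u')^{\lceil(\lambda_1+\mu_1-\lambda_r-\mu_r)/2\rceil}$, this is $\le 1$, since every resulting exponent is $\ge 0$ and $u_j<1$.
\item What is left is a fixed integral over $\Omega\cap(e-\Omega)$. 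It is finite by polar coordinates, because $\Vert\sigma(P(a)^{-1})\otimes\tau(P(e-a)^{-1})\Vert=\prod_j a_j^{\Re\lambda_j}(1-a_{r-j+1})^{\Re\mu_j}$. This is dominated by your scalar Beta integrand with exponents $\lambda_r+m$, $\mu_r+m$.
\end{itemize}

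Your phrase ``diagonalise $u$ \dots together with a scaling'' points in this direction. The estimate has to be done through this exact conjugation, not through pointwise norm comparisons on $\fp^+$.
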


\begin{proof}
Let $m-\lceil(\lambda_1+\mu_1-\lambda_r-\mu_r)/2\rceil=:m'$, let $\det(z)^{-m}\rK(z)=:\rK'(z)$, and let 
\[ C_{\rK'}:=\sup_{z\in D}|\rK'(z)|_{\op,\rho\to\sigma\otimes\tau}. \]
For $y\in D$, let $p_y\in\exp(\fp)\subset G$ be the unique element satisfying $p_y.0=y$. Then $y\mapsto p_y$ is continuous. 
We take an arbitrary relatively compact open set $Y\subset D\times D$, and let $C_{Y,m'}^{\sigma,\tau}, C_Y^\rho>0$ be the constants given by 
\begin{align*}
C_{Y,m'}^{\sigma,\tau}&:=\sup_{(x,y)\in Y}|\chi(\kappa(p_y,p_y^{-1}.x))\chi(\kappa(p_y,0))|^{-m'}
\bigl|\sigma(\kappa(p_y,p_y^{-1}.x))\otimes\tau(\kappa(p_y,0))\bigr|_{\op,\sigma\otimes\tau}, \\
C_Y^\rho&:=\sup_{(x,y)\in Y}\sup_{|w|_\infty\le |p_y^{-1}.x|_\infty}\bigl|\rho(\kappa(p_y,w)^{-1})\bigr|_{\op,\rho}. 
\end{align*}
Also, we set 
\[ Y':=\bigcup_{(x,y)\in Y\cap (D\times D)^\times}C(x,y)\subset D. \]
Then $Y'$ is bounded with respect to the $G$-invariant metric. Indeed, $Y\subset D\times D$ is bounded, 
namely, $Y\subset B_{x_0}(r)\times B_{x_0}(r)$ holds for some $x_0\in D$ and $r>0$, and by Lemma \ref{lem_bdd}, we have 
\[ Y'=\bigcup_{(x,y)\in Y\cap(D\times D)^\times}C(x,y)\subset \bigcup_{x,y\in B_{x_0}(r)} B_y(\dist(x,y))
\subset \bigcup_{y\in B_{x_0}(r)} B_y(2r)\subset B_{x_0}(3r). \]
Hence $Y'$ is relatively compact in $D$. For $f\in\cO(D,V_\rho)$, we set 
\[ \Vert f\Vert_{Y',\rho}:=\sup_{w\in Y'}|f(w)|_\rho. \]
Then for $(x,y)\in Y\cap(D\times D)^\times$, by putting $u:=p_y^{-1}.x\in D$, we have 
\begin{align*}
&|\det(x-y)^{-m'}(\cF^{\sigma,\tau}_{\rho,\rK\uparrow} f)(x,y)|_{\sigma\otimes\tau} \\
&\le |\det(x-y)|^{-m'+\frac{n}{r}}\int_{C(x,y)} |\det(w-y)|^{-\frac{n}{r}}|\det(x-w)|^{-\frac{n}{r}} \\*
&\eqspace{}\times \bigl|(\sigma\otimes\tau)(P(x-y))\bigl(\sigma(P(w-y)^{-1})\otimes \tau(P(x-w)^{-1})\bigr) \\*
&\eqspace{}\times\rK\bigl(((w-y)^{-1}+(x-w)^{-1})^{-1}\bigr)\bigr|_{\op,\rho\to\sigma\otimes\tau} |f(w)|_\rho\,|dw| \\
&\le \Vert f\Vert_{Y',\rho}|\det(p_y.u-p_y.0)|^{-m'+\frac{n}{r}}\int_{C(u,0)} |\det(p_y.z-p_y.0)|^{-\frac{n}{r}}|\det(p_y.u-p_y.z)|^{-\frac{n}{r}} \\*
&\eqspace{}\times \bigl|\bigl(\sigma(P(p_y.u-p_y.0)P(p_y.z-p_y.0)^{-1})\otimes \tau(P(p_y.u-p_y.0)P(p_y.u-p_y.z)^{-1})\bigr) \\*
&\eqspace{}\times \rK\bigl(((p_y.z-p_y.0)^{-1}+(p_y.u-p_y.z)^{-1})^{-1}\bigr)\bigr|_{\op,\rho\to\sigma\otimes\tau}|\chi(\kappa(p_y,z))|^{\frac{2n}{r}} \,|dz|. 
\end{align*}
Then by Lemmas \ref{lem_cov_det}, \ref{lem_cov_inv}, as in (\ref{formula_compute_P}), we have 
\begin{gather*}
P(p_y.u-p_y.0)P(p_y.z-p_y.0)^{-1}
=\kappa(p_y,u)P(u)P(z)^{-1}\kappa(p_y,z)^{-1}, \\
P(p_y.u-p_y.0)P(p_y.u-p_y.z)^{-1}
=\kappa(p_y,0)P(u)P(z-u)^{-1}\kappa(p_y,z)^{-1}, \\
\begin{split}
&\det(p_y.u-p_y.0)^{-1}\det(p_y.z-p_y.0)\det(p_y.u-p_y.z) \\
&=\det(u)^{-1}\det(z)\det(u-z)\chi(\kappa(p_y,z))^2, 
\end{split}\\
\begin{split}
&\bigl(\sigma(\kappa(p_y,z)^{-1})\otimes\tau(\kappa(p_y,z)^{-1})\bigr)
\rK\bigl(((p_y.z-p_y.0)^{-1}+(p_y.u-p_y.z)^{-1})^{-1}\bigr) \\
&=\bigl(\sigma(\kappa(p_y,z)^{-1})\otimes\tau(\kappa(p_y,z)^{-1})\bigr)
\rK\bigl(\kappa(p_y,z).((z-0)^{-1}+(u-z)^{-1})^{-1}\bigr) \\
&=\rK\bigl((z^{-1}+(u-z)^{-1})^{-1}\bigr)\rho(\kappa(p_y,z)^{-1}). 
\end{split}
\end{gather*}
Hence we have 
\begin{align*}
&|\det(x-y)^{-m'}(\cF^{\sigma,\tau}_{\rho,\rK\uparrow} f)(x,y)|_{\sigma\otimes\tau} \\
&\le \Vert f\Vert_{Y',\rho}|\chi(\kappa(p_y,u))\chi(\kappa(p_y,0))|^{-m'}|\det(u)|^{-m'+\frac{n}{r}}\int_{C(u,0)} |\det(z)|^{-\frac{n}{r}}|\det(u-z)|^{-\frac{n}{r}} \\*
&\eqspace{}\times \bigl|\bigl(\sigma(\kappa(p_y,u)P(u)P(z)^{-1})\otimes \tau(\kappa(p_y,0)P(u)P(u-z)^{-1})\bigr) \\*
&\eqspace{}\times\rK\bigl((z^{-1}+(u-z)^{-1})^{-1}\bigr)\rho(\kappa(p_y,z)^{-1})\bigr|_{\op,\rho\to\sigma\otimes\tau} \,|dz| \\
&\le C_{Y,m'}^{\sigma,\tau}C_Y^\rho \Vert f\Vert_{Y',\rho}|\det(u)|^{-m'+\frac{n}{r}}\int_{C(u,0)} |\det(z)|^{-\frac{n}{r}}|\det(u-z)|^{-\frac{n}{r}} \\*
&\eqspace{}\times \bigl|\bigl(\sigma(P(u)P(z)^{-1})\otimes \tau(P(u)P(u-z)^{-1})\bigr)
\rK\bigl((z^{-1}+(u-z)^{-1})^{-1}\bigr)\bigr|_{\op,\rho\to\sigma\otimes\tau} \,|dz| \\
&=C_{Y,m'}^{\sigma,\tau}C_Y^\rho \Vert f\Vert_{Y',\rho}|\det(u)|^{-m-m'+\frac{n}{r}}\int_{C(u,0)} |\det(z)|^{m-\frac{n}{r}}|\det(u-z)|^{m-\frac{n}{r}} \\*
&\eqspace{}\times \bigl|\bigl(\sigma(P(u)P(z)^{-1})\otimes \tau(P(u)P(u-z)^{-1})\bigr)
\rK'\bigl((z^{-1}+(u-z)^{-1})^{-1}\bigr)\bigr|_{\op,\rho\to\sigma\otimes\tau} \,|dz|, 
\end{align*}
where at the last equality, we have used the special case of (\ref{formula_det_inv}), 
\begin{equation}\label{formula_det_inv2}
\det\bigl((z^{-1}+(u-z)^{-1})^{-1}\bigr)=\det(z)\det(u)^{-1}\det(u-z). 
\end{equation}
Next, for a fixed Jordan frame $\{e_1,\ldots,e_r\}\subset\fn^+$, let 
\[ \fa^+_{++}:=\{a_1e_1+\cdots+a_re_r\in\fn^+\mid a_j\in\BR,\;a_1\ge \cdots\ge a_r>0\}\subset\Omega, \]
and we take $k\in K$ such that $u':=k^{-1}.u\in\fa^+_{++}\subset\Omega$ holds. Then for $z\in C(u,0)$, we have $z':=k^{-1}.z\in C(u',0)=\Omega\cap(u'-\Omega)$. 
Since 
\begin{align*}
z'-(z^{\prime-1}+(u'-z')^{-1})^{-1}&=P(z^{\prime 1/2})\bigl(e-(e+(P(z^{\prime -1/2})u-e)^{-1})^{-1}\bigr) \\
&=P(z^{\prime 1/2})(P(z^{\prime -1/2})u')^{-1}=P(z')u^{\prime-1}\in\Omega 
\end{align*}
holds, we have 
\[ (z^{-1}+(u-z)^{-1})^{-1}=k.(z^{\prime -1}+(u'-z')^{-1})^{-1}\in k.(\Omega\cap(z'-\Omega))\subset D, \]
so that 
\[ \bigl|\rK'\bigl((z^{-1}+(u-z)^{-1})^{-1}\bigr)\bigr|_{\op,\rho\to\sigma\otimes\tau}
\le \sup_{w\in D}|\rK'(w)|_{\op,\rho\to\sigma\otimes\tau}=C_{\rK'} \]
holds. Therefore, for $(x,y)\in Y\cap(D\times D)^\times$, we get 
\begin{align*}
&|\det(x-y)^{-m'}(\cF^{\sigma,\tau}_{\rho,\rK\uparrow} f)(x,y)|_{\sigma\otimes\tau} \\
&\le C_{Y,m'}^{\sigma,\tau}C_Y^\rho C_{\rK'} \Vert f\Vert_{Y',\rho}|\det(k.u')|^{-m-m'+\frac{n}{r}}\int_{\Omega\cap(u'-\Omega)} |\det(k.z')|^{m-\frac{n}{r}}|\det(k.u'-k.z')|^{m-\frac{n}{r}} \\*
&\eqspace{}\times \bigl|\sigma(kP(u')P(z')^{-1}k^{-1})\otimes \tau(kP(u')P(u'-z')^{-1}k^{-1})\bigr|_{\op,\sigma\otimes\tau} \,dz' \\
&=C_{Y,m'}^{\sigma,\tau}C_Y^\rho C_{\rK'} \Vert f\Vert_{Y',\rho}\det(u')^{-m-m'+\frac{n}{r}}\int_{\Omega\cap(u'-\Omega)} \det(z')^{m-\frac{n}{r}}\det(u'-z')^{m-\frac{n}{r}} \\*
&\eqspace{}\times \bigl|\sigma(P(u')P(z')^{-1})\otimes \tau(P(u')P(u'-z')^{-1})\bigr|_{\op,\sigma\otimes\tau} \,dz' \\
&=C_{Y,m'}^{\sigma,\tau}C_Y^\rho C_{\rK'} \Vert f\Vert_{Y',\rho}\det(u')^{-m-m'+\frac{n}{r}}\!\!\int_{\Omega\cap(e-\Omega)} \hspace{-9pt}\det(P(u^{\prime 1/2})z)^{m-\frac{n}{r}}\det(P(u^{\prime 1/2})(e\hspace{-1pt}-\hspace{-1pt}z))^{m-\frac{n}{r}} \\*
&\eqspace{}\times \bigl|\sigma(P(u')P(P(u^{\prime 1/2})z)^{-1})\otimes \tau(P(u')P(P(u^{\prime 1/2})(e-z))^{-1})\bigr|_{\op,\sigma\otimes\tau}\det(u')^{\frac{n}{r}} \,dz \\
&=C_{Y,m'}^{\sigma,\tau}C_Y^\rho C_{\rK'} \Vert f\Vert_{Y',\rho}\det(u')^{m-m'}\int_{\Omega\cap(e-\Omega)} \det(z)^{m-\frac{n}{r}}\det(e-z)^{m-\frac{n}{r}} \\*
&\eqspace{}\times \bigl|\sigma(P(u^{\prime 1/2})P(z)^{-1}P(u^{\prime -1/2}))\otimes \tau(P(u^{\prime 1/2})P(e-z)^{-1}P(u^{\prime -1/2}))\bigr|_{\op,\sigma\otimes\tau} \,dz \\
&\le C_{Y,m'}^{\sigma,\tau}C_Y^\rho C_{\rK'} \Vert f\Vert_{Y',\rho}
\bigl|(\sigma\otimes\tau)(P(u^{\prime 1/2}))\bigr|_{\op,\sigma\otimes\tau}\bigl|(\sigma\otimes\tau)(P(u^{\prime -1/2}))\bigr|_{\op,\sigma\otimes\tau}\det(u')^{m-m'} \\*
&\eqspace{}\times \int_{\Omega\cap(e-\Omega)} \det(z)^{m-\frac{n}{r}}\det(e-z)^{m-\frac{n}{r}}\bigl|\sigma(P(z)^{-1})\otimes \tau(P(e-z)^{-1})\bigr|_{\op,\sigma\otimes\tau} \,dz.  
\end{align*}
Now, if $z=k.a$ with $k\in K_L$, $a=\sum_{j=1}^r a_je_j\in\fa^+_{++}\cap(e-\Omega)$, then we have 
\[ \bigl|\sigma(P(z)^{-1})\otimes \tau(P(e-z)^{-1})\bigr|_{\op,\sigma\otimes\tau}
=\prod_{j=1}^r a_j^{\Re\lambda_j}(1-a_{r-j+1})^{\Re\mu_j}, \]
and by \cite[Theorem VI.2.3]{FK}, if $\Re\lambda_r+m>\frac{n}{r}-1$, $\Re\mu_r+m>\frac{n}{r}-1$, then there exists a constant $C>0$ such that 
\begin{align*}
B_{\sigma,\tau,m}:\hspace{-3pt}&=\int_{\Omega\cap(e-\Omega)} \det(z)^{m-\frac{n}{r}}\det(e-z)^{m-\frac{n}{r}}\bigl|\sigma(P(z)^{-1})\otimes \tau(P(e-z)^{-1})\bigr|_{\op,\sigma\otimes\tau} \,dz \\
&=C\int_{\fa^+_{++}\cap(e-\Omega)}\prod_{j=1}^r a_j^{\Re\lambda_j+m-\frac{n}{r}}(1-a_{r-j+1})^{\Re\mu_j+m-\frac{n}{r}}\prod_{1\le i<j\le r}(a_i-a_j)^d\,da<\infty
\end{align*}
holds. Similarly, for $u'=\sum_{j=1}^ru_je_j\in\fa^+_{++}\cap(e-\Omega)$, 
we have 
\begin{align*}
&\bigl|(\sigma\otimes\tau)(P(u^{\prime 1/2}))\bigr|_{\op,\sigma\otimes\tau}\bigl|(\sigma\otimes\tau)(P(u^{\prime -1/2}))\bigr|_{\op,\sigma\otimes\tau}\det(u')^{m-m'} \\
&=\prod_{j=1}^r u_j^{-\Re(\lambda_{r-j+1}+\mu_{r-j+1})/2}\prod_{j=1}^r u_j^{\Re(\lambda_j+\mu_j)/2}\prod_{j=1}^r u_j^{\lceil\Re(\lambda_1+\mu_1-\lambda_r-\mu_r)/2\rceil}\le 1, 
\end{align*}
since $\Re\lambda_1\ge\cdots\ge \Re\lambda_r$, $\Re\mu_1\ge\cdots\ge \Re\mu_r$ and $u_j\le 1$. Therefore, for $(x,y)\in Y\cap(D\times D)^\times$, we get 
\[ |\det(x-y)^{-m'}(\cF^{\sigma,\tau}_{\rho,\rK\uparrow} f)(x,y)|_{\sigma\otimes\tau}
\le C_{Y,m'}^{\sigma,\tau}C_Y^\rho C_{\rK'} B_{\sigma,\tau,m} \Vert f\Vert_{Y',\rho}, \]
and $\cF^{\sigma,\tau}_{\rho,\rK\uparrow}\colon L^\infty_{\mathrm{loc}}(D,V_\rho)\to L^\infty_{\mathrm{loc}}(D\times D,V_\sigma\otimes V_\tau)\det(x-y)^{m'}$ is continuous. 
\end{proof}

\subsection{Proof of intertwining property on $L^\infty_{\mathrm{loc}}$}

In this subsection, we prove that $\cF^{\sigma,\tau}_{\rho,\rK\uparrow}$ intertwines the $\widetilde{G}$-action on $L^\infty_{\mathrm{loc}}$-spaces. 

\begin{lemma}\label{lem_intertwining}
For $g\in \widetilde{G}$, we have 
$(\hat{\sigma}\hotimes\hat{\tau})(g)\circ\cF^{\sigma,\tau}_{\rho,\rK\uparrow}
=\cF^{\sigma,\tau}_{\rho,\rK\uparrow}\circ\hat{\rho}(g)$ on $L^\infty_{\mathrm{loc}}$. 
\end{lemma}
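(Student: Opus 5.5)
We verify the identity pointwise at $(x,y)\in(D\times D)^\times$ by a change of variables along the contour. Write $g^{-1}$ for a lift in $\widetilde{G}$ and set $x'=g^{-1}.x$, $y'=g^{-1}.y$. By Lemma \ref{lem_cov_det}\,(3), $(x',y')\in(D\times D)^\times$, and by Proposition \ref{prop_welldef_contour}, $C(x,y)=g.C(x',y')$. The left-hand side at $(x,y)$ is
\[ \bigl(\sigma(\kappa(g^{-1},x))^{-1}\otimes\tau(\kappa(g^{-1},y))^{-1}\bigr)(\cF f)(x',y'). \]
We expand $(\cF f)(x',y')$ as an integral over $C(x',y')=g^{-1}.C(x,y)$ and substitute $w'=g^{-1}.w$ with $w\in C(x,y)$. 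Since $\chi$ is the character with $\chi(P(x))=\det x$ and $\Det(P)=\det^{2n/r}$, the holomorphic Jacobian is $dw'=\chi(\kappa(g^{-1},w))^{\frac{2n}{r}}dw$. The orientation is preserved: $g^{-1}$ carries the chosen orientation of $C(x,y)$ to that of $C(x',y')$, by connecting $g$ to the identity (the orientation is defined by transport from $\Omega\cap(u-\Omega)$, and $G$ is connected).

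Next we convert each factor of the integrand using Lemma \ref{lem_cov_det}\,(2), choosing its two equivalent forms so that the $\kappa$-factors land on the correct tensor slot:
\begin{gather*}
P(x'-y')=\kappa(g^{-1},x)P(x-y)\,{}^t\kappa(g^{-1},y)=\kappa(g^{-1},y)P(x-y)\,{}^t\kappa(g^{-1},x),\\
P(w'-y')^{-1}=({}^t\kappa(g^{-1},y))^{-1}P(w-y)^{-1}\kappa(g^{-1},w)^{-1},\\
P(x'-w')^{-1}=({}^t\kappa(g^{-1},x))^{-1}P(x-w)^{-1}\kappa(g^{-1},w)^{-1}.
\end{gather*}
Thus in the $\sigma$-slot $\sigma(P(x'-y')P(w'-y')^{-1})=\sigma(\kappa(g^{-1},x))\,\sigma(P(x-y)P(w-y)^{-1})\,\sigma(\kappa(g^{-1},w))^{-1}$, and similarly in the $\tau$-slot with $\kappa(g^{-1},y)$ in front. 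These outer factors cancel exactly against the prefactor $\sigma(\kappa(g^{-1},x))^{-1}\otimes\tau(\kappa(g^{-1},y))^{-1}$. For the scalar factor, taking $\chi$ of the above identities gives
\[ \det(x'-y')^{\frac nr}\det(w'-y')^{-\frac nr}\det(x'-w')^{-\frac nr}=\chi(\kappa(g^{-1},w))^{-\frac{2n}r}\det(x-y)^{\frac nr}\det(w-y)^{-\frac nr}\det(x-w)^{-\frac nr}, \]
which cancels the Jacobian. For the kernel, Lemma \ref{lem_cov_inv}\,(3) gives $\rK(((w'-y')^{-1}+(x'-w')^{-1})^{-1})=\rK(\kappa(g^{-1},w).Z)$ with $Z=((w-y)^{-1}+(x-w)^{-1})^{-1}$. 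By $\widetilde{K}^\BC$-invariance of $\rK$ (extended holomorphically from $\widetilde K$), this equals $(\sigma\otimes\tau)(\kappa(g^{-1},w))\,\rK(Z)\,\rho(\kappa(g^{-1},w))^{-1}$. The $(\sigma\otimes\tau)(\kappa(g^{-1},w))$ cancels the leftover $\sigma(\kappa(g^{-1},w))^{-1}\otimes\tau(\kappa(g^{-1},w))^{-1}$ from the $P$-factors, and the remaining $\rho(\kappa(g^{-1},w))^{-1}f(g^{-1}.w)=(\hat\rho(g)f)(w)$ is exactly the right-hand side integrand. Fubini and measurability pose no issue, since all integrals converge absolutely by Lemma \ref{lem_conti}.

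The main obstacle is making these identities hold on the universal covers with consistent branches, not just up to a scalar ambiguity. Lemma \ref{lem_cov_det} holds in $K^\BC$, but $\sigma,\tau,\rho$ and the powers $\det^{\pm n/r}$ are only defined through lifts. We handle this by continuity in $g$: both sides are continuous functions of $g\in\widetilde G$ with values in $\Hom(V_\rho,V_\sigma\otimes V_\tau)$-valued integrands, and they agree at $g=e$. The discrepancy is a multiplicative central factor in a discrete kernel that varies continuously, so it stays equal to $1$ on the connected group $\widetilde G$. Alternatively, we check the identity on the generators $\exp(\fp)$ and $\widetilde K$ and use the cocycle identity. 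Single-valuedness from the previous subsection ensures these lifted expressions are well-defined on $X$. The $T_\Omega$ version then follows identically with $g_c$ in place of $g$, using the second formulas of Lemmas \ref{lem_cov_det}\,(2) and \ref{lem_cov_inv}\,(3).
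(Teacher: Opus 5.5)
Your argument is correct and is the same computation as the paper's proof: substitute along the contour using $C(g.x,g.y)=g.C(x,y)$ with Jacobian $\chi(\kappa)^{2n/r}$, then cancel all the $\kappa$-factors via Lemmas~\ref{lem_cov_det} and \ref{lem_cov_inv} and the $\widetilde{K}^\BC$-equivariance of $\rK$. The paper writes it for $(\hat{\sigma}\hotimes\hat{\tau})(g^{-1})$ rather than $(\hat{\sigma}\hotimes\hat{\tau})(g)$, and your extra remarks on consistent lifts to the universal covers and on orientation cover points the paper leaves implicit without changing the route.
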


\begin{proof}
By Lemmas \ref{lem_cov_det}, \ref{lem_cov_inv}, we get 
\begin{align*}
&(((\hat{\sigma}\hotimes\hat{\tau})(g^{-1})\circ\cF^{\sigma,\tau}_{\rho,\rK\uparrow})f)(x,y) \\
&=\bigl(\sigma(\kappa(g,x))^{-1}\otimes\tau(\kappa(g,y))^{-1}\bigr)\det(g.x-g.y)^{\frac{n}{r}}\bigl(\sigma(P(g.x-g.y))\otimes\tau(P(g.x-g.y))\bigr) \\*
&\eqspace{}\times \int_{C(g.x,g.y)} \det(w-g.y)^{-\frac{n}{r}}\det(g.x-w)^{-\frac{n}{r}}\bigl(\sigma(P(w-g.y)^{-1})\otimes\tau(P(g.x-w)^{-1})\bigr) \\*
&\eqspace{}\times \rK\bigl(((w-g.y)^{-1}+(g.x-w)^{-1})^{-1}\bigr) f(w)\,dw \\
&=\chi(\kappa(g,x))^{\frac{n}{r}}\chi(\kappa(g,y))^{\frac{n}{r}}\det(x-y)^{\frac{n}{r}}\bigl(\sigma(P(x-y){}^t\hspace{-1pt}\kappa(g,y))\otimes\tau(P(x-y){}^t\hspace{-1pt}\kappa(g,x))\bigr) \\*
&\eqspace{}\times \int_{C(x,y)} \det(g.z-g.y)^{-\frac{n}{r}}\det(g.x-g.z)^{-\frac{n}{r}}\bigl(\sigma(P(g.z-g.y)^{-1})\otimes\tau(P(g.x-g.z)^{-1})\bigr) \\*
&\eqspace{}\times \rK\bigl(((g.z-g.y)^{-1}+(g.x-g.z)^{-1})^{-1}\bigr) f(g.z)\chi(\kappa(g,z))^{\frac{2n}{r}}\,dz \\
&=\det(x-y)^{\frac{n}{r}}\bigl(\sigma(P(x-y))\otimes\tau(P(x-y))\bigr) \\*
&\eqspace{}\times \int_{C(x,y)} \det(z-y)^{-\frac{n}{r}}\det(x-z)^{-\frac{n}{r}}\bigl(\sigma(P(z-y)^{-1})\otimes\tau(P(x-z)^{-1})\bigr) \\*
&\eqspace{}\times \bigl(\sigma(\kappa(g,z)^{-1})\otimes\tau(\kappa(g,z)^{-1})\bigr)\rK\bigl(\kappa(g,z).((z-y)^{-1}+(x-z)^{-1})^{-1}\bigr) f(g.z)\,dz \\
&=\det(x-y)^{\frac{n}{r}}(\sigma\otimes\tau)(P(x-y))\int_{C(x,y)} \det(z-y)^{-\frac{n}{r}}\det(x-z)^{-\frac{n}{r}} \\*
&\eqspace{}\times\hspace{-1pt} \bigl(\sigma(P(z-y)^{-1})\otimes\tau(P(x-z)^{-1})\bigr)\rK\bigl(((z-y)^{-1}+(x-z)^{-1})^{-1}\bigr)\rho(\kappa(g,z)^{-1})f(g.z)\hspace{1pt}dz \\
&=((\cF^{\sigma,\tau}_{\rho,\rK\uparrow}\circ\hat{\rho}(g^{-1}))f)(x,y). \qedhere
\end{align*}
\end{proof}

\subsection{Proof of holomorphy}\label{subsection_hol}

In this subsection, we prove that the image of $\cF^{\sigma,\tau}_{\rho,\rK\uparrow}$ becomes a holomorphic function. 

\begin{lemma}\label{lem_holomorphic}
Under the setting of Theorem \ref{main_thm}, we have 
\[ \cF^{\sigma,\tau}_{\rho,\rK\uparrow}(\cO(D,V_\rho))\subset \cO(D\times D,V_\sigma\otimes V_\tau)\det(x-y)^{m-\lceil(\lambda_1+\mu_1-\lambda_r-\mu_r)/2\rceil}. \]
\end{lemma}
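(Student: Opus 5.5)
The plan is to prove holomorphy on $(D\times D)^\times$ first, and then remove the singular set $\{\det(x-y)=0\}$ by means of Lemma \ref{lem_conti} and the Riemann extension theorem.

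\emph{Step 1: holomorphy on $(D\times D)^\times$.} Since $C(x,y)$ moves with $(x,y)$ only real-analytically, I would not differentiate the definition directly. Instead I would show that, locally, the contour can be replaced by a chain that depends holomorphically on $(x,y)$.

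Fix $(x_0,y_0)\in(D\times D)^\times$. Using the $G$-equivariance $C(g.x,g.y)=g.C(x,y)$ and Lemma \ref{lem_intertwining}, I may assume $y_0=0$ and $x_0=u_0\in\Omega\cap D$. For $(x,y)$ near $(u_0,0)$, I would choose an element $h(x,y)\in G^\BC$ depending holomorphically on $(x,y)$ with $h(u_0,0)=I$, $h(x,y).0=y$ and $h(x,y).u_0=x$. For example, $h(x,y)=\exp((y,0,0))\,l(x,y)$ with $l(x,y)\in K^\BC$ a holomorphic local solution of $l(x,y)u_0=x-y$ near $l=I$. I would then set
\[
\Gamma(x,y):=h(x,y).\bigl(\Omega\cap(u_0-\Omega)\bigr).
\]
For $(x,y)$ close enough to $(u_0,0)$, this chain lies in $D$, since $\overline{\Omega\cap(u_0-\Omega)}$ is a compact subset of $D$.

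Next I would show $\int_{C(x,y)}=\int_{\Gamma(x,y)}$. The integrand $\omega_{x,y}(w)$ is a holomorphic $n$-form in $w$ on the complement of $\{\det(w-y)\det(x-w)=0\}$, hence closed. The chains $C(x,y)$ and $\Gamma(x,y)$ are both images of $\Omega\cap(u_0-\Omega)$ and are joined by a homotopy: interpolate the real element $g\in G$ from the definition of $C(x,y)$ with $h(x,y)$ inside $G^\BC$ near the identity. Along this homotopy the boundary always stays in the divisor where $\det(w-y)\det(x-w)=0$.

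By Stokes' theorem, the two integrals agree provided the contribution from the boundary faces vanishes. This boundary control is the step I expect to be the main obstacle. I would handle it by truncating to $\Omega_\varepsilon\cap(u_0-\Omega_\varepsilon)$ and letting $\varepsilon\to0$. The boundary terms are controlled by the same estimate as in the proof of Lemma \ref{lem_conti}: the integrand is dominated by $\det(z)^{\Re\lambda_r+m-n/r}\det(e-z)^{\Re\mu_r+m-n/r}$. The integrability exponents $\Re\lambda_r+m>\frac nr-1$ and $\Re\mu_r+m>\frac nr-1$, via \cite[Theorem VI.2.3]{FK}, should make the lateral terms tend to $0$.

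Once the contour is fixed, I would pull back by $z\mapsto h(x,y).z$. The integral becomes an integral over the fixed domain $\Omega\cap(u_0-\Omega)$ of a function that is holomorphic in $(x,y)$ and uniformly dominated by an integrable function on compact sets of parameters. Differentiating under the integral sign, or applying Morera's theorem, then gives holomorphy of $\cF^{\sigma,\tau}_{\rho,\rK\uparrow}f$ on $(D\times D)^\times$.

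\emph{Step 2: extension across $\{\det(x-y)=0\}$.} Put $m'=m-\lceil(\lambda_1+\mu_1-\lambda_r-\mu_r)/2\rceil$. By Lemma \ref{lem_conti}, $\det(x-y)^{-m'}(\cF^{\sigma,\tau}_{\rho,\rK\uparrow}f)(x,y)$ is locally bounded on $D\times D$. By Step 1, it is holomorphic off the analytic hypersurface $\{\det(x-y)=0\}$. The Riemann extension theorem therefore extends it holomorphically to $D\times D$, which is the claimed inclusion.

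In particular, if $2m\ge\lambda_1+\mu_1-\lambda_r-\mu_r$, then $m'\ge0$ and the image lies in $\cO(D\times D,V_\sigma\otimes V_\tau)$. Single-valuedness of all branches involved was already established in Section \ref{subsection_singleval}.
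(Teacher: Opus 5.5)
Your overall strategy matches the paper's. You prove holomorphy off $\{\det(x-y)=0\}$ by deforming $C(x,y)$, via Stokes' theorem for the closed holomorphic $n$-form, into a chain that moves holomorphically with the parameters, and then pull back to a fixed domain. The paper's $F(a)$ is exactly your pulled-back integral with $y=0$, $u_0=e$, $l=a$. Finally you remove the divisor with Lemma~\ref{lem_conti} and the Riemann extension theorem.

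Working jointly in $(x,y)$ with the affine family $h=\exp((y,0,0))\,l(x,y)$ and a local holomorphic section of $l\mapsto lu_0$ is a legitimate simplification. It replaces Lemma~\ref{lem_symmetric}, Hartogs' theorem, the reduction to $y_0=0$, and the global parametrization of Lemma~\ref{lem_diffeo_m}. Two small points need fixing:
\begin{itemize}
\item The interpolating path $g_s$ must stay in $\{g\in G^\BC\mid g.0=y,\ g.u_0=x\}$, a coset of the stabilizer of $(0,u_0)$; near $I$ you can move inside it with the exponential map. Otherwise the lateral boundary does not stay in the divisor.
\item Since $g^{-1}.x=u\neq u_0$ in general, you must first replace $g$ by $gb$ with $b\in L$ near $I$ and $b.u_0=u$.
\end{itemize}

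The genuine gap is the vanishing of the lateral Stokes terms. Dominating the integrand by $\det(z)^{\Re\lambda_r+m-n/r}\det(e-z)^{\Re\mu_r+m-n/r}$, and its integrability over the $n$-dimensional domain, do not by themselves control integrals over the $n$-dimensional lateral faces $\bigcup_s g_s.\partial(\Omega_\varepsilon\cap(u_0-\Omega_\varepsilon))$.

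Already for $\fp^+=\BC$ the lateral term at the left end is $\int_0^1(g_s.\varepsilon-y)^{\lambda-1}(x-g_s.\varepsilon)^{\mu-1}f(g_s.\varepsilon)\,\partial_s(g_s.\varepsilon)\,ds$. Knowing only $|\partial_s(g_s.\varepsilon)|=O(1)$ gives a bound $O(\varepsilon^{\Re\lambda-1})$. This diverges for $0<\Re\lambda<1$, even though the integrand is integrable there.

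The missing idea is the following. Each $g_s$ fixes $0\mapsto y$ and $u_0\mapsto x$, so the deformation field $\xi$ is tangent to $\{\det(z)\det(u_0-z)=0\}$. Hence on the truncated boundary (with $\Omega_\varepsilon=\varepsilon e+\Omega$) its normal component is $O(\varepsilon)$, and $\iota_\xi\,dz$ restricted there is $O(\varepsilon)\,d\sigma$. The lateral term is then at most $C\varepsilon\Psi(\varepsilon)$, where $\Psi(\varepsilon)$ is the surface integral of the dominating function over $\{\text{smallest eigenvalue}=\varepsilon\}$; the piece near $u_0-\partial\Omega$ is handled the same way. The coarea formula together with integrability gives $\int_0^c\Psi(t)\,dt<\infty$, so $\varepsilon_k\Psi(\varepsilon_k)\to0$ along some sequence $\varepsilon_k\to0$.

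Alternatively, use the paper's device, which needs no estimate at all. Write $\sigma=\chi^{-\lambda}\otimes\sigma_0$ and $\tau=\chi^{-\mu}\otimes\tau_0$. For $\Re\lambda$ and $\Re\mu$ large the integrand vanishes on the lateral boundary, so Stokes applies directly. Both sides are holomorphic in $(\lambda,\mu)$ on the connected region of convergence, so the equality persists there.
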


First, we verify that $(\cF^{\sigma,\tau}_{\rho,\rK\uparrow}f)(x,y)$ is symmetric with respect to $x$ and $y$, so that it is enough to verify the holomorphy for one variable. 

\begin{lemma}\label{lem_symmetric}
Under the setting of Theorem \ref{main_thm}, for $f\in\cO(D,V_\rho)$, $(x,y)\in(D\times D)^\times$, we have 
\[ (\cF^{\sigma,\tau}_{\rho,\rK\uparrow}f)(x,y)=(-1)^{\deg\rK} (\cF^{\tau,\sigma}_{\rho,\rK\uparrow}f)(y,x) \]
under the identification $V_\sigma\otimes V_\tau\simeq V_\tau\otimes V_\sigma$,
where $\deg\rK$ is the degree of the polynomial $\rK$. 
\end{lemma}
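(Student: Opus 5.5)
Write out $(\cF^{\tau,\sigma}_{\rho,\rK\uparrow}f)(y,x)$ from (\ref{formula_mainthm}) with the roles of $(\sigma,x)$ and $(\tau,y)$ exchanged, and compare it term by term with $(\cF^{\sigma,\tau}_{\rho,\rK\uparrow}f)(x,y)$. We get
\begin{align*}
(\cF^{\tau,\sigma}_{\rho,\rK\uparrow}f)(y,x)&=\det(y-x)^{\frac{n}{r}}(\tau\otimes\sigma)(P(y-x))\int_{C(y,x)}\det(w-x)^{-\frac{n}{r}}\det(y-w)^{-\frac{n}{r}}\\
&\eqspace{}\times\bigl(\tau(P(w-x)^{-1})\otimes\sigma(P(y-w)^{-1})\bigr)\rK\bigl(((w-x)^{-1}+(y-w)^{-1})^{-1}\bigr)f(w)\,dw.
\end{align*}
Under $V_\tau\otimes V_\sigma\simeq V_\sigma\otimes V_\tau$, the operator factors match those in $(\cF^{\sigma,\tau}_{\rho,\rK\uparrow}f)(x,y)$ exactly. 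This is because $P(-v)=P(v)$, since $P$ is quadratic: $P(y-x)=P(x-y)$, $P(w-x)=P(x-w)$ and $P(y-w)=P(w-y)$. The argument of $\rK$ is negated, $((w-x)^{-1}+(y-w)^{-1})^{-1}=-((w-y)^{-1}+(x-w)^{-1})^{-1}$. Since $\rK$ is homogeneous (it lies in a $\widetilde K$-isotypic piece, hence a single degree, as the center of $K$ acts by scalars on homogeneous components), this gives a factor $(-1)^{\deg\rK}$. Here $\rK\in\cP(\fp^+)\det^m\otimes\Hom$; if $m<0$, then $\deg\rK$ is understood as the total homogeneity degree, which is still an integer.

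The scalar factors contribute $\det(-(x-y))^{\frac{n}{r}}\det(-(x-w))^{-\frac{n}{r}}\det(-(w-y))^{-\frac{n}{r}}$. Formally each gives $(-1)^{r\cdot\frac{n}{r}}$, with net exponent $(1-1-1)n=-n$, so $(-1)^{n}$ overall. The contour gives $C(y,x)=C(x,y)$ with orientation sign $(-1)^n$ by Lemma \ref{lem_ori_contour}\,(2). These two signs of $(-1)^n$ cancel, leaving exactly $(-1)^{\deg\rK}$.

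The main obstacle is making the sign bookkeeping for the multivalued powers $\det(\cdot)^{\pm n/r}$ and for $\sigma(P(\cdot))$, $\tau(P(\cdot))$ on $\widetilde K^\BC$ rigorous, i.e.\ checking that the chosen branches really correspond under $v\mapsto -v$ rather than differing by a root of unity. I would handle this by reducing, via $G$-equivariance (Lemma \ref{lem_intertwining} and the factorization used in the single-valuedness lemma through $p_y\in\exp(\fp)$ and $K$), to the model configuration $y=0$, $x=u\in\Omega\cap D$, $w=z\in\Omega\cap(u-\Omega)$. There all the relevant elements $u,z,u-z$ lie in $\Omega$, and the branches are the positive ones.

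In that model, $\det(-u)^{n/r}$ etc.\ are defined by continuation through the element $i=P(\sqrt{-1}e)\in K$ used in Lemma \ref{lem_ori_contour}. This gives $\chi(i)^{n/r}$-type factors, and the combination $\chi(i)^{\frac{n}{r}-\frac{n}{r}-\frac{n}{r}}=\chi(i)^{-\frac{n}{r}}$ together with the Jacobian $\chi(i)^{\frac{2n}{r}}$ from the orientation computation produces the consistent total sign. Similarly, $\sigma(P(-v))=\sigma(iP(v)\,{}^t i)$ and $\tau(\cdot)$ carry factors of $\sigma(i)$ and $\tau(i)$ that cancel between the prefactor and the integrand, in the same way as $\kappa$-factors cancel in the proof of Lemma \ref{lem_intertwining}.
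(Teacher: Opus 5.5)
Your argument is the paper's proof: rewrite each factor with negated arguments, use $P(-v)=P(v)$, get $(-1)^{\deg\rK}$ from the homogeneity of $\rK$ (forced by invariance under the center of $\widetilde{K}$), get a formal $(-1)^{n}$ from the three powers $\det(\cdot)^{\pm n/r}$, and cancel it against the orientation sign $(-1)^n$ of $C(y,x)=C(x,y)$ from Lemma \ref{lem_ori_contour}\,(2). Your branch discussion goes further than the paper, which just factors out $\det(-e)^{n/r}$ formally, but it has one slip: since $\det(-v)=\chi(i)^2\det(v)$, each power contributes $\chi(i)^{\pm 2n/r}=(-1)^{\pm n}$, not $\chi(i)^{\pm n/r}$; the net $\chi(i)^{-2n/r}$ then cancels the Jacobian $\chi(i)^{2n/r}$, whereas your exponents would leave a stray factor $\chi(i)^{n/r}=(\sqrt{-1})^{n}$.
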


\begin{proof}
First, we note that $\rK$ is a homogeneous polynomial since $\rK$ is invariant under the action of the center of $\widetilde{K}$. 
By Lemma \ref{lem_ori_contour}\,(2), we have 
\begin{align*}
&(\cF^{\sigma,\tau}_{\rho,\rK\uparrow} f)(x,y) \\
&=\det(x-y)^{\frac{n}{r}}\int_{C(x,y)} \det(w-y)^{-\frac{n}{r}}\det(x-w)^{-\frac{n}{r}} \bigl(\sigma(P(x-y)P(w-y)^{-1})\\*
&\eqspace{}\otimes \tau(P(x-y)P(x-w)^{-1})\bigr)\rK\bigl(((w-y)^{-1}+(x-w)^{-1})^{-1}\bigr) f(w)\,dw \\
&=\det(-e)^{\frac{n}{r}}\det(y-x)^{\frac{n}{r}}\int_{C(x,y)} \det(y-w)^{-\frac{n}{r}}\det(w-x)^{-\frac{n}{r}}\bigl(\sigma(P(y-x)P(y-w)^{-1}) \\*
&\eqspace{}\otimes \tau(P(y-x)P(w-x)^{-1})\bigr)\rK\bigl(-((y-w)^{-1}+(w-x)^{-1})^{-1}\bigr) f(w)\,dw \\
&=(-1)^{n+\deg\rK}\det(y-x)^{\frac{n}{r}}(\sigma\otimes\tau)(P(y-x))\int_{C(y,x)} \det(y-w)^{-\frac{n}{r}}\det(w-x)^{-\frac{n}{r}} \\*
&\eqspace{}\times \bigl(\sigma(P(y-w)^{-1})\otimes \tau(P(w-x)^{-1})\bigr)\rK\bigl(((y-w)^{-1}+(w-x)^{-1})^{-1}\bigr) f(w)(-1)^n\,dw \\
&=(-1)^{\deg\rK}(\cF^{\tau,\sigma}_{\rho,\rK\uparrow} f)(y,x). \qedhere
\end{align*}
\end{proof}

Next, we verify the following. 

\begin{lemma}\label{lem_diffeo_m}
We have a diffeomorphism 
\[ \widetilde{\opm}\colon((K\times L)/\diag(K_L)) \times \sqrt{-1}\fk_\fl\longrightarrow K^\BC, \qquad 
((k,l)\diag(K_L),b)\longmapsto kl^{-1}\exp(b). \]
\end{lemma}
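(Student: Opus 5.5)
The map $\widetilde{\opm}$ is built from two standard decompositions, and I will combine them. The first is the global Cartan decomposition of the complex reductive group $K^\BC$ with respect to its compact form $K$: $K\times\sqrt{-1}\fk\to K^\BC$, $(k,b)\mapsto k\exp(b)$, is a diffeomorphism. The second is the analogous decomposition of the real reductive group $L$ with maximal compact $K_L$. By the definitions in Section \ref{section_prelim}, $\fl\cap\fk=\fk_\fl$, so $\fl=\fk_\fl\oplus(\fl\cap\sqrt{-1}\fk)$. On the Jordan side one checks that $\fl\cap\sqrt{-1}\fk=\{L(y)\mid y\in\fn^+\}$, which are the self-adjoint elements. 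Hence $K_L\times(\fl\cap\sqrt{-1}\fk)\to L$, $(k,p)\mapsto k\exp(p)$, is a diffeomorphism.

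The key step is a Lie algebra splitting
\[ \sqrt{-1}\fk=(\fl\cap\sqrt{-1}\fk)\oplus\sqrt{-1}\fk_\fl. \]
To get it, I use the two commuting involutions $l\mapsto -l^*$ (whose fixed points are $\fk$) and $l\mapsto\overline{l}$ (whose fixed points are $\fl$). Decomposing $\fk^\BC$ into joint eigenspaces gives $\fk^\BC=\fk_\fl\oplus\sqrt{-1}\fk_\fl\oplus(\fl\cap\sqrt{-1}\fk)\oplus\sqrt{-1}(\fl\cap\sqrt{-1}\fk)$. Then $\sqrt{-1}\fk$ consists of the $-1$ eigenspace of the first involution, which is $\sqrt{-1}\fk_\fl\oplus(\fl\cap\sqrt{-1}\fk)$. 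So $\sqrt{-1}\fk_\fl$ is exactly the $-1$ eigenspace of complex conjugation inside $\sqrt{-1}\fk$.

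With this splitting in hand, I would apply the Cartan decomposition of $K^\BC$ viewed as a symmetric space of the noncompact type. The group $K_L^\BC$ is the fixed-point set, up to components, of the involution $\theta\colon g\mapsto\overline{g}$ composed appropriately. For the Riemannian symmetric space $K^\BC/K$, the totally geodesic submanifolds $\exp(\fl\cap\sqrt{-1}\fk)\cdot o$ and $\exp(\sqrt{-1}\fk_\fl)\cdot o$ correspond to complementary subspaces $\fp_1\oplus\fp_2$ of $\fp=\sqrt{-1}\fk$ with $[\fp_1,\fp_1]\subset$ fixed part. A Mostow-type fibration, or equivalently a Cartan decomposition relative to a reductive subgroup (the reductive subgroup being $L$), then gives the following diffeomorphism:
\[ K\times_{K_L}\bigl((\fl\cap\sqrt{-1}\fk)\times\sqrt{-1}\fk_\fl\bigr)\ \ni\ [k,p,b]\ \longmapsto\ k\exp(p)\exp(b)\in K^\BC. \]
Concretely, every $g\in K^\BC$ is written uniquely as $g=k\exp(p)\exp(b)$ modulo $K_L$, since $(\sqrt{-1}\fk_\fl,\fl\cap\sqrt{-1}\fk)$ is a symmetric decomposition with respect to the involution $\overline{\cdot}$ of $K^\BC/K$. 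To obtain the stated form, I rewrite $\exp(p)$ through $L=K_L\exp(\fl\cap\sqrt{-1}\fk)$. Since $\exp(p)=l'^{-1}$ with $l'=\exp(-p)\in L$, the pair $(k,l)$ is determined modulo the diagonal $K_L$, which yields $kl^{-1}\exp(b)$. Injectivity modulo $\diag(K_L)$ follows from $K\cap L=K_L$, and surjectivity follows from the decomposition.

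The main obstacle is the relative Cartan (Mostow) decomposition: one must show that $(p,b)\mapsto\exp(p)\exp(b)$ is a diffeomorphism onto $K^\BC/K$ modulo $K_L$. I plan to prove this via the involution $\overline{\cdot}$, by sending $gK$ to $\overline{g}^{-1}g$ on the noncompact symmetric space. This map is a diffeomorphism from $\exp(\sqrt{-1}\fk_\fl)$ onto its image, and uniqueness follows from convexity, that is, from the nonpositive curvature of $K^\BC/K$.
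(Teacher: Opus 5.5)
There is a genuine gap. The decomposition your argument rests on is misstated, attributed to the wrong subgroup, and the proposed proof of it does not work.

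First, the displayed map $K\times_{K_L}\bigl((\fl\cap\sqrt{-1}\fk)\times\sqrt{-1}\fk_\fl\bigr)\to K^\BC$, $[k,p,b]\mapsto k\exp(p)\exp(b)$, is not well defined. Replacing $(k,p,b)$ by $(km^{-1},\Ad(m)p,\Ad(m)b)$ with $m\in K_L$ changes the image to $k\exp(p)\exp(b)m^{-1}$. It also cannot be a diffeomorphism. With $\fp_\fl:=\fl\cap\sqrt{-1}\fk$ one has $\fk=\fk_\fl\oplus\sqrt{-1}\fp_\fl$, so the source has dimension $\dim\fk_\fl+2\dim\fp_\fl$, whereas $\dim_\BR K^\BC=2\dim\fk_\fl+2\dim\fp_\fl$. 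What is true, and what you need, is that $K\times\fp_\fl\times\sqrt{-1}\fk_\fl\to K^\BC$, $(k,p,b)\mapsto k\exp(p)\exp(b)$, is a diffeomorphism with no quotient. The $\diag(K_L)$-ambiguity enters only afterwards, when $\exp(-p)$ is replaced by an arbitrary $l\in L=\exp(\fp_\fl)K_L$.

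Second, a Mostow decomposition relative to $L$ does not give this. There $\fl\cap\sqrt{-1}\fk=\fp_\fl$ has orthogonal complement $\sqrt{-1}\fk_\fl$, and Mostow yields $K^\BC=K\exp(\sqrt{-1}\fk_\fl)L$. That is a different decomposition, with the factors in the other order, and it does not give the lemma.

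Third, the map $gK\mapsto\overline{g}^{-1}g$ is not defined on $K^\BC/K$. It is invariant under left multiplication by $L$, not under right multiplication by $K$. So your sketch does not establish the step you yourself identify as the main obstacle.

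The repair is to use the subgroup $K_L^\BC=K_L\exp(\sqrt{-1}\fk_\fl)$, the stabilizer of $e$. It is stable under the Cartan involution of $K^\BC$ fixing $K$, and it meets $K$ in $K_L$. Its intersection with $\sqrt{-1}\fk$ is $\sqrt{-1}\fk_\fl$, whose orthogonal complement in $\sqrt{-1}\fk$ is $\fp_\fl$: elements $X\in\fp_\fl$ satisfy ${}^tX=X$, elements $Y\in\sqrt{-1}\fk_\fl$ satisfy ${}^tY=-Y$, hence $\Tr(XY)=0$. Geometrically, $K_L^\BC\cdot o$ is a closed totally geodesic submanifold of the Hadamard manifold $K^\BC/K$ with normal space $\fp_\fl$ at $o$. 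Its normal exponential map $(b,p)\mapsto\exp(b)\exp(p)\cdot o$ is therefore a diffeomorphism $\sqrt{-1}\fk_\fl\times\fp_\fl\to K^\BC/K$. Inverting gives exactly the decomposition above, and the Cartan decomposition of $L$ converts $K\times\fp_\fl$ into $(K\times L)/\diag(K_L)$.

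Corrected this way, your route genuinely differs from the paper's. The paper proves this very case of the Mostow fibration $K^\BC/K_L^\BC\simeq K\times_{K_L}\fp_\fl$ by hand in Jordan-algebraic terms:
\begin{itemize}
\item existence comes from writing $g.e=k.u$ with $u\in\Omega=L.e$ and using that the stabilizer of $e$ is $K_L^\BC$;
\item uniqueness modulo $\diag(K_L)$ is Lemma \ref{lem_KL};
\item the local diffeomorphism property is a direct computation of the differential.
\end{itemize}
Quoting the general theorem buys brevity. The paper's argument is self-contained and uses only structure already set up.
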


\begin{proof}
First, we prove the surjectivity of $\widetilde{\opm}$. Let $g\in K^\BC$, and consider $g.e\in \fp^+$. 
Then we can take $k\in K$ such that $k^{-1}.(g.e)\in\Omega$, and $l'\in L$ such that $l'.(k^{-1}.(g.e))=e$. 
Let $a:=l'k^{-1}g\in K^\BC$. Then since $a.e=e$, we have $a\in K_L^\BC=K_L\exp(\sqrt{-1}\fk_\fl)$, and we can take $k_a\in K_L$ and $b\in\sqrt{-1}\fk_\fl$ such that $a=k_a\exp(b)$. 
Then by putting $l:=k_a^{-1}l'\in L$, we get $g=kl^{-1}\exp(b)$. 

Next, we prove the injectivity of $\widetilde{\opm}$. 
Suppose that $k_j\in K$, $l_j\in L$, $b_j\in \sqrt{-1}\fk_\fl$ $(j=1,2)$ satisfy $k_1l_1^{-1}\exp(b_1)=k_2l_2^{-1}\exp(b_2)$. 
Then since $\exp(b_j).e=e$ holds, we have $l_1^{-1}.e=k_1^{-1}k_2l_2^{-1}.e$, and since $k_1^{-1}k_2\in K$, $l_j^{-1}.e\in\Omega$, we have $k_1^{-1}k_2\in K_L\subset L$ by Lemma \ref{lem_KL},  
and hence $k_0:=l_1k_1^{-1}k_2l_2^{-1}=\exp(b_1)\exp(b_2)^{-1}\in L\cap K_L^\BC=K_L$. Then by $\exp(b_1)=k_0\exp(b_2)$ and the injectivity of $K_L\times\exp(\sqrt{-1}\fk_\fl)\to K_L^\BC$, we have $k_0=I$, $b_1=b_2$, and get $k_1l_1^{-1}=k_2l_2^{-1}$. 
Then since $k_1^{-1}k_2=l_1^{-1}l_2\in K\cap L=K_L$ holds, we have $(k_1,l_1)\diag(K_L)=(k_2,l_2)\diag(K_L)$. 

Finally, we prove the surjectivity of the differential of 
\[ \opm\colon K\times L\times \sqrt{-1}\fk_\fl\longrightarrow K^\BC, \qquad 
(k,l,b)\longmapsto kl^{-1}\exp(b). \]
at each point. 
We take any $k_0\in K$, $l_0\in L$ and $b_0\in\sqrt{-1}\fk_\fl$. 
We consider the translations 
\begin{gather*}
\rL(k_0,l_0,b_0)\colon K\times L\times \sqrt{-1}\fk_\fl\longrightarrow K\times L\times\sqrt{-1}\fk_\fl, \qquad
(k,l,b)\longmapsto (k_0k,l_0l,b_0+b), \\
\rL(l_0k_0^{-1})\rR(\exp(-b_0))\colon K^\BC\longrightarrow K^\BC, \qquad
g\longmapsto l_0k_0^{-1}g\exp(-b_0), 
\end{gather*}
and let 
\[ \opm_{(k_0,l_0,b_0)}:=\rL(l_0k_0^{-1})\rR(\exp(-b_0))\circ\opm\circ \rL(k_0,l_0,b_0). \]
Then the surjectivity of the differential of $\opm$ at $(k_0,l_0,b_0)$, 
\[ (d\opm)_{(k_0,l_0,b_0)}\colon T_{(k_0,l_0,b_0)}(K\times L\times \sqrt{-1}\fk_\fl)
\longrightarrow T_{k_0l_0^{-1}\exp(b_0)}K^\BC \]
is equivalent to the surjectivity of the differential of $\opm_{(k_0,l_0,b_0)}$ at $(I,I,0)$, 
\begin{gather*}
(d\opm_{(k_0,l_0,b_0)})_{(I,I,0)}\colon T_{(I,I,0)}(K\times L\times \sqrt{-1}\fk_\fl)=\fk\oplus\fl\oplus\sqrt{-1}\fk_\fl
\longrightarrow T_{I}K^\BC=\fk^\BC, \\
\begin{split}
(k,l,b)\longmapsto{} &\frac{d}{dt}\biggr|_{t=0}l_0k_0^{-1}k_0\exp(tk)\exp(-tl)l_0^{-1}\exp(b_0+tb)\exp(-b_0) \\
&=\Ad(l_0)(k-l)+\frac{d}{dt}\biggr|_{t=0}\exp(b_0+tb)\exp(-b_0). 
\end{split}
\end{gather*}
Now, since $K_L\times\exp(\sqrt{-1}\fk_\fl)\to K_L^\BC$ is a diffeomorphism, 
\[ \fk_\fl\oplus\sqrt{-1}\fk_\fl\longrightarrow \fk_\fl^\BC, \qquad 
(k,b)\longmapsto k+\frac{d}{dt}\biggr|_{t=0}\exp(b_0+tb)\exp(-b_0) \]
is bijective, and since $\Ad(l_0)(\fl)=\fl\supset\fk_\fl$, we have 
\begin{align*}
(d\opm_{(k_0,l_0,b_0)})_{(I,I,0)}(\fk\oplus\fl\oplus\sqrt{-1}\fk_\fl)
&=\Ad(l_0)\fk+\fl+\sqrt{-1}\fk_\fl \\
&=\Ad(l_0)(\fk_\fl+\sqrt{-1}\fp_\fl)+(\fk_\fl+\fp_\fl)+\sqrt{-1}\fk_\fl \\
&=\fk_\fl+\fp_\fl+\sqrt{-1}(\fk_\fl+\Ad(l_0)\fp_\fl), 
\end{align*}
where $\fl=\fk_\fl+\fp_\fl$ is the Cartan decomposition of $\fl$. 
In addition, we have $\fk_\fl\cap \Ad(l_0)\fp_\fl=\{0\}$, since the Killing form of $\fl$ is positive semidefinite on $\Ad(l_0)\fp_\fl$ and negative definite on $\fk_\fl$. 
Combining this with the injectivity of $\Ad(l_0)|_{\fp_\fl}\colon \fp_\fl\to\fk_\fl+\fp_\fl$, we get $\fk_\fl+\Ad(l_0)\fp_\fl=\fk_\fl+\fp_\fl$, and 
\begin{align*}
(d\opm_{(k_0,l_0,b_0)})_{(I,I,0)}(\fk\oplus\fl\oplus\sqrt{-1}\fk_\fl)
=\fk_\fl+\fp_\fl+\sqrt{-1}(\fk_\fl+\fp_\fl)=\fk^\BC. 
\end{align*}
Hence the differential of $\opm$ is surjective everywhere, and $\widetilde{\opm}$ is a diffeomorphism. 
\end{proof}

\begin{proof}[Proof of Lemma \ref{lem_holomorphic}]
First, since $\det(x-y)^{-m+\lceil(\lambda_1+\mu_1-\lambda_r-\mu_r)/2\rceil}(\cF^{\sigma,\tau}_{\rho,\rK\uparrow}f)(x,y)$ is locally bounded on $D\times D$ by Lemma \ref{lem_conti}, it suffices to verify $(\cF^{\sigma,\tau}_{\rho,\rK\uparrow}f)(x,y)\in \cO((D\times D)^\times, V_\sigma\otimes V_\tau)$, 
and by Lemma \ref{lem_symmetric} and Hartogs' theorem, it is enough to verify that $(\cF^{\sigma,\tau}_{\rho,\rK\uparrow} f)(x,y_0)\in\cO(\{x\in D\mid\det(x-y_0)\ne 0\},V_\sigma\otimes V_\tau)$ holds for each fixed $y_0\in D$. 
Moreover, for each $y_0=g^{-1}.0\in D$ with $g\in \widetilde{G}$, by Lemma \ref{lem_intertwining}, we have 
\begin{align*}
&(\cF^{\sigma,\tau}_{\rho,\rK\uparrow} f)(x,y_0)=(((\hat{\sigma}\hotimes\hat{\tau})(g^{-1})\circ\cF^{\sigma,\tau}_{\rho,\rK\uparrow} \circ\hat{\rho}(g))f)(x,y_0) \\
&=\bigl(\sigma(\kappa(g,x))^{-1}\otimes\tau(\kappa(g,y_0))^{-1}\bigr)((\cF^{\sigma,\tau}_{\rho,\rK\uparrow} \circ\hat{\rho}(g))f)(g.x,0), 
\end{align*}
and hence it suffices to consider the case $y_0=0$. 

In the following, we prove $(\cF^{\sigma,\tau}_{\rho,\rK\uparrow} f)(x,0) \in\cO(D^\times,V_\sigma\otimes V_\tau)$, where $D^\times$ is as in (\ref{formula_Dtimes}). We set 
\[ L^-:=\{a\in L\mid a.e\in D\cap\Omega=\Omega\cap(e-\Omega)\}, \]
so that 
\[ K^\BC/K_L^\BC\supset KL^-K_L^\BC/K_L^\BC\overset{\sim}{\longrightarrow} D^\times\subset\fp^{+\times} \]
hold by $a K_L^\BC\mapsto a.e$. Then by Lemma \ref{lem_diffeo_m}, the multiplication map 
\[ KL^-\times \exp(\sqrt{-1}\fk_\fl)\longrightarrow K^\BC, \qquad (a,b)\longmapsto ab \]
is a diffeomorphism onto its image. Let $\cU\subset K^\BC$ be an open subset satisfying 
\begin{itemize}
\item $KL^-\subset\cU\subset\{a\in K^\BC\mid a.(\overline{\Omega}\cap(e-\overline{\Omega}))\subset D\}\subset K^\BC$, 
\item For all $a\in KL^-$, $\cU_a:=\cU\cap a\exp(\sqrt{-1}\fk_\fl)\subset\cU$ is path-connected, and 
\item $\cU=\bigsqcup_{a\in KL^-}\cU_a$, 
\end{itemize}
so that $\cU K_L^\BC/K_L^\BC\simeq D^\times$ holds, where $\overline{\Omega}$ is the closure of $\Omega$. 
For $f\in\cO(D,V_\rho)$, we define the function $F\colon\cU\to V_\sigma\otimes V_\tau$ by 
\begin{align*}
F(a)&=\det(a.e)^{\frac{n}{r}}(\sigma\otimes\tau)(P(a.e))\int_{a.(\Omega\cap(e-\Omega))} \det(w)^{-\frac{n}{r}}\det(a.e-w)^{-\frac{n}{r}} \\*
&\eqspace{}\times \bigl(\sigma(P(w)^{-1})\otimes \tau(P(a.e-w)^{-1})\bigr)\rK\bigl((w^{-1}+(a.e-w)^{-1})^{-1}\bigr) f(w)\,dw. 
\end{align*}
To prove $(\cF^{\sigma,\tau}_{\rho,\rK\uparrow} f)(x,0)\in\cO(D^\times,V_\sigma\otimes V_\tau)$, it suffices to show $F\in\cO(\cU,V_\sigma\otimes V_\tau)$ and $F(a)=(\cF^{\sigma,\tau}_{\rho,\rK\uparrow} f)(a.e,0)$. 
First, $F\in\cO(\cU,V_\sigma\otimes V_\tau)$ follows from 
\begin{align*}
&F(a)
=\det(a.e)^{\frac{n}{r}}(\sigma\otimes\tau)(P(a.e))\int_{\Omega\cap(e-\Omega)} \det(a.z)^{-\frac{n}{r}}\det(a.e-a.z)^{-\frac{n}{r}} \\*
&\eqspace{}\times \bigl(\sigma(P(a.z)^{-1})\otimes \tau(P(a.e\hspace{-1pt}-\hspace{-1pt}a.z)^{-1})\bigr)\rK\bigl(((a.z)^{-1}\hspace{-1pt}+\hspace{-1pt}(a.e\hspace{-1pt}-\hspace{-1pt}a.z)^{-1})^{-1}\bigr) f(a.z)\det(a.e)^{\frac{n}{r}}\hspace{1pt}dz \\
&=(\sigma\otimes\tau)(a{}^t\hspace{-1pt}a)\int_{\Omega\cap(e-\Omega)} \det(z)^{-\frac{n}{r}}\det(e-z)^{-\frac{n}{r}} \\*
&\eqspace{}\times \bigl(\sigma({}^t\hspace{-1pt}a^{-1}P(z)^{-1}a^{-1})\otimes \tau({}^t\hspace{-1pt}a^{-1}P(e-z)^{-1}a^{-1})\bigr)\rK\bigl(a.(z^{-1}+(e-z)^{-1})^{-1}\bigr) f(a.z)\,dz \\
&=(\sigma\otimes\tau)(a)\int_{\Omega\cap(e-\Omega)} \det(z)^{-\frac{n}{r}}\det(e-z)^{-\frac{n}{r}} \\*
&\eqspace{}\times \bigl(\sigma(P(z)^{-1})\otimes \tau(P(e-z)^{-1})\bigr)\rK\bigl((z^{-1}+(e-z)^{-1})^{-1}\bigr)\rho(a^{-1}) f(a.z)\,dz. 
\end{align*}
Next, for $a=kl\in KL^-\subset\cU$ with $k\in K$, $l\in L^-$, since 
\begin{align*}
a.(\Omega\cap(e-\Omega))&=kl.(\Omega\cap(e-\Omega))
=k.(\Omega\cap(l.e-\Omega))=k.C(l.e,0) \\
&=C(k.(l.e),0)=C(a.e,0) 
\end{align*}
holds, we have $F(a)=(\cF^{\sigma,\tau}_{\rho,\rK\uparrow} f)(a.e,0)$ by definition. 
Finally, for an arbitrary $a'\in\cU$, we can take $a\in KL^-$ such that $a'\in \cU_a\subset a\exp(\sqrt{-1}\fk_\fl)$. Then we have $a.e=a'.e$. 
Let $a_s\colon[0,1]\to\cU_a$ be an injective, smooth, non-singular curve such that $a_0=a$, $a_1=a'$, 
and let $V:=\bigsqcup_{0\le s\le 1}a_s.(\Omega\cap(e-\Omega))$. Then its boundary is given by 
\[ \partial V=a.(\Omega\cap(e-\Omega))\sqcup a'.(\Omega\cap(e-\Omega))\sqcup \bigsqcup_{0\le s\le 1}\partial(a_s.(\Omega\cap(e-\Omega))). \]
Since $\sigma,\tau$ are of the form $\sigma=\chi^{-\lambda}\otimes\sigma_0$, $\tau=\chi^{-\mu}\otimes\tau_0$ with continuous parameters $\lambda,\mu$, $\det(w)^{-\frac{n}{r}}\sigma(P(w)^{-1})$ vanishes on $\partial(a_s.\Omega)$ if $\Re\lambda$ is sufficiently large. 
Similarly, $\det(a.e-w)^{-\frac{n}{r}}\tau(P(a.e-w)^{-1})$ vanishes on $\partial(a_s.(e-\Omega))$ if $\Re\mu$ is sufficiently large. Then we have 
\begin{align*}
0&=\det(a.e)^{\frac{n}{r}}(\sigma\otimes\tau)(P(a.e))\int_{\partial V} \det(w)^{-\frac{n}{r}}\det(a.e-w)^{-\frac{n}{r}} \\*
&\hspace{30pt}\times \bigl(\sigma(P(w)^{-1})\otimes \tau(P(a.e-w)^{-1})\bigr)\rK\bigl((w^{-1}+(a.e-w)^{-1})^{-1}\bigr) f(w)\,dw \\
&=\det(a'.e)^{\frac{n}{r}}(\sigma\otimes\tau)(P(a'.e))\int_{a'.(\Omega\cap(e-\Omega))} \det(w)^{-\frac{n}{r}}\det(a'.e-w)^{-\frac{n}{r}} \\*
&\hspace{30pt}\times \bigl(\sigma(P(w)^{-1})\otimes \tau(P(a'.e-w)^{-1})\bigr)\rK\bigl((w^{-1}+(a'.e-w)^{-1})^{-1}\bigr) f(w)\,dw \\*
&\eqspace{}-\det(a.e)^{\frac{n}{r}}(\sigma\otimes\tau)(P(a.e))\int_{a.(\Omega\cap(e-\Omega))} \det(w)^{-\frac{n}{r}}\det(a.e-w)^{-\frac{n}{r}} \\*
&\hspace{30pt}\times \bigl(\sigma(P(w)^{-1})\otimes \tau(P(a.e-w)^{-1})\bigr)\rK\bigl((w^{-1}+(a.e-w)^{-1})^{-1}\bigr) f(w)\,dw \\
&=F(a')-F(a), 
\end{align*}
and hence we get 
\[ F(a')=F(a)=(\cF^{\sigma,\tau}_{\rho,\rK\uparrow} f)(a.e,0)=(\cF^{\sigma,\tau}_{\rho,\rK\uparrow} f)(a'.e,0). \]
By considering the analytic continuation with respect to the continuous parameters of $\sigma,\tau$, this holds if the integral converges, 
even if $\det(w)^{-\frac{n}{r}}\det(a.e-w)^{-\frac{n}{r}}\sigma(P(w)^{-1})\tau(P(a.e-w)^{-1})$ does not vanish on $\partial(a_s.(\Omega\cap(e-\Omega)))$. 
Thus we have $(\cF^{\sigma,\tau}_{\rho,\rK\uparrow} f)(x,0)\in\cO(D^\times,V_\sigma\otimes V_\tau)\simeq\cO(\cU K_L^\BC/K_L^\BC,V_\sigma\otimes V_\tau)$, and hence $(\cF^{\sigma,\tau}_{\rho,\rK\uparrow} f)(x,y)\in\cO((D\times D)^\times ,V_\sigma\otimes V_\tau)$. 
\end{proof}

By the above lemmas, we get Theorem \ref{main_thm}.

\subsection{Proof of Theorem \ref{main_thm2}}\label{subsection_thm2}

In this subsection, we prove Theorem \ref{main_thm2}. To do this, it suffices to verify $\cF^{\sigma,\tau}_{\rho,\rK\uparrow}(\cO(D,V_\rho))\subset \cO(D\times D,V_\sigma\otimes V_\tau)$. 
First, we prove that the images of constant functions are holomorphic on $D\times D$. 

\begin{lemma}\label{lem_const_func}
Under the setting of Theorem \ref{main_thm}, we regard $V_\rho\subset\cO(D,V_\rho)$ as the set of constant functions. 
\begin{enumerate}
\item For $v\in V_\rho$, the function $(\cF^{\sigma,\tau}_{\rho,\rK\uparrow}v)(x,0)$ defined for $x\in D^\times$ is holomorphically continued to $x\in\fp^{+\times}$, and for $(x,y)\in (D\times D)^\times$, we have 
\[ (\cF^{\sigma,\tau}_{\rho,\rK\uparrow}v)(x,y)=(\cF^{\sigma,\tau}_{\rho,\rK\uparrow}v)(x-y,0). \]
\item If $(\rho,V_\rho)$ satisfies the assumption of Theorem \ref{main_thm2}, 
then we have 
\[ \cF^{\sigma,\tau}_{\rho,\rK\uparrow}(V_\rho)\subset\cP(\fp^+\oplus\fp^+, V_\sigma\otimes V_\tau)\subset\cO(D\times D,V_\sigma\otimes V_\tau). \]
\end{enumerate}
\end{lemma}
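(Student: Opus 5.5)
For (1), the plan is to exploit translation invariance together with the explicit formula for $F$ in the proof of Lemma \ref{lem_holomorphic}. Fix $v\in V_\rho$. Since $\hat\rho(\exp((u,0,0)))v=v$ and $\exp((u,0,0))$ acts trivially on the cocycle, Lemma \ref{lem_intertwining} gives $(\cF v)(x+u,y+u)=(\cF v)(x,y)$ for $u\in\fp^+$ real and small, whenever both points lie in $(D\times D)^\times$. By holomorphy (Lemma \ref{lem_holomorphic}) the same holds for complex $u$. Hence $(\cF v)(x,y)=(\cF v)(x-y,0)$ whenever $x-y\in D^\times$. Along a path in $(D\times D)^\times$ one can realize any $(x,y)$ this way, which proves the identity.

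For the continuation to $\fp^{+\times}$, I would use the last displayed expression for $F(a)$ in the proof of Lemma \ref{lem_holomorphic} with $f=v$ constant:
\[
F(a)=(\sigma\otimes\tau)(a)\,I\,\rho(a^{-1})v,
\]
where $I=\int_{\Omega\cap(e-\Omega)}\cdots\,dz\in\Hom(V_\rho,V_\sigma\otimes V_\tau)$ is a fixed operator independent of $a$. This expression is holomorphic on all of $\widetilde K^\BC$. It is also right-$\widetilde K_L^\BC$-invariant: for $b\in K_L^\BC$, $b$ fixes $e$ and preserves $\Omega\cap(e-\Omega)$ up to the holomorphic deformation argument already used, or more directly $I$ is $K_L$-equivariant by the $\widetilde K$-invariance of $\rK$, and then holomorphically $K_L^\BC$-equivariant. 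Therefore $F$ descends to a holomorphic function on $\widetilde K^\BC/\widetilde K_L^\BC$, which covers $\fp^{+\times}$ via $a\mapsto a.e$. I need to check that $F$ descends to $\fp^{+\times}$ itself rather than to a cover. Since $(\cF v)(x,0)$ is single-valued on $D^\times$, and the deck transformations of $\widetilde K^\BC/\widetilde K_L^\BC\to\fp^{+\times}$ are generated by loops already present in $D^\times$ (which is a deformation retract via scaling $x\mapsto tx$), the descent follows.

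For (2), the function $\Phi(x):=(\cF v)(x,0)$ is holomorphic on $\fp^{+\times}$ and satisfies the following.
\begin{enumerate}
\item It is homogeneous: using $a=t\cdot I$ in the formula above, $\Phi(tx)$ equals $t^{N}\Phi(x)$ for a fixed $N$ determined by the central characters of $\sigma,\tau,\rho$.
\item It is $\widetilde K$-equivariant: $\Phi(k.x)=(\sigma\otimes\tau)(k)\Phi(x)\rho(k)^{-1}$.
\end{enumerate}
Moreover, by Lemma \ref{lem_conti}, $\Phi(x)\det(x)^{-m'}$ is locally bounded near $0$. A homogeneous holomorphic function on $\fp^{+\times}$ with at most polynomial growth of $\det^{-1}$ near the divisor is a polynomial times a power of $\det^{-1}$. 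The main obstacle is justifying this: I would first show that $\Phi\det^{M}$ extends holomorphically across $\{\det=0\}$ for large $M$ by Riemann extension, using local boundedness, which again follows from the formula for $F$ with the integrals $B_{\sigma,\tau,m}$. Homogeneity then makes $\Phi\det^M$ a polynomial.

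Thus $\Phi$, viewed as a function of $v$, lies in $\bigl(\cP(\fp^+)[\det^{-1}]\otimes\Hom(V_\rho,V_\sigma\otimes V_\tau)\bigr)^{\widetilde K}$. By the assumption of Theorem \ref{main_thm2}, it therefore lies in $\cP(\fp^+)\otimes\Hom(V_\rho,V_\sigma\otimes V_\tau)$. Finally, (1) gives $(\cF v)(x,y)=\Phi(x-y)v$, which is a polynomial on $\fp^+\oplus\fp^+$.
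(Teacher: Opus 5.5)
The step that does not work as written is the translation invariance in (1). For $u\neq 0$ the element $\exp((u,0,0))$ is not in $\widetilde G$, because elements of $\fg$ have the form $(x,k,\overline{x})$. For $u\in\fn^+$ it lies in $N^+\subset{}^c\hspace{-1pt}G$, which acts on $T_\Omega$, not on $D$. So Lemma \ref{lem_intertwining} tells you nothing about $(\cF^{\sigma,\tau}_{\rho,\rK\uparrow}v)(x+u,y+u)$, for real or complex $u$.

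The conclusion is still true, but you have to argue infinitesimally, as in the proof of Theorem \ref{thm_diff_expr}:
\begin{enumerate}
\item Differentiate the $\widetilde G$-intertwining relation. This is legitimate by the continuity in Lemma \ref{lem_conti} and the holomorphy of the image in Lemma \ref{lem_holomorphic}.
\item Extend complex-linearly to $(u,0,0)\in\fg^\BC$.
\item Use that $d\hat\rho((u,0,0))=-\partial_u$ kills the constant $v$, while $d(\hat\sigma\hotimes\hat\tau)((u,0,0))=-(\partial_u^x+\partial_u^y)$. Hence $t\mapsto(\cF^{\sigma,\tau}_{\rho,\rK\uparrow}v)(x-ty,y-ty)$ is constant.
\end{enumerate}
When $x-y\in D^\times$, this path stays in $(D\times D)^\times$ by convexity of $D$. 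For general $(x,y)\in(D\times D)^\times$, the right-hand side $(\cF^{\sigma,\tau}_{\rho,\rK\uparrow}v)(x-y,0)$ only makes sense after the continuation to $\fp^{+\times}$. So that step must come first, followed by the identity theorem on the connected set $(D\times D)^\times$.

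The paper avoids the Lie algebra altogether. For $x,y\in D\cap\fn^+$ with $x-y\in\Omega$, Lemma \ref{lem_ori_contour}\,(1) gives $C(x,y)=(y+\Omega)\cap(x-\Omega)$. The substitution $w\mapsto w+y$ then proves the identity on a maximal totally real set, and holomorphy does the rest.

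The remainder of your argument is sound but longer than necessary. Your formula $F(a)=(\sigma\otimes\tau)(a)\,I\,\rho(a^{-1})v$, with $I$ being $\widetilde K_L^\BC$-equivariant, does give the continuation. The covering worry is moot: a deck element $c$ is central and acts trivially on $\fp^+$, so the $\widetilde K$-invariance of $\rK\neq 0$ forces $\sigma(c)\tau(c)=\rho(c)$. Hence $F$ is invariant under the full preimage of $K_L^\BC$. Likewise, homogeneity under the center together with Lemma \ref{lem_holomorphic} yields $(\cF^{\sigma,\tau}_{\rho,\rK\uparrow}v)(x,0)\in\cP(\fp^+,V_\sigma\otimes V_\tau)\det(x)^{m'}$.

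The paper gets both facts at once from $\widetilde K$-finiteness. By Lemmas \ref{lem_intertwining} and \ref{lem_holomorphic}, $v\mapsto(\cF^{\sigma,\tau}_{\rho,\rK\uparrow}v)(\cdot,0)$ is a $\widetilde K$-homomorphism from the finite-dimensional $V_\rho$ into $\cO(D,V_\sigma\otimes V_\tau)\det(x)^{m'}$. The $\widetilde K$-finite part of that space is $\cP(\fp^+,V_\sigma\otimes V_\tau)\det(x)^{m'}$. This immediately gives both the continuation to $\fp^{+\times}$ and membership in $\cP(\fp^+)[\det^{-1}]$, which is exactly what (2) needs.

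Your homogeneity and Riemann-extension argument is essentially the central-character part of that fact redone by hand. What your route adds is an explicit closed form of the continuation, $a.e\mapsto(\sigma\otimes\tau)(a)I\rho(a)^{-1}v$, which is not needed for this lemma.
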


\begin{proof}
(1) By Lemmas \ref{lem_intertwining} and \ref{lem_holomorphic}, we have 
\begin{align*}
\cF^{\sigma,\tau}_{\rho,\rK\uparrow}(\cdot,0)|_{V_\rho}&\in\Hom_{\widetilde{K}}\bigl(V_\rho,\cO(D,V_\sigma\otimes V_\tau)\det(x)^{m-\lceil(\lambda_1+\mu_1-\lambda_r-\mu_r)/2\rceil}\bigr) \\
&\subset \Hom_{\widetilde{K}}\bigl(V_\rho,\cO(D,V_\sigma\otimes V_\tau)[\det^{-1}]_{\widetilde{K}}\bigr)=\Hom_{\widetilde{K}}\bigl(V_\rho,\cP(\fp^+,V_\sigma\otimes V_\tau)[\det^{-1}]\bigr), 
\end{align*}
and hence $(\cF^{\sigma,\tau}_{\rho,\rK\uparrow}v)(x,0)$ is holomorphically continued to $x\in \fp^{+\times}$. Next, suppose $x,y\in D\cap\fn^+$, $x-y\in\Omega$. Then by Lemma \ref{lem_ori_contour}\,(1), we have 
\begin{align*}
&(\cF^{\sigma,\tau}_{\rho,\rK\uparrow} v)(x,y) \\
&=\det(x-y)^{\frac{n}{r}}(\sigma\otimes\tau)(P(x-y))\int_{(y+\Omega)\cap(x-\Omega)} \det(w-y)^{-\frac{n}{r}}\det(x-w)^{-\frac{n}{r}} \\
&\eqspace{}\times \bigl(\sigma(P(w-y)^{-1})\otimes \tau(P(x-w)^{-1})\bigr)\rK\bigl(((w-y)^{-1}+(x-w)^{-1})^{-1}\bigr) v\,dw \\
&=\det(x-y)^{\frac{n}{r}}(\sigma\otimes\tau)(P(x-y))\int_{\Omega\cap(x-y-\Omega)} \det(w)^{-\frac{n}{r}}\det(x-y-w)^{-\frac{n}{r}} \\
&\eqspace{}\times \bigl(\sigma(P(w)^{-1})\otimes \tau(P(x-y-w)^{-1})\bigr)\rK\bigl((w^{-1}+(x-y-w)^{-1})^{-1}\bigr) v\,dw \\
&=(\cF^{\sigma,\tau}_{\rho,\rK\uparrow} v)(x-y,0), 
\end{align*}
and since both sides are holomorphic on $(D\times D)^\times$, this equality holds on $(D\times D)^\times$. 

(2) By the assumption of Theorem \ref{main_thm2}, we have 
\begin{align*}
\cF^{\sigma,\tau}_{\rho,\rK\uparrow}(\cdot,0)|_{V_\rho}&\in\Hom_{\widetilde{K}}\bigl(V_\rho,\cP(\fp^+,V_\sigma\otimes V_\tau)[\det^{-1}]\bigr)=\Hom_{\widetilde{K}}\bigl(V_\rho,\cP(\fp^+,V_\sigma\otimes V_\tau)\bigr), 
\end{align*}
and hence we get $(\cF^{\sigma,\tau}_{\rho,\rK\uparrow}v)(x,0)\in\cP(\fp^+,V_\sigma\otimes V_\tau)$, and $(\cF^{\sigma,\tau}_{\rho,\rK\uparrow}v)(x,y)=(\cF^{\sigma,\tau}_{\rho,\rK\uparrow}v)(x-y,0)\allowbreak \in\cP(\fp^+\oplus\fp^+,V_\sigma\otimes V_\tau)$ holds. 
\end{proof}

To prove $\cF^{\sigma,\tau}_{\rho,\rK\uparrow}(\cO(D,V_\rho))\subset \cO(D\times D,V_\sigma\otimes V_\tau)$, 
we use the following lemma, which holds for general complex manifolds. 

\begin{lemma}
Let $X$ be a complex manifold, and let $Y\subset X$ be a closed complex submanifold such that $\dim Y<\dim X$ holds. 
Then $\cO(X)\subset\cO(X\setminus Y)$ is a closed subspace, and the topology on $\cO(X)$ coincides with the relative topology induced from $\cO(X\setminus Y)$. 
\end{lemma}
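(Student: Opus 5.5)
The plan is to show two things. First, $\cO(X)$ is closed in $\cO(X\setminus Y)$. Second, the inclusion $\cO(X)\hookrightarrow\cO(X\setminus Y)$ is a homeomorphism onto its image. Here both spaces carry the topology of locally uniform convergence, i.e.\ the Fréchet topology given by the seminorms $\sup_{C}|f|$ for compact $C$. Continuity of the inclusion is trivial, since compact subsets of $X\setminus Y$ are compact in $X$. The whole content is therefore the converse estimate: every compact $C\subset X$ must be controlled by some compact $C'\subset X\setminus Y$, uniformly over $f\in\cO(X)$.

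To get this estimate, I will work locally. Since $Y$ is a closed submanifold of codimension $c\ge1$, each point $p\in Y$ has a holomorphic chart $\varphi\colon U\to\Delta^N$, with $\Delta$ the unit disc and $N=\dim X$, in which $Y\cap U=\{z_1=\cdots=z_c=0\}$. I take the closed polydisc of radius $1/2$ around $p$. On it I apply the Cauchy integral formula in the variable $z_1$ alone, integrating over the circle $|z_1|=3/4$. This gives
\[ \sup_{|z|\le 1/2}|f(z)|\le 4\sup_{|z_1|=3/4,\;|z_j|\le 1/2\;(j\ge2)}|f(z)|, \]
and the set on the right is a compact subset of $U\setminus Y$, because $z_1\ne0$ there. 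Covering an arbitrary compact $C\subset X$ by finitely many such half-polydiscs (together with compact neighbourhoods of points of $C\setminus Y$, which are taken inside $X\setminus Y$) produces a compact $C'\subset X\setminus Y$ and a constant $M$ with $\sup_C|f|\le M\sup_{C'}|f|$ for all $f\in\cO(X)$. This shows that the two topologies agree on $\cO(X)$.

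Closedness follows from the same estimate. Suppose $f_k\in\cO(X)$ converges in $\cO(X\setminus Y)$. Then $(f_k)$ is Cauchy uniformly on every $C'$, so by the estimate it is Cauchy uniformly on every compact $C\subset X$. Its limit is then holomorphic on $X$, and it agrees with the given limit on $X\setminus Y$. Since $\cO(X)$ and $\cO(X\setminus Y)$ are Fréchet spaces (metrizable), sequential closedness suffices. The same argument works verbatim for vector-valued functions such as $V_\sigma\otimes V_\tau$.

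The only delicate point is the choice of the local chart. In the chart, $Y$ must be a coordinate subspace, so that shrinking the circle in one normal coordinate avoids $Y$ while staying inside the chart. This is exactly where the hypotheses that $Y$ is a submanifold and that $\dim Y<\dim X$ (so that $c\ge1$) are used. Hartogs-type extension is not needed here, because the functions are already assumed holomorphic on all of $X$.
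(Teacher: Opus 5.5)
Your proof is correct and follows essentially the same route as the paper: a local sup-estimate in coordinates adapted to $Y$ that moves the supremum onto a compact set avoiding $Y$, a finite covering argument showing the seminorms on $\cO(X)$ are dominated by seminorms from $\cO(X\setminus Y)$, and then a Cauchy-sequence argument for closedness. The only difference is that the paper uses the maximum modulus principle on $U_1\times\partial U_2$, whereas you use the one-variable Cauchy formula on a circle in a single normal coordinate. Your version stays strictly inside the chart and so avoids any boundary-regularity issues; your constant $4$ could even be improved to $3$, or to $1$ by maximum modulus on the disc.
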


\begin{proof}
Suppose $\dim X=n$, $\dim Y=m$. For any point $p\in Y$, we can take a relatively compact open neighborhood $U\subset X$ of $p$ and a holomorphic coordinate $\varphi\colon U\overset{\sim}{\to}U_1\times U_2\subset\BC^m\times \BC^{n-m}$ such that $Y\cap U=\varphi^{-1}(U_1\times \{0\})$ holds. 
For $\varepsilon>0$, let $B_0(\varepsilon)\subset\BC^{n-m}$ be the closed ball with radius $\varepsilon$ centered at the origin. 
We take a sufficiently small $\varepsilon>0$ such that $B_0(\varepsilon)\subset U_2$ holds, and let $U(\varepsilon):=\varphi^{-1}(U_1\times(U_2\setminus B_0(\varepsilon)))$, so that $U(\varepsilon)\subset X\setminus Y$ is relatively compact. 
Then for $f\in\cO(X)$, by the maximum modulus principle, we have 
\begin{align*}
\sup_{x\in U}|f(x)|=\sup_{(x_1,x_2)\in U_1\times\partial U_2}|f(\varphi^{-1}(x_1,x_2))|=\sup_{x\in U(\varepsilon)}|f(x)|. 
\end{align*}
Next, we take any relatively compact open set $K\subset X$. 
Then since $K$ is covered by a finite number of coordinate neighborhoods around $Y$ and a relatively compact set in $X\setminus Y$, 
we can find a relatively compact set $K'\subset X\setminus Y$ such that 
$\sup_{x\in K}|f(x)|\le \sup_{x\in K'}|f(x)|$ holds for any $f\in\cO(X)$. 
Hence the topology on $\cO(X)$ coincides with the relative topology induced from $\cO(X\setminus Y)$. 

Next, suppose $f\in\cO(X\setminus Y)$ lies in the closure of $\cO(X)$, and we take a sequence $\{f_j\}\subset\cO(X)$ converging to $f$ in the topology of $\cO(X\setminus Y)$. 
Then $\{f_j\}$ is a Cauchy sequence in $\cO(X\setminus Y)$, and by the previous discussion, this is also a Cauchy sequence in the topology on $\cO(X)$, and thus converges to some $f_\infty\in\cO(X)$. Then we have $f=f_\infty$ on $X\setminus Y$, and hence we get $f\in\cO(X)$. 
Therefore $\cO(X)\subset\cO(X\setminus Y)$ is closed. 
\end{proof}

For $j\in\{0,\ldots,r\}$, let 
\[ (D\times D)_j:=\{(x,y)\in D\times D\mid\rank(x-y)\ge j\}, \]
so that $(D\times D)_j\setminus(D\times D)_{j+1}\subset (D\times D)_{j}$ is a closed complex submanifold. Then by applying the above lemma to 
\[ \cO(D\times D)\subset\cO((D\times D)_1)\subset\cdots\subset\cO((D\times D)_r)=\cO((D\times D)^\times), \]
$\cO(D\times D)\subset\cO((D\times D)^\times)$ is a closed subspace, and the topology on $\cO(D\times D)$ coincides with the relative topology induced from $\cO((D\times D)^\times)$. 

\begin{proof}[Proof of Theorem \ref{main_thm2}]
By the assumption on the irreducibility of $(\hat{\rho},\cO(D,V_\rho))$, we have 
\[ \cO(D,V_\rho)_{\widetilde{K}}=d\hat{\rho}(\cU(\fg^\BC))V_\rho, \]
where $\cU(\fg^\BC)$ is the universal enveloping algebra of $\fg^\BC$, and by Lemmas \ref{lem_holomorphic} and \ref{lem_const_func}, we have 
\[ \cF^{\sigma,\tau}_{\rho,\rK\uparrow}(\cO(D,V_\rho)_{\widetilde{K}})
=\cF^{\sigma,\tau}_{\rho,\rK\uparrow}(d\hat{\rho}(\cU(\fg))V_\rho)
=d(\hat{\sigma}\otimes\hat{\tau})\cF^{\sigma,\tau}_{\rho,\rK\uparrow}(V_\rho)
\subset\cO(D\times D,V_\sigma\otimes V_\tau). \]
Then since $\cO(D,V_\rho)_{\widetilde{K}}\subset\cO(D,V_\rho)$ is dense, $\cF^{\sigma,\tau}_{\rho,\rK\uparrow}\colon\cO(D,V_\rho)\to\cO((D\times D)^\times,V_\sigma\otimes V_\tau)$ is continuous and $\cO(D\times D,V_\sigma\otimes V_\tau)\subset\cO((D\times D)^\times,V_\sigma\otimes V_\tau)$ is closed, 
we get $\cF^{\sigma,\tau}_{\rho,\rK\uparrow}(\cO(D,V_\rho))\subset\cO(D\times D,V_\sigma\otimes V_\tau)$. 
\end{proof}

\section{Computation of images of minimal $\widetilde{K}$-types when $\sigma,\tau$ are one-dimensional}\label{section_minKtype}

In this section, we consider the case $\sigma=\chi^{-\lambda}$, $\tau=\chi^{-\mu}$ are one-dimensional. As before, let $\rank\fp^+=:r$, $\dim\fp^+=:n=r+\frac{d}{2}r(r-1)$. 
We recall from Corollary \ref{cor_scalar1} that, 
for $\lambda,\mu\in\BC$, $\bk\in\BZ_{++}^r$ with $\Re\lambda, \Re\mu>-k_r+\frac{n}{r}-1$ and for a fixed polynomial 
\[ \rK_\bk(x)\in\bigl(\cP(\fp^+)\otimes \Hom_\BC(V_{\bk\gamma}^\vee,\BC)\bigr)^K, \]
where $(\rho_{\bk\gamma}^\vee,V_{\bk\gamma}^\vee)$ is the irreducible representation of $K$ with the lowest weight $-(k_1\gamma_1+\cdots+k\gamma_r)\in(\ft^\BC)^\vee$, the map 
\begin{gather}
\cF^{\lambda,\mu}_{\bk\uparrow}\colon\cO_{\lambda+\mu}(D,V_{\bk\gamma}^\vee)\longrightarrow\cO_\lambda(D)\hotimes\cO_\mu(D), \notag\\
\begin{split}
(\cF^{\lambda,\mu}_{\bk\uparrow} f)(x,y):=\det(x-y)^{-\lambda-\mu+\frac{n}{r}}\int_{C(x,y)} \det(w-y)^{\lambda-\frac{n}{r}}\det(x-w)^{\mu-\frac{n}{r}} \\
{}\times \rK_\bk\bigl(((w-y)^{-1}+(x-w)^{-1})^{-1}\bigr) f(w)\,dw 
\end{split}\label{formula_HO1}
\end{gather}
becomes a holographic operator, which is unique up to a constant multiple. 
Hence the normalization of holographic operators is determined by the images of minimal $\widetilde{K}$-types, i.e., constant functions. 
The goal of this section is to compute $\cF^{\lambda,\mu}_{\bk\uparrow}v$ for constant functions $v\in V_{\bk\gamma}^\vee$. 

\begin{theorem}\label{thm_minKtype}
Let $\lambda,\mu\in\BC$, $\bk\in\BZ_{++}^r$, $\Re\lambda, \Re\mu>-k_r+\frac{n}{r}-1$. 
For constant functions $v\in V_{\bk\gamma}^\vee$, we have 
\[ (\cF^{\lambda,\mu}_{\bk\uparrow} v)(x,y)=B_r(\lambda,\mu,\bk)\rK_\bk(x-y)v, \]
where 
\begin{multline}
B_r(\lambda,\mu,\bk):=(2\pi)^{dr(r-1)/4} \\
{}\times\prod_{j=1}^r\frac{\Gamma\bigl(\lambda+k_j-\frac{d}{2}(j-1)\bigr)
\Gamma\bigl(\mu+k_j-\frac{d}{2}(j-1)\bigr)}{\Gamma\bigl(\lambda+\mu-\frac{d}{2}(j-1)\bigr)}
\frac{\prod_{1\le i<j\le r}\bigl(\lambda+\mu-\frac{d}{2}(i+j-1)\bigr)_{k_i+k_j}}
{\prod_{1\le i\le j\le r}\bigl(\lambda+\mu-\frac{d}{2}(i+j-2)\bigr)_{k_i+k_j}}. \label{Beta_vect}
\end{multline}
\end{theorem}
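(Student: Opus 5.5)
The plan is to use the $\widetilde{K}$-equivariance and the translation formula of Lemma \ref{lem_const_func} to show that $\cF^{\lambda,\mu}_{\bk\uparrow}v$ is a scalar multiple of $\rK_\bk(x-y)v$. The theorem then reduces to computing one scalar, which is a beta-type integral over $\Omega\cap(e-\Omega)$.

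\textbf{Step 1 (proportionality).} By Lemma \ref{lem_const_func}\,(1), $(\cF^{\lambda,\mu}_{\bk\uparrow}v)(x,y)=(\cF^{\lambda,\mu}_{\bk\uparrow}v)(x-y,0)$, and $x\mapsto(\cF^{\lambda,\mu}_{\bk\uparrow}v)(x,0)$ continues holomorphically to $\fp^{+\times}$. Lemma \ref{lem_intertwining} applied to $g\in\widetilde{K}$ shows that $v\mapsto(\cF^{\lambda,\mu}_{\bk\uparrow}v)(\cdot,0)$ lies in
\[ \Hom_{\widetilde{K}}\bigl(\chi^{-\lambda-\mu}\otimes V_{\bk\gamma}^\vee,\,\chi^{-\lambda-\mu}\otimes\cP(\fp^+)[\det^{-1}]\bigr). \]
By (\ref{formula_HKS}) this space has multiplicity one and is spanned by $v\mapsto\rK_\bk(x)v$. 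Hence $(\cF^{\lambda,\mu}_{\bk\uparrow}v)(x,y)=c\,\rK_\bk(x-y)v$ for a single constant $c=c(\lambda,\mu,\bk)$, and it remains to show $c=B_r(\lambda,\mu,\bk)$.

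\textbf{Step 2 (reduction to an integral over $\Omega\cap(e-\Omega)$).} I evaluate at $(x,y)=(e,0)$, using the holomorphic continuation to $\fp^{+\times}$, or equivalently at $(te,0)$ for small $t>0$ followed by homogeneity. By Lemma \ref{lem_ori_contour}\,(1), $C(te,0)=\Omega\cap(te-\Omega)$. The computation in the proof of Lemma \ref{lem_conti} gives $(w^{-1}+(e-w)^{-1})^{-1}=w-P(w)e=w\circ(e-w)$. Therefore
\[ c\,\rK_\bk(e)v=\int_{\Omega\cap(e-\Omega)}\det(w)^{\lambda-\frac{n}{r}}\det(e-w)^{\mu-\frac{n}{r}}\rK_\bk\bigl(w\circ(e-w)\bigr)v\,dw . \]
I pair both sides with a $K_L$-fixed vector, so that $\langle\rK_\bk(x)v,v'\rangle$ becomes a multiple of the spherical polynomial $\Phi_\bk$ with $\Phi_\bk(e)=1$. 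This gives
\[ c=\int_{\Omega\cap(e-\Omega)}\det(w)^{\lambda-\frac{n}{r}}\det(e-w)^{\mu-\frac{n}{r}}\Phi_\bk\bigl(w\circ(e-w)\bigr)\,dw . \]
Since $w$ and $e-w$ commute, polar coordinates $w=k.\sum_ja_je_j$ with $k\in K_L$, as in the proof of Lemma \ref{lem_conti} via \cite[Theorem VI.2.3]{FK}, turn this into a Selberg-type integral over $1>a_1>\cdots>a_r>0$:
\[ \int\prod_j a_j^{\lambda-\frac{n}{r}}(1-a_j)^{\mu-\frac{n}{r}}\,J_\bk\bigl(a_1(1-a_1),\ldots,a_r(1-a_r)\bigr)\prod_{i<j}(a_i-a_j)^d\,da, \]
where $J_\bk$ is the Jack polynomial corresponding to $\Phi_\bk$.

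\textbf{Step 3 (evaluation, the main obstacle).} The Kadell--Kaneko Selberg integral handles $J_\bk(a)$, but here the argument is $a(1-a)$, so it does not apply directly. I would first try to avoid this by a Laplace-transform/Gindikin-Gamma trick that exploits how $w\circ(e-w)$ arises from $z^{-1}=w^{-1}+(e-w)^{-1}$. I would regard $\rK_\bk(z)$ as a negative generalized power of $z^{-1}$, write it (after analytic continuation in $\bk$ or in $\lambda,\mu$) as a Laplace integral $\int_\Omega e^{-(z^{-1}|\xi)}(\cdots)\,d\xi$, and note that the exponential factorizes into $e^{-(w^{-1}|\xi)}e^{-((e-w)^{-1}|\xi)}$. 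Then:
\begin{enumerate}
\item the $w$-integral becomes a convolution of two Bessel/Gamma-type transforms on $\Omega$;
\item each of these is evaluated by $\Gamma_\Omega$ of generalized powers;
\item the product of shifted Gamma factors is recombined, which should produce the factors $\Gamma(\lambda+k_j-\frac d2(j-1))\Gamma(\mu+k_j-\frac d2(j-1))/\Gamma(\lambda+\mu-\frac d2(j-1))$ together with the Pochhammer ratio in (\ref{Beta_vect}).
\end{enumerate}
The Pochhammer ratio $\prod_{i<j}(\cdot)_{k_i+k_j}/\prod_{i\le j}(\cdot)_{k_i+k_j}$ looks like a norm of $\rK_\bk$ in a weighted Bergman or Fock space. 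As a fallback I would therefore determine $c$ by a second route: compose $\cF^{\lambda,\mu}_{\bk\uparrow}$ with the symmetry breaking operator of Theorem \ref{thm_int_expr_D}, whose operator norms for scalar $\sigma,\tau$ are known from \cite[Corollary 8.7]{N3}, and extract $c$ by comparing the two formulas on the minimal $\widetilde{K}$-type.

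As a consistency check in either route, the case $r=1$ must reproduce the classical Beta integral $\int_0^1a^{\lambda+k-1}(1-a)^{\mu+k-1}\,da$, that is, Theorem \ref{thm_intro}.
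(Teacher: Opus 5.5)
Your Steps 1 and 2 are correct and match the paper's reduction. Equivariance plus Lemma \ref{lem_const_func} give $(\cF^{\lambda,\mu}_{\bk\uparrow}v)(x,y)=c\,\rK_\bk(x-y)v$, and $c$ is the integral of $\det(w)^{\lambda-\frac{n}{r}}\det(e-w)^{\mu-\frac{n}{r}}\Phi_\bk(w\circ(e-w))$ over $\Omega\cap(e-\Omega)$. (The paper uses the lowest weight vector and $\Delta_\bk$ rather than a $K_L$-fixed vector and $\Phi_\bk$. Also, you should take $v$ itself to be $K_L$-fixed, since $\rK_\bk(x)v$ is already a scalar.)

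However, the theorem is precisely the evaluation of this constant, and Step 3 does not provide it.
\begin{itemize}
\item If you write $\rK_\bk(z)$ as $\int_\Omega e^{-(z^{-1}|\xi)}(\cdots)\,d\xi$, the remaining $w$-integral over the bounded set $\Omega\cap(e-\Omega)$ carries $e^{-(w^{-1}|\xi)-((e-w)^{-1}|\xi)}$ and does not factor.
\item Undoing the convolution by a Laplace transform produces $\int_\Omega e^{-(w|y)-(w^{-1}|\xi)}\det(w)^{\lambda-\frac{n}{r}}\,dw$. This is a Bessel function of the cone (already in rank one a modified Bessel function of the second kind), not a Gindikin Gamma function. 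So your item 2 fails, and nothing in the sketch accounts for the Pochhammer ratio in (\ref{Beta_vect}).
\item Your fallback is circular. The known norm of the symmetry breaking operator only determines $\tilde{\cF}_{\downarrow}\circ\tilde{\cF}_{\uparrow}$. To extract $c$ from $\tilde{\cF}_{\downarrow}\circ\cF^{\lambda,\mu}_{\bk\uparrow}=c\,\tilde{\cF}_{\downarrow}\circ\tilde{\cF}_{\uparrow}$ you would still have to evaluate $\cF^{\lambda,\mu}_{\bk\uparrow}v$, now integrated over $D\times D$, which is the original unknown.
\end{itemize}

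What is missing is the paper's three-step computation.
\begin{enumerate}
\item Use the full $x$-dependence from Step 1, $(\cF^{\lambda,\mu}_{\bk\uparrow}v_0)(x,0)=c\,\Delta_\bk(x)$ on $\Omega$, and Laplace-transform in the endpoint $x$. That is, multiply by $e^{-\tr(x)}\det(x)^{\lambda+\mu+k_0-\frac{n}{r}}$ with an integer $k_0\ge k_1$ and integrate over $\Omega$. By (\ref{formula_Gamma}) the left side becomes $c\,\Gamma_r(\lambda+\mu+k_0+\bk)$. On the right, the substitution $x=w+z$ turns $\int_\Omega dx\int_{\Omega\cap(x-\Omega)}dw$ into $\iint_{\Omega\times\Omega}dw\,dz$.
\item By (\ref{formula_det_inv2}) and (\ref{formula_Delta_inv}), $\det(w+z)^{k_0}\Delta_\bk((w^{-1}+z^{-1})^{-1})=\det(w)^{k_0}\det(z)^{k_0}\check{\Delta}_{\underline{k_0}-\bk^\vee}(w^{-1}+z^{-1})$, which is a polynomial in $(w^{-1},z^{-1})$. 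Split $\check{\Delta}_{\underline{k_0}-\bk^\vee}(w+z)=\sum_{\bm,\bn}f_{\bm,\bn}(w,z)$ into $\cP_\bm(\fp^+)\otimes\cP_\bn(\fp^+)$-components. Each term then factors into two Gamma integrals, and $c$ becomes an explicit multiple of $\sum_{\bm,\bn}f_{\bm,\bn}(e,e)/\bigl((\alpha)_\bm(\beta)_\bn\bigr)$ with $\alpha=-\lambda-k_0+\frac{n}{r}$ and $\beta=-\mu-k_0+\frac{n}{r}$.
\item This sum is given by the restriction formula (\ref{formula_inner_eq}) from \cite[Theorem 8.4]{N3}, continued rationally in $\alpha,\beta$. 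This is where the ratio $\prod_{i<j}(\cdot)_{k_i+k_j}/\prod_{i\le j}(\cdot)_{k_i+k_j}$ comes from.
\end{enumerate}
Your guess that this ratio is a weighted Bergman-type norm is right. But it enters through this restriction formula for $f(w+z)$, not through a Selberg integral or the operator norm of a symmetry breaking operator.
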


Here, $(\lambda)_k:=\lambda(\lambda+1)\cdots(\lambda+k-1)$. 
We recall another holographic operator from Theorem \ref{thm_int_expr_D}, 
\begin{gather}
\tilde{\cF}^{\lambda,\mu}_{\bk\uparrow}\colon\cH_{\lambda+\mu}(D,V_{\bk\gamma}^\vee)\longrightarrow \cH_\lambda(D)\hotimes\cH_\mu(D), \notag\\
(\tilde{\cF}^{\lambda,\mu}_{\bk\uparrow}f)(x,y):=\langle f,\hat{\rK}^{\lambda,\mu}_\bk(x,y;\overline{\cdot})^*\rangle_{\cH_{\lambda+\mu}(D,V_{\bk\gamma}^\vee)}, \label{formula_HO2}
\end{gather}
where 
\begin{align*}
\hat{\rK}^{\lambda,\mu}_\bk(x,y;\overline{w}):=h(x,\overline{w})^{-\lambda}h(y,\overline{w})^{-\mu}\rK_\bk(x^{\overline{w}}-y^{\overline{w}})
\in\cO(D\times D\times\overline{D},\Hom_\BC(V_{\bk\gamma}^\vee,\BC)). 
\end{align*}
If $f(w)=v\in V_{\bk\gamma}^\vee\subset\cH_{\lambda+\mu}(D,V_{\bk\gamma}^\vee)$ is a constant function, then since $v$ is orthogonal to homogeneous terms of degree $\ge 1$, 
by the normalization of the inner product, we have 
\begin{align*}
(\tilde{\cF}^{\lambda,\mu}_{\bk\uparrow}v)(x,y)
&=\langle v,\hat{\rK}^{\lambda,\mu}_\bk(x,y;\overline{\cdot})^*\rangle_{\cH_{\lambda+\mu}(D,V_{\bk\gamma}^\vee)}
=(v,\hat{\rK}^{\lambda,\mu}_\bk(x,y;0)^*)_{V_{\bk\gamma}^\vee} \\
&=(v,\rK_\bk(x-y)^*)_{V_{\bk\gamma}^\vee}=\rK_\bk(x-y)v. 
\end{align*}
Hence by Theorem \ref{thm_minKtype}, we immediately get the following. 

\begin{corollary}
Let $\lambda,\mu\in\BR$, $\bk\in\BZ_{++}^r$, $\lambda, \mu>\frac{2n}{r}-1$. 
The two holographic operators 
\[ \cF^{\lambda,\mu}_{\bk\uparrow},\tilde{\cF}^{\lambda,\mu}_{\bk\uparrow}\colon \cH_{\lambda+\mu}(D,V_{\bk\gamma}^\vee)\longrightarrow \cH_\lambda(D)\hotimes\cH_\mu(D) \]
in (\ref{formula_HO1}), (\ref{formula_HO2}) are related as 
\[ \cF^{\lambda,\mu}_{\bk\uparrow} =B_r(\lambda,\mu,\bk)\tilde{\cF}^{\lambda,\mu}_{\bk\uparrow}, \]
where $B_r(\lambda,\mu,\bk)$ is as in (\ref{Beta_vect}). 
\end{corollary}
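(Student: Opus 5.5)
The plan is to avoid any multiplicity-one argument and compare the two operators directly on the minimal $\widetilde{K}$-type. By Theorem \ref{thm_minKtype} together with the computation just above for $\tilde{\cF}^{\lambda,\mu}_{\bk\uparrow}$, the two operators agree up to the factor $B_r(\lambda,\mu,\bk)$ on constants. Concretely,
\[ (\cF^{\lambda,\mu}_{\bk\uparrow}v)(x,y)=B_r(\lambda,\mu,\bk)\rK_\bk(x-y)v=B_r(\lambda,\mu,\bk)(\tilde{\cF}^{\lambda,\mu}_{\bk\uparrow}v)(x,y) \qquad (v\in V_{\bk\gamma}^\vee). \]
The remaining work is to propagate this identity from $V_{\bk\gamma}^\vee$ to all of $\cH_{\lambda+\mu}(D,V_{\bk\gamma}^\vee)$. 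First I check that the hypotheses of Theorem \ref{thm_minKtype} hold: $\lambda,\mu>\frac{2n}{r}-1$ certainly implies $\lambda,\mu>-k_r+\frac{n}{r}-1$.

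\emph{Step 1 (extension to $\widetilde{K}$-finite vectors).} Since $\lambda+\mu>\frac{2n}{r}-1$, $\cH_{\lambda+\mu}(D,V_{\bk\gamma}^\vee)$ is a holomorphic discrete series representation. Hence its $\widetilde{K}$-finite part is $\cP(\fp^+)\otimes V_{\bk\gamma}^\vee$, which is an irreducible $(\fg,\widetilde{K})$-module, so it equals $d\hat{\rho}(\cU(\fg^\BC))V_{\bk\gamma}^\vee$. Both operators intertwine the $\fg$-actions on smooth vectors: $\cF^{\lambda,\mu}_{\bk\uparrow}$ by Corollary \ref{cor_scalar1} (its image lies in $\cO(D\times D)$ by Theorem \ref{main_thm2}), and $\tilde{\cF}^{\lambda,\mu}_{\bk\uparrow}$ by Theorem \ref{thm_int_expr_D}. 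Therefore for $X\in\cU(\fg^\BC)$ we get
\[ \cF^{\lambda,\mu}_{\bk\uparrow}d\hat{\rho}(X)v=d(\hat{\chi}^{-\lambda}\otimes\hat{\chi}^{-\mu})(X)\cF^{\lambda,\mu}_{\bk\uparrow}v=B_r(\lambda,\mu,\bk)\,\tilde{\cF}^{\lambda,\mu}_{\bk\uparrow}d\hat{\rho}(X)v. \]
So the identity holds on the whole $\widetilde{K}$-finite part.

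\emph{Step 2 (density).} The $\widetilde{K}$-finite vectors are dense in $\cH_{\lambda+\mu}(D,V_{\bk\gamma}^\vee)$. The operator $\tilde{\cF}^{\lambda,\mu}_{\bk\uparrow}$ is continuous into $\cH_\lambda(D)\hotimes\cH_\mu(D)$, hence into $\cO(D\times D)$, because point evaluations are continuous on reproducing kernel spaces. The operator $\cF^{\lambda,\mu}_{\bk\uparrow}$ is continuous from $\cO(D,V_{\bk\gamma}^\vee)$ to $\cO(D\times D)$ by Lemma \ref{lem_conti} (take $m'=m\ge 0$ here, since $\lambda_1=\lambda_r$ and $\mu_1=\mu_r$). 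The inclusion $\cH\hookrightarrow\cO$ is continuous. Thus both sides are continuous maps $\cH_{\lambda+\mu}(D,V_{\bk\gamma}^\vee)\to\cO(D\times D)$ that agree on a dense subspace, so they agree everywhere. This gives the equality, and in particular shows that $\cF^{\lambda,\mu}_{\bk\uparrow}$ maps into $\cH_\lambda(D)\hotimes\cH_\mu(D)$.

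The only delicate point is Step 1: that $V_{\bk\gamma}^\vee$ generates the $\widetilde{K}$-finite part, and that the intertwining property of $\cF^{\lambda,\mu}_{\bk\uparrow}$, established at the group level, differentiates to the $\fg$-level on these vectors. The first follows from the irreducibility of holomorphic discrete series. The second follows because $\cF^{\lambda,\mu}_{\bk\uparrow}$ is continuous and $\widetilde{G}$-equivariant on $\cO(D,V_{\bk\gamma}^\vee)$, on which $\widetilde{K}$-finite vectors are smooth. This is exactly the same mechanism already used in the proof of Theorem \ref{main_thm2}.
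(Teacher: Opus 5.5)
Your proof is correct. Its core input is the same as the paper's: Theorem \ref{thm_minKtype}, together with the evaluation $(\tilde{\cF}^{\lambda,\mu}_{\bk\uparrow}v)(x,y)=\rK_\bk(x-y)v$ on constants.

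The difference lies in how you pass from the minimal $\widetilde{K}$-type to the whole space.

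\textbf{The paper's route.} It does this in one line, by invoking the uniqueness of the holographic operator up to a scalar. This uniqueness comes from the multiplicity-freeness of $\cH_\lambda(D)\hotimes\cH_\mu(D)$ in Kobayashi's decomposition. It gives $\cF^{\lambda,\mu}_{\bk\uparrow}=c\,\tilde{\cF}^{\lambda,\mu}_{\bk\uparrow}$, and $c$ is then read off from the constants.

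\textbf{Your route.} You avoid multiplicity one altogether:
\begin{itemize}
\item Irreducibility of the holomorphic discrete series gives $\cH_{\lambda+\mu}(D,V_{\bk\gamma}^\vee)_{\widetilde{K}}=d\hat{\rho}(\cU(\fg^\BC))V_{\bk\gamma}^\vee$.
\item Equivariance then propagates the identity to all $\widetilde{K}$-finite vectors.
\item Both operators are continuous into $\cO(D\times D)$: for $\cF^{\lambda,\mu}_{\bk\uparrow}$ by Lemma \ref{lem_conti} with $m'=m\ge 0$, and for $\tilde{\cF}^{\lambda,\mu}_{\bk\uparrow}$ by the reproducing-kernel bound. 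Density of $\widetilde{K}$-finite vectors finishes the argument.
\end{itemize}

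\textbf{What each buys.} Your argument is an explicit proof of the paper's remark that the normalization is determined by the images of minimal $\widetilde{K}$-types. It has two advantages:
\begin{itemize}
\item It does not rely on the uniqueness statement. Strictly speaking, that statement has to be applied to $\cF^{\lambda,\mu}_{\bk\uparrow}$ as a map into $\cO_\lambda(D)\hotimes\cO_\mu(D)$ rather than into the Hilbert tensor product.
\item It yields, as a byproduct, that $\cF^{\lambda,\mu}_{\bk\uparrow}$ maps $\cH_{\lambda+\mu}(D,V_{\bk\gamma}^\vee)$ boundedly into $\cH_\lambda(D)\hotimes\cH_\mu(D)$. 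The corollary's statement tacitly assumes this.
\end{itemize}
The paper's route is shorter once multiplicity-freeness is taken as known.
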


In the rest of this section, we give a proof of Theorem \ref{thm_minKtype}. First, we prepare some notations. 
For $\bk\in\BZ_{++}^r$, let $\cP_\bk(\fp^+)\subset\cP(\fp^+)$ be the irreducible subrepresentation of $K^\BC$ isomorphic to $V_{\bk\gamma}^\vee$, so that 
\[ \cP(\fp^+)=\bigoplus_{\bk\in\BZ_{++}^r}\cP_\bk(\fp^+) \simeq\bigoplus_{\bk\in\BZ_{++}^r}V_{\bk\gamma}^\vee, \]
and let $\Delta_\bk(z), \check{\Delta}_\bk(z)\in\cP_\bk(\fp^+)$ be the lowest and highest weight vectors respectively such that $\Delta_\bk(e)=\check{\Delta}_\bk(e)=1$ holds. 
Let $M_LA_LN_L, M_LA_LN_L^-\subset L$ be the minimal parabolic subgroups as in Section \ref{subsection_root}. Then $A_LN_L, A_LN_L^-$ acts simply transitively on $\Omega$, and we have 
\begin{align*}
\Delta_\bk(b.z)&=\Delta_\bk(b.e)\Delta_\bk(z) && (z\in\fp^+,\;b\in A_LN_L^-), \\ 
\check{\Delta}_\bk(b.z)&=\check{\Delta}_\bk(b.e)\check{\Delta}_\bk(z) && (z\in\fp^+,\;b\in A_LN_L).  
\end{align*}
Also, for $k_0\in\BZ_{\ge 0}$, $\bk\in\BZ_{++}^r$, let $\underline{k_0}:=(\overbrace{k_0,\ldots,k_0}^r)$, $\bk^\vee:=(k_r,k_{r-1},\ldots,k_1)$. 
Then for $k_0\ge k_1$, we have 
\begin{equation}\label{formula_Delta_inv}
\det(z)^{k_0}\Delta_\bk(z^{-1})=\check{\Delta}_{\underline{k_0}-\bk^\vee}(z). 
\end{equation}
For $\lambda\in\BC$, $\bk\in\BC^r$, $\bm\in(\BZ_{\ge 0})^r$, let 
\begin{align}
\Gamma_r(\lambda+\bk)&:=(2\pi)^{dr(r-1)/4}\prod_{j=1}^r \Gamma\biggl(\lambda+k_j-\frac{d}{2}(j-1)\biggr), \label{Gamma}\\
(\lambda+\bk)_\bm&:=\frac{\Gamma_r(\lambda+\bk+\bm)}{\Gamma_r(\lambda+\bk)}
=\prod_{j=1}^r\biggl(\lambda+k_j-\frac{d}{2}(j-1)\biggr)_{m_j}, \label{Pochhammer}
\end{align}
and let $\Gamma_r(\lambda):=\Gamma_r(\lambda+(0,\ldots,0))$, $(\lambda)_\bm:=(\lambda+(0,\ldots,0))_\bm$, so that we have 
\[ (\lambda+\bk)_\bm=(-1)^{|\bm|}\biggl(-\lambda+\frac{n}{r}-\bk^\vee-\bm^\vee\biggr)_{\bm^\vee}. \]
If $\Re\lambda>-k_r+\frac{n}{r}-1$, $\bk\in\BZ_{++}^r$, $f(x)\in\cP_\bk(\fp^+)$, then by \cite[Theorem VII.1.1, Lemma XI.2.3]{FK}, we have 
\begin{align}
\int_\Omega e^{-\tr(x)}f(x)\det(x)^{\lambda-\frac{n}{r}}\,dx=\Gamma_r(\lambda+\bk)f(e). \label{formula_Gamma}
\end{align}
Using $\Gamma_r(\lambda+\bk)$, (\ref{Beta_vect}) is rewritten as 
\[ B_r(\lambda,\mu,\bk)=
\frac{\Gamma_r(\lambda+\bk)\Gamma_r(\mu+\bk)}{\Gamma_r(\lambda+\mu)}
\frac{\prod_{1\le i<j\le r}\bigl(\lambda+\mu-\frac{d}{2}(i+j-1)\bigr)_{k_i+k_j}}
{\prod_{1\le i\le j\le r}\bigl(\lambda+\mu-\frac{d}{2}(i+j-2)\bigr)_{k_i+k_j}}. \]
Especially, if $\bk=(l,\ldots,l)$, then we have 
\begin{align*}
B_r(\lambda,\mu,(l,\dots,l))
&=\frac{\Gamma_r(\lambda+l)\Gamma_r(\mu+l)}{\Gamma_r(\lambda+\mu)}
\frac{\prod_{1\le i<j\le r}\bigl(\lambda+\mu-\frac{d}{2}(i+j-1)\bigr)_{2l}}
{\prod_{1\le i\le j\le r}\bigl(\lambda+\mu-\frac{d}{2}(i+j-2)\bigr)_{2l}} \\
&=\frac{\Gamma_r(\lambda+l)\Gamma_r(\mu+l)}{\Gamma_r(\lambda+\mu)}
\frac{\prod_{1\le i<j\le r}\bigl(\lambda+\mu-\frac{d}{2}(i+j-1)\bigr)_{2l}}
{\prod_{0\le i< j\le r}\bigl(\lambda+\mu-\frac{d}{2}(i+j-1)\bigr)_{2l}} \\
&=\frac{\Gamma_r(\lambda+l)\Gamma_r(\mu+l)}{\Gamma_r(\lambda+\mu)}
\frac{1}{\prod_{j=1}^r\bigl(\lambda+\mu-\frac{d}{2}(j-1)\bigr)_{2l}} \\
&=\frac{\Gamma_r(\lambda+l)\Gamma_r(\mu+l)}{\Gamma_r(\lambda+\mu)(\lambda+\mu)_{\underline{2l}}}
=\frac{\Gamma_r(\lambda+l)\Gamma_r(\mu+l)}{\Gamma_r(\lambda+\mu+2l)}, 
\end{align*}
which coincides with the beta function in \cite[Theorem VII.1.7]{FK}. 

Next, for $\alpha>\frac{2n}{r}-1$, let $\langle f_1,f_2\rangle_\alpha=\langle f_1(z),f_2(z)\rangle_{\alpha,z}$ be the $\widetilde{G}$-invariant inner product on $\cH_\alpha(D)\subset\cO_\alpha(D)$ given in (\ref{inner_prod}) with respect to the variable $z$. Then the following formulas hold. 

\begin{lemma}[{Faraut--Kor\'anyi \cite{FK0}, \cite[Corollary XIII.2.3]{FK}}]\label{lem_FK}
For $\alpha>\frac{2n}{r}-1$, $\bm\in\BZ_{++}^r$ and $f(z)\in\cP_\bm(\fp^+)$, we have 
\[ \bigl\langle f(z),e^{(z|\overline{x})}\bigr\rangle_{\alpha,z}=\frac{1}{(\alpha)_\bm}f(x). \]
\end{lemma}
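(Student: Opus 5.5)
The plan is to compare two $K$-invariant inner products on $\cP_\bm(\fp^+)$: the one from $\cH_\alpha(D)$ and the Fischer inner product
\[ \langle p,q\rangle_F:=\frac{1}{\pi^n}\int_{\fp^+}p(z)\overline{q(z)}e^{-(z|\overline{z})}\,dz. \]
The exponential $e^{(z|\overline{x})}$ is the reproducing kernel of $\langle\cdot,\cdot\rangle_F$, and it splits as $e^{(z|\overline{x})}=\sum_{\bm\in\BZ_{++}^r}K_\bm(z,\overline{x})$. Here $K_\bm(z,\overline{x})\in\cP_\bm(\fp^+)\otimes\overline{\cP_\bm(\fp^+)}$ is the reproducing kernel of $(\cP_\bm(\fp^+),\langle\cdot,\cdot\rangle_F)$. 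This splitting holds because the $\cP_\bm(\fp^+)$ are mutually $\langle\cdot,\cdot\rangle_F$-orthogonal, being inequivalent $K$-types by (\ref{formula_HKS}).

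First I would check that the $\cP_\bm(\fp^+)$ are also mutually orthogonal in $\cH_\alpha(D)$, and that the restriction of $\langle\cdot,\cdot\rangle_\alpha$ to each one is a scalar multiple $c_\bm\langle\cdot,\cdot\rangle_F$. Both facts follow from Schur's lemma. The inner product (\ref{inner_prod}) is $K$-invariant because $h(kx,\overline{kx})=h(x,\overline{x})$ for $k\in K$, and the decomposition (\ref{formula_HKS}) is multiplicity free. Then for $f\in\cP_\bm(\fp^+)$, every component $K_{\bm'}(\cdot,\overline{x})$ with $\bm'\ne\bm$ drops out, and the reproducing property of $K_\bm$ gives
\[ \langle f,e^{(z|\overline{x})}\rangle_{\alpha,z}=c_\bm\langle f,K_\bm(\cdot,\overline{x})\rangle_F=c_\bm f(x). \]
One should check that the series converges in $\cH_\alpha(D)$ so that the pairing can be taken termwise. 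This is harmless: $e^{(z|\overline{x})}$ is entire, so its homogeneous expansion converges uniformly on $\overline{D}$.

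The whole content is therefore the identification $c_\bm=1/(\alpha)_\bm$, and this is the step I expect to be the main obstacle. I would get it from the reproducing kernel of $\cH_\alpha(D)$, which is $h(z,\overline{w})^{-\alpha}$ with the normalization $C_\alpha$ chosen so that $\|1\|_\alpha=1$. On $\cP_\bm(\fp^+)$ the reproducing kernel of $\langle\cdot,\cdot\rangle_\alpha$ is $c_\bm^{-1}K_\bm$. Hence $h(z,\overline{w})^{-\alpha}=\sum_\bm c_\bm^{-1}K_\bm(z,\overline{w})$, and it suffices to prove the Faraut--Kor\'anyi binomial expansion
\[ h(z,\overline{w})^{-\alpha}=\sum_\bm(\alpha)_\bm K_\bm(z,\overline{w}). \]

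By $K$-equivariance and the fact that both sides are determined by their restriction to $w=e$, I would reduce to computing the coefficient of the spherical polynomial. To compute it, I would use the Laplace-transform identity
\[ \det(y)^{-\alpha}=\Gamma_r(\alpha)^{-1}\int_\Omega e^{-(x|y)}\det(x)^{\alpha-\frac{n}{r}}\,dx, \]
applied with $y=e-z$ for $z$ in a neighborhood of $0$ in $\fn^+$. I would then expand $e^{(x|z)}=\sum_\bm K_\bm(x,z)$ and integrate term by term using (\ref{formula_Gamma}). Each term yields $\Gamma_r(\alpha+\bm)/\Gamma_r(\alpha)=(\alpha)_\bm$ times $K_\bm(e,z)$. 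This gives $\det(e-z)^{-\alpha}=\sum_\bm(\alpha)_\bm K_\bm(z,e)$, which is the restriction to $w=e$ of the desired expansion, since $h(z,e)=\det(e-z)$. It extends to general $w$ by $K^\BC$-equivariance and analytic continuation in $w$. Comparing with the previous paragraph gives $c_\bm=1/(\alpha)_\bm$, which proves the lemma.
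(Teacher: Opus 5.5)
The paper does not prove this lemma; it quotes it from Faraut--Kor\'anyi. Your argument is correct in outline; your $K_\bm$ is the paper's $\tilde{\Phi}_\bm$. The Schur-lemma reduction to scalars $c_\bm$ with $\langle\cdot,\cdot\rangle_\alpha=c_\bm\langle\cdot,\cdot\rangle_F$ on $\cP_\bm(\fp^+)$ is right. So is the Laplace-transform computation of the $\cP_\bm(\fp^+)$-components of $\det(e-z)^{-\alpha}$ via (\ref{formula_Gamma}), which is the only place you use the tube-type structure.

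Another standard route computes $\Vert p\Vert_\alpha^2$ for $p\in\cP_\bm(\fp^+)$ directly. Averaging $|p(kz)|^2$ over $K$ gives $\Vert p\Vert_F^2\,\tilde{\Phi}_\bm(z,\overline{z})/\dim\cP_\bm(\fp^+)$. Polar coordinates on $D$ and a beta integral over $\Omega\cap(e-\Omega)$ then give $\Vert p\Vert_\alpha^2=\Vert p\Vert_F^2/(\alpha)_\bm$. That route needs nothing about the reproducing kernel, and it yields $h(z,\overline{w})^{-\alpha}=\sum_\bm(\alpha)_\bm\tilde{\Phi}_\bm(z,\overline{w})$ as a by-product. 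Your route runs the other way, so it needs the kernel as an input.

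That input is the one step you assert without proof: $h(z,\overline{w})^{-\alpha}$ is the reproducing kernel of $\cH_\alpha(D)$ when $\Vert 1\Vert_\alpha=1$. It should be supplied, and it follows from the $\widetilde{G}$-invariance of $\langle\cdot,\cdot\rangle_\alpha$.
\begin{itemize}
\item The kernel $\mathcal{K}$ satisfies $\mathcal{K}(g.z,g.w)=\chi(\kappa(g,z))^{-\alpha}\mathcal{K}(z,w)\overline{\chi(\kappa(g,w))^{-\alpha}}$.
\item By $h(g.z,\overline{g.w})=\chi(\kappa(g,z))h(z,\overline{w})\overline{\chi(\kappa(g,w))}$, the function $h(z,\overline{w})^{-\alpha}$ obeys the same law.
\item Both equal $1$ at $w=0$. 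For $\mathcal{K}$, circle invariance forces $\mathcal{K}(\cdot,0)$ to be the constant $\Vert 1\Vert_\alpha^{-2}=1$.
\item $G$ is transitive on $D$, so the two kernels agree everywhere.
\end{itemize}

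With this in hand you can shorten the end. Evaluate the reproducing identity at $w=te\in D$ with $0<t<1$, where $h(z,te)^{-\alpha}=\det(e-tz)^{-\alpha}$. Pair it with any $f\in\cP_\bm(\fp^+)$ such that $f(e)\ne0$. This gives $t^{|\bm|}f(e)=(\alpha)_\bm c_\bm t^{|\bm|}f(e)$ at once. So you need neither the $K^\BC$-extension in $w$ nor the density of polynomials in $\cH_\alpha(D)$.

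Finally, justify the termwise integration degree by degree, because the $\tilde{\Phi}_\bm(x,z)$ are not of one sign for general $z\in\fn^+$. The degree-$j$ component of $\Gamma_r(\alpha)\det(e-z)^{-\alpha}$ is $\int_\Omega e^{-\tr(x)}\frac{(x|z)^j}{j!}\det(x)^{\alpha-\frac{n}{r}}\,dx$. To see this, differentiate along $t\mapsto tz$ under the integral, using the bound $|(x|z)|\le|z|_\infty\tr(x)$ for $x\in\Omega$ and $z\in\fn^+$. Here $\frac{(x|z)^j}{j!}=\sum_{|\bm|=j}\tilde{\Phi}_\bm(x,z)$ is a finite sum, so (\ref{formula_Gamma}) applies term by term.
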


\begin{lemma}[{\cite[Theorem 8.4]{N3}}]\label{lem_Plancherel}
For $\alpha,\beta>\frac{2n}{r}-1$, $\bl\in\BZ_{++}^r$, $f(z)\in\cP_\bl(\fp^+)$, we have 
\begin{align*}
&\bigl\langle f(w+z),e^{(w+z|\overline{x})}\bigr\rangle_{(\alpha,w)\otimes(\beta,z)}
=\frac{1}{(\alpha)_\bl(\beta)_\bl}\frac{\prod_{1\le i\le j\le r}\bigl(\alpha+\beta-1-\frac{d}{2}(i+j-2)\bigr)_{l_i+l_j}}{\prod_{1\le i<j\le r+1}\bigl(\alpha+\beta-1-\frac{d}{2}(i+j-3)\bigr)_{l_i+l_j}}f(x), 
\end{align*}
where $l_{r+1}:=0$. 
\end{lemma}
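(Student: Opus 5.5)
The plan is to reduce the statement to a scalar identity by Schur's lemma, and then to evaluate that scalar on a lowest weight vector. First, since $e^{(w+z|\overline{x})}=e^{(w|\overline{x})}e^{(z|\overline{x})}$, and since the Faraut--Kor\'anyi expansion gives $e^{(w|\overline{x})}=\sum_{\bm}K_\bm(w,\overline{x})$ with $K_\bm$ the reproducing kernel of $\cP_\bm(\fp^+)$ for the Fischer inner product, Lemma \ref{lem_FK} applied separately in each variable yields the following. For any polynomial $F(w,z)=\sum_{\bm,\bn}F_{\bm,\bn}(w,z)$ with $F_{\bm,\bn}\in\cP_\bm(\fp^+)\otimes\cP_\bn(\fp^+)$, we have
\[ \bigl\langle F(w,z),e^{(w|\overline{x})}e^{(z|\overline{x})}\bigr\rangle_{(\alpha,w)\otimes(\beta,z)}=\sum_{\bm,\bn}\frac{1}{(\alpha)_\bm(\beta)_\bn}F_{\bm,\bn}(x,x). \]
We apply this with $F(w,z)=f(w+z)$. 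The map $T\colon f\mapsto\sum_{\bm,\bn}\frac{1}{(\alpha)_\bm(\beta)_\bn}(f(w+z))_{\bm,\bn}|_{w=z=x}$ is $K^\BC$-equivariant, because the diagonal map and the isotypic projections are equivariant. It also preserves homogeneous degree, so it maps $\cP_\bl(\fp^+)$ to $\cP(\fp^+)$. By multiplicity-freeness of (\ref{formula_HKS}) and Schur's lemma, $T|_{\cP_\bl(\fp^+)}=c_{\alpha,\beta}(\bl)\cdot\mathrm{id}$. It therefore suffices to compute $c_{\alpha,\beta}(\bl)$ for one convenient $f$; we take $f=\Delta_\bl$ and evaluate at $x=e$.

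Next we make the scalar explicit. We expand $\Delta_\bl(w+z)$ using the generalized binomial formula, obtained by decomposing $\Delta_\bl(w+z)$ under $K^\BC\times K^\BC$ and then restricting to $A_LN_L^-$-equivariant components. Only pairs $(\bm,\bn)$ with $\cP_\bl\subset\cP_\bm\otimes\cP_\bn$ contribute, and the constant becomes a finite sum
\[ c_{\alpha,\beta}(\bl)=\sum_{\bm,\bn}\frac{b_{\bl;\bm,\bn}}{(\alpha)_\bm(\beta)_\bn},\qquad \sum_{\bm,\bn}b_{\bl;\bm,\bn}=2^{|\bl|}. \]
Evaluating this sum directly looks hopeless for general $\bl$. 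We therefore prefer to identify $c_{\alpha,\beta}(\bl)$ representation-theoretically. For $\alpha,\beta$ large, we write $\frac{1}{(\alpha)_\bm}=\frac{\Gamma_r(\alpha)}{\Gamma_r(\alpha+\bm)}$. Using (\ref{formula_Gamma}), or equivalently the realization of $\langle\cdot,e^{(\cdot|\overline{x})}\rangle_\alpha$ as an integral over $D$ with weight $h(z,\overline{z})^{\alpha-\frac{2n}{r}}$, we rewrite $c_{\alpha,\beta}(\bl)$ as an integral over $D\times D$ of $\Delta_\bl(w+z)$ against $h(w,\overline{w})^{\alpha-\frac{2n}{r}}h(z,\overline{z})^{\beta-\frac{2n}{r}}$ and the exponential kernel. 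The change of variables $(w,z)\mapsto(w+z,w-z)$ then separates diagonal and anti-diagonal directions. This is where the tensor product decomposition of Kobayashi, $\cH_\alpha\hotimes\cH_\beta\simeq\hsum_{\bk}\cH_{\alpha+\beta}(D,V_{\bk\gamma}^\vee)$, enters. The restriction $F\mapsto F(x,x)$ to the diagonal is the symmetry breaking operator onto the $\bk=0$ summand $\cH_{\alpha+\beta}(D)$. In this language, $c_{\alpha,\beta}(\bl)$ compares the norm of $f(w+z)$ in $\cH_\alpha\hotimes\cH_\beta$ with its norm in $\cH_{\alpha+\beta}$, corrected by the components along the summands with $\bk\ne0$. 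We then express $c_{\alpha,\beta}(\bl)$ via a Parseval--Plancherel formula in which only the $\cP_\bl$-isotypic part of each summand is relevant.

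We expect the main obstacle to be the actual evaluation of the product
\[ \frac{1}{(\alpha)_\bl(\beta)_\bl}\frac{\prod_{i\le j}\bigl(\alpha+\beta-1-\frac{d}{2}(i+j-2)\bigr)_{l_i+l_j}}{\prod_{i<j\le r+1}\bigl(\alpha+\beta-1-\frac{d}{2}(i+j-3)\bigr)_{l_i+l_j}}. \]
As a first attempt, we would establish it in the rank-one case $\fp^+=\BC$, where it is the Vandermonde-type identity $\sum_{m}\binom{l}{m}\frac{1}{(\alpha)_m(\beta)_{l-m}}=\frac{(\alpha+\beta+l-1)_l}{(\alpha)_l(\beta)_l}$; this is exactly the displayed formula for $r=1$, since the numerator is $(\alpha+\beta-1)_{2l}$ and the denominator is $(\alpha+\beta-1)_l$. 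For general $r$, we would use both sides' rational dependence on $(\alpha,\beta)$ to compare poles. The poles in $\alpha$ come only from $(\alpha)_\bm$ with $\bm\le\bl$, and residues can be computed by letting $\alpha$ approach a reducibility point, where the relevant submodule structure is known. Together with the normalizations at $\bl=0$ and as $\alpha\to\infty$, where $c\to1/(\beta)_\bl$, this should pin down the rational function.
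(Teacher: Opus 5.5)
Your first step is correct: the left-hand side equals $\sum_{\bm,\bn}F_{\bm,\bn}(x,x)/((\alpha)_\bm(\beta)_\bn)$, and Schur's lemma makes this $c_{\alpha,\beta}(\bl)f(x)$. The paper records exactly this as (\ref{formula_inner_eq}); it does not prove the lemma itself but quotes it from [N3, Theorem 8.4]. Your rank-one Vandermonde check is also correct. However, the content of the lemma is the closed form of $c_{\alpha,\beta}(\bl)$ for $r\ge 2$, and your proposal never derives it. The last two paragraphs list strategies and end with ``this should pin down the rational function''. To see what must be proved, pair the denominator factor indexed by $(i,j)$ with the numerator factor indexed by $(i,j-1)$ and use $(a)_{p+q}=(a)_p(a+p)_q$. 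The claim becomes
\[ (\alpha)_\bl(\beta)_\bl\,c_{\alpha,\beta}(\bl)=\prod_{1\le i<j\le r+1}\Bigl(\alpha+\beta-1-\frac{d}{2}(i+j-3)+l_i+l_j\Bigr)_{l_{j-1}-l_j}. \]
So the degree-$|\bl|$ polynomial $(\alpha)_\bl(\beta)_\bl\sum_{\bm,\bn}b_{\bl;\bm,\bn}/((\alpha)_\bm(\beta)_\bn)$ must depend on $\alpha+\beta$ alone and have these specific zeros. Nothing in your plan gives a reason for either fact.

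Neither sketched route closes this gap.

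\textbf{The Plancherel paragraph.} (\ref{formula_Gamma}) puts $\Gamma_r(\alpha+\bm)$ in the numerator, so it does not represent $1/(\alpha)_\bm$. The Bergman-integral realization is just the definition of the left-hand side. The substitution $(w,z)\mapsto(w+z,w-z)$ does not factor $h(w,\overline{w})^{\alpha-\frac{2n}{r}}h(z,\overline{z})^{\beta-\frac{2n}{r}}$, so nothing separates. What is true is that $c_{\alpha,\beta}(\bl)$ is the ratio of $\Vert f(w+z)\Vert^2_{\cH_\alpha\hotimes\cH_\beta}$ to the squared Fischer norm of $f$. Expanding this over Kobayashi's decomposition, though, needs three things for every $\bk\ne 0$, and you supply none of them:
\begin{itemize}
\item the explicit symmetry breaking operator onto $\cH_{\alpha+\beta}(D,V_{\bk\gamma}^\vee)$;
\item its operator norm, which the paper quotes from [N3, Corollary 8.7], so relying on it here risks circularity;
\item the norms of the $\cP_\bl$-isotypic $K$-types inside these vector-valued spaces.
\end{itemize}

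\textbf{The pole-comparison plan.} It does not compute a single principal part. At a reducibility point $\alpha_0$, the residue of $\langle\cdot,\cdot\rangle_\alpha$ is an invariant form on a submodule of $\cO_{\alpha_0}(D)$. Evaluating the corresponding residue of $c_{\alpha,\beta}(\bl)$ is then a tensor-norm problem of the same kind on that submodule, so it is no easier. Moreover, for even $d$ the zeros $\frac{d}{2}(j-1)-k$ of different factors of $(\alpha)_\bm$ coincide; for instance, $(\alpha)_{(2,2)}$ has a double zero at $\alpha=0$ when $d=2$. Higher-order poles therefore occur, and residues alone would not determine the function. The normalizations at $\bl=0$ and $\alpha\to\infty$ fix only the leading coefficient.

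As it stands, the identity for general rank remains unproved.
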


Especially, for $f(z)\in \cP_\bl(\fp^+)$, we consider the orthogonal decomposition of $f(w+z)\in\cP(\fp^+\oplus\fp^+)$ as 
\[ f(w+z)=\sum_{\bm,\bn\in\BZ_{++}^r}f_{\bm,\bn}(w,z)\in\bigoplus_{\bm,\bn\in\BZ_{++}^r}\cP_{\bm}(\fp^+)\otimes\cP_{\bn}(\fp^+). \]
Then we have 
\begin{align}
&\bigl\langle f(w+z),e^{(w+z|\overline{x})}\bigr\rangle_{(\alpha,w)\otimes(\beta,z)}=\sum_{\bm,\bn}\frac{1}{(\alpha)_\bm(\beta)_\bn}f_{\bm,\bn}(x,x) \notag\\
&=\frac{1}{(\alpha)_\bl(\beta)_\bl}\frac{\prod_{1\le i\le j\le r}\bigl(\alpha+\beta-1-\frac{d}{2}(i+j-2)\bigr)_{l_i+l_j}}{\prod_{1\le i<j\le r+1}\bigl(\alpha+\beta-1-\frac{d}{2}(i+j-3)\bigr)_{l_i+l_j}}f(x). \label{formula_inner_eq}
\end{align}

\begin{proof}[Proof of Theorem \ref{thm_minKtype}]
By Lemma \ref{lem_const_func}, $(\cF^{\lambda,\mu}_{\bk\uparrow}v)(x,0)$ is holomorphically continued to $x\in\fp^+$, and for $(x,y)\in (D\times D)^\times$, we have 
\[ (\cF^{\lambda,\mu}_{\bk\uparrow}v)(x,y)=(\cF^{\lambda,\mu}_{\bk\uparrow}v)(x-y,0). \]
Since this is holomorphic, it is enough to consider the case $x\in\Omega$ and $y=0$. 
Next, for fixed $\rK_\bk(x)\in\bigl(\cP_\bk(\fp^+)\otimes\Hom_\BC(V_{\bk\gamma}^\vee,\BC)\bigr)^K$, let $v_0\in V_{\bk\gamma}^\vee$ be the lowest weight vector satisfying 
\[ \rK_\bk(x)v_0=\Delta_\bk(x). \]
Then since $\cF^{\lambda,\mu}_{\bk\uparrow}|_{V_{\bk\gamma}^\vee}$ intertwines the $\widetilde{K}^\BC$-action, we may assume $v=v_0$, so that 
\begin{align*}
&(\cF^{\lambda,\mu}_{\bk\uparrow} v_0)(x,0) \\
&=\det(x)^{-\lambda-\mu+\frac{n}{r}}\int_{\Omega\cap(x-\Omega)} 
\det(w)^{\lambda-\frac{n}{r}}\det(x-w)^{\mu-\frac{n}{r}} 
\Delta_\bk\bigl((w^{-1}+(x-w)^{-1})^{-1}\bigr)\,dw . 
\end{align*}
Let $x\in\Omega$. Since $A_LN_L^-$ acts simply transitively on $\Omega$, we can take $b\in A_LN_L^-$ such that $b.e=x$ holds. 
Then since $\cF^{\lambda,\mu}_{\bk\uparrow}|_{V_{\bk\gamma}^\vee}$ intertwines the action of $A_LN_L^-\subset L$, we have 
\begin{align}
(\cF^{\lambda,\mu}_{\bk\uparrow}v_0)(x,0)&=(\cF^{\lambda,\mu}_{\bk\uparrow}v_0)(b.e,0)
=\chi(b)^{-\lambda-\mu}(\cF^{\lambda,\mu}_{\bk\uparrow}\circ (\chi^{-\lambda-\mu}\otimes \rho_{\bk\gamma}^\vee) (b^{-1})v_0)(e,0) \notag\\
&=(\cF^{\lambda,\mu}_{\bk\uparrow}v_0)(e,0)\Delta_\bk(b.e)
=(\cF^{\lambda,\mu}_{\bk\uparrow}v_0)(e,0)\Delta_\bk(x). \label{formula_AN_equiv}
\end{align}
Hence it is enough to compute 
\begin{align*}
(\cF^{\lambda,\mu}_{\bk\uparrow} v_0)(e,0)
=\int_{\Omega\cap(e-\Omega)} \det(w)^{\lambda-\frac{n}{r}}\det(e-w)^{\mu-\frac{n}{r}}\Delta_\bk\bigl((w^{-1}+(e-w)^{-1})^{-1}\bigr)\,dw. 
\end{align*}
We fix an arbitrary $k_0\in\BZ_{\ge 0}$ such that $k_0\ge k_1$. Then by (\ref{formula_Gamma}), (\ref{formula_AN_equiv}), we have 
\begin{align*}
&\Gamma_r(\lambda+\mu+k_0+\bk)(\cF^{\lambda,\mu}_{\bk\uparrow} v_0)(e,0) \\
&=\int_{x\in\Omega}e^{-\tr(x)}\Delta_\bk(x)\det(x)^{\lambda+\mu+k_0-\frac{n}{r}}(\cF^{\lambda,\mu}_{\bk\uparrow} v_0)(e,0)\,dx \\
&=\int_{x\in\Omega}e^{-\tr(x)}\det(x)^{\lambda+\mu+k_0-\frac{n}{r}}(\cF^{\lambda,\mu}_{\bk\uparrow} v_0)(x,0)\,dx \\
&=\int_{x\in\Omega}e^{-\tr(x)}\det(x)^{k_0}
\int_{w\in\Omega\cap(x-\Omega)}\det(w)^{\lambda-\frac{n}{r}}\det(x-w)^{\mu-\frac{n}{r}} \\* 
&\hspace{240pt}\times\Delta_\bk\bigl((w^{-1}+(x-w)^{-1})^{-1}\bigr)\,dwdx \\
&=\iint_{(w,z)\in\Omega\times\Omega}e^{-\tr(w+z)}\det(w+z)^{k_0}\det(w)^{\lambda-\frac{n}{r}}\det(z)^{\mu-\frac{n}{r}}\Delta_\bk\bigl((w^{-1}+z^{-1})^{-1}\bigr)\,dwdz \\
&=\iint_{(w,z)\in\Omega\times\Omega}e^{-\tr(w+z)}\det(w^{-1}+z^{-1})^{k_0} \\*
&\hspace{107pt}\times\det(w)^{\lambda+k_0-\frac{n}{r}}\det(z)^{\mu+k_0-\frac{n}{r}}\Delta_\bk\bigl((w^{-1}+z^{-1})^{-1}\bigr)\,dwdz \\
&=\iint_{(w,z)\in\Omega\times\Omega}e^{-\tr(w+z)}\det(w)^{\lambda+k_0-\frac{n}{r}}\det(z)^{\mu+k_0-\frac{n}{r}}\check{\Delta}_{\underline{k_0}-\bk^\vee}(w^{-1}+z^{-1})\,dwdz, 
\end{align*}
where we have put $x=w+z$ at the 4th equality, used (\ref{formula_det_inv2}) at the 5th equality, and used (\ref{formula_Delta_inv}) at the 6th equality. 
For $\bm,\bn\in\BZ_{++}^r$, let $f_{\bm,\bn}(w,z)$ be the orthogonal projection of $\check{\Delta}_{\underline{k_0}-\bk^\vee}(w+z)$ 
onto $\cP_{\bm}(\fp^+)\otimes\cP_{\bn}(\fp^+)$, namely, 
\[ \check{\Delta}_{\underline{k_0}-\bk^\vee}(w+z)=\sum_{\bm,\bn\in\BZ_{++}^r}f_{\bm,\bn}(w,z)\in\bigoplus_{\bm,\bn\in\BZ_{++}^r}\cP_{\bm}(\fp^+)\otimes\cP_{\bn}(\fp^+). \]
Then $f_{\bm,\bn}\ne 0$ holds only if $|\bm|+|\bn|=|\underline{k_0}-\bk^\vee|$, and by (\ref{formula_Gamma}), we have 
\begin{align*}
&\Gamma_r(\lambda+\mu+k_0+\bk)(\cF^{\lambda,\mu}_{\bk\uparrow} v_0)(e,0) \\
&=\sum_{\bm,\bn}\iint_{(w,z)\in\Omega\times\Omega}e^{-\tr(w)}e^{-\tr(z)}\det(w)^{\lambda+k_0-\frac{n}{r}}\det(z)^{\mu+k_0-\frac{n}{r}}f_{\bm,\bn}(w^{-1},z^{-1})\,dwdz \\
&=\sum_{\bm,\bn}\Gamma_r(\lambda+k_0-\bm^\vee)\Gamma_r(\mu+k_0-\bn^\vee)f_{\bm,\bn}(e,e) \\
&=\sum_{\bm,\bn}\frac{\Gamma_r(\lambda+k_0)\Gamma_r(\mu+k_0)}{(\lambda+k_0-\bm^\vee)_{\bm^\vee}(\mu+k_0-\bn^\vee)_{\bn^\vee}}f_{\bm,\bn}(e,e) \\
&=(-1)^{|\underline{k_0}-\bk^\vee|}\Gamma_r(\lambda+k_0)\Gamma_r(\mu+k_0)
\sum_{\bm,\bn}\frac{f_{\bm,\bn}(e,e)}{\bigl(-\lambda-k_0+\frac{n}{r}\bigr)_\bm\bigl(-\mu-k_0+\frac{n}{r}\bigr)_\bn}. 
\end{align*}
Then applying (\ref{formula_inner_eq}) for $\bl=\underline{k_0}-\bk^\vee$, $\alpha=-\lambda-k_0+\frac{n}{r}$, $\beta=-\mu-k_0+\frac{n}{r}$, we get 
\begin{align*}
&(\cF^{\lambda,\mu}_{\bk\uparrow} v_0)(e,0) \\
&=(-1)^{|\underline{k_0}-\bk^\vee|}\frac{\Gamma_r(\lambda+k_0)\Gamma_r(\mu+k_0)}{\Gamma_r(\lambda+\mu+k_0+\bk)}
\frac{\check{\Delta}_{\underline{k_0}-\bk^\vee}(e)}{\bigl(-\lambda-k_0+\frac{n}{r}\bigr)_{\underline{k_0}-\bk^\vee}\bigl(-\mu-k_0+\frac{n}{r}\bigr)_{\underline{k_0}-\bk^\vee}} \\*
&\eqspace{}\times\frac{\prod_{1\le i\le j\le r}\bigl(-\lambda-\mu-2k_0+d(r-1)+1-\frac{d}{2}(i+j-2)\bigr)_{2k_0-k_{r-i+1}-k_{r-j+1}}}
{\prod_{1\le i<j\le r+1}\bigl(-\lambda-\mu-2k_0+d(r-1)+1-\frac{d}{2}(i+j-3)\bigr)_{2k_0-k_{r-i+1}-k_{r-j+1}}} \\
&=\frac{\Gamma_r(\lambda+k_0)\Gamma_r(\mu+k_0)}{\Gamma_r(\lambda+\mu+k_0+\bk)}
\frac{1}{(\lambda+\bk)_{\underline{k_0}-\bk}(\mu+\bk)_{\underline{k_0}-\bk}} \\*
&\eqspace{}\times\frac{\prod_{1\le i\le j\le r}\bigl(\lambda+\mu+k_i+k_j-\frac{d}{2}(i+j-2)\bigr)_{2k_0-k_i-k_j}}
{\prod_{0\le i<j\le r}\bigl(\lambda+\mu+k_i+k_j-\frac{d}{2}(i+j-1)\bigr)_{2k_0-k_i-k_j}}  \\
&=\frac{\Gamma_r(\lambda+\bk)\Gamma_r(\mu+\bk)}{\Gamma_r(\lambda+\mu+k_0+\bk)}
\prod_{1\le i\le j\le r}\frac{\bigl(\lambda+\mu-\frac{d}{2}(i+j-2)\bigr)_{2k_0}}{\bigl(\lambda+\mu-\frac{d}{2}(i+j-2)\bigr)_{k_i+k_j}} \\*
&\eqspace{}\times\prod_{0\le i<j\le r}\frac{\bigl(\lambda+\mu-\frac{d}{2}(i+j-1)\bigr)_{k_i+k_j}}{\bigl(\lambda+\mu-\frac{d}{2}(i+j-1)\bigr)_{2k_0}} \\
&=\frac{\Gamma_r(\lambda+\bk)\Gamma_r(\mu+\bk)}{\Gamma_r(\lambda+\mu)(\lambda+\mu)_{\underline{k_0}+\bk}}
\frac{\prod_{1\le i\le j\le r}\bigl(\lambda+\mu-\frac{d}{2}(i+j-2)\bigr)_{2k_0}}{\prod_{1\le i\le j\le r}\bigl(\lambda+\mu-\frac{d}{2}(i+j-2)\bigr)_{k_i+k_j}} \\*
&\eqspace{}\times\frac{\prod_{j=1}^r\bigl(\lambda+\mu-\frac{d}{2}(j-1)\bigr)_{k_0+k_j}\prod_{1\le i<j\le r}\bigl(\lambda+\mu-\frac{d}{2}(i+j-1)\bigr)_{k_i+k_j}}{\prod_{1\le i\le j\le r}\bigl(\lambda+\mu-\frac{d}{2}(i+j-2)\bigr)_{2k_0}} \\
&=\frac{\Gamma_r(\lambda+\bk)\Gamma_r(\mu+\bk)}{\Gamma_r(\lambda+\mu)}
\frac{\prod_{1\le i<j\le r}\bigl(\lambda+\mu-\frac{d}{2}(i+j-1)\bigr)_{k_i+k_j}}{\prod_{1\le i\le j\le r}\bigl(\lambda+\mu-\frac{d}{2}(i+j-2)\bigr)_{k_i+k_j}}
=B_r(\lambda,\mu,\bk), 
\end{align*}
and this completes the proof of Theorem \ref{thm_minKtype}. 
\end{proof}

\section{Differential expression of holographic operators when $\sigma,\tau,\allowbreak \rho$ are one-dimensional}\label{section_diff}

In this section, we assume that $\sigma=\chi^{-\lambda}$, $\tau=\chi^{-\mu}$, $\rho=\chi^{-(\lambda+\mu+2l)}$ are one-dimensional. Then the holographic operator is given by 
\begin{gather*}
\tilde{\cF}^{\lambda,\mu}_{l\uparrow}\colon\cH_{\lambda+\mu+2l}(D)\longrightarrow \cH_\lambda(D)\hotimes\cH_\mu(D), \\
\begin{split}
&(\tilde{\cF}^{\lambda,\mu}_{l\uparrow}f)(x,y):=\bigl\langle f,\overline{\hat{\rK}^{\lambda,\mu}_l(x,y;\overline{\cdot})}\bigr\rangle_{\lambda+\mu+2l} \\
&=\frac{\det(x-y)^{-\lambda-\mu-l+\frac{n}{r}}}{B_r(\lambda,\mu,(l,\ldots,l))} \int_{C(x,y)} \det(w-y)^{\lambda+l-\frac{n}{r}}\det(x-w)^{\mu+l-\frac{n}{r}}f(w)\,dw 
\end{split}
\end{gather*}
where 
\begin{gather*}
\hat{\rK}^{\lambda,\mu}_l(x,y;\overline{w}):=h(x,\overline{w})^{-\lambda}h(y,\overline{w})^{-\mu}\det(x^{\overline{w}}-y^{\overline{w}})^l, \\
B_r(\lambda,\mu,(l,\ldots,l)):=\frac{\Gamma_r(\lambda+l)\Gamma_r(\mu+l)}{\Gamma_r(\lambda+\mu+2l)},
\end{gather*}
with $\Gamma_r(\lambda)$ given in (\ref{Gamma}). 
We note that the 1st expression is valid without the tube type assumption, but the 2nd expression is valid only for tube type cases. 
In this section, we find another expression of the holographic operator as an infinite-order differential operator. 

To describe the differential expression, for $\bm\in\BZ_{++}^r$, 
let $\tilde{\Phi}_\bm(x,y)\in\cP_\bm(\fp^+)\otimes\cP_\bm(\fp^-)$ be the orthogonal projection of $e^{(x|y)}$ onto $\cP_\bm(\fp^+)\otimes\cP_\bm(\fp^-)$, so that 
\[ e^{(x|y)}=\sum_{\bm\in\BZ_{++}^r}\tilde{\Phi}_\bm(x,y). \]
This is defined without the tube type assumption. Especially, if $\fp^+$ is of tube type, then we have 
\[ \tilde{\Phi}_\bm(x,y)=\frac{\dim\cP_\bm(\fp^+)}{\bigl(\frac{n}{r}\bigr)_\bm}\Phi_\bm(P(y^{1/2})x)
=\frac{\dim\cP_\bm(\fp^+)}{\bigl(\frac{n}{r}\bigr)_\bm}\Phi_\bm(P(x^{1/2})y), \]
where $\Phi_\bm(x)\in\cP_\bm(\fp^+)$ is the unique $K_L$-invariant polynomial in $\cP_\bm(\fp^+)$ satisfying $\Phi_\bm(e)=1$, 
and $(\alpha)_\bm$ is as in (\ref{Pochhammer}) (see \cite[Proposition XII.1.3\,(i)]{FK}). 
Using this, for $\alpha,\beta\in\BC$, let 
\[ {}_1F_1\biggl(\begin{matrix}\alpha\\\beta\end{matrix}\,;\,x,y\biggr)
:=\sum_{\bm\in\BZ_{++}^r}\frac{(\alpha)_\bm}{(\beta)_\bm}\tilde{\Phi}_\bm(x,y). \]
Then $\tilde{\cF}^{\lambda,\mu}_{l\uparrow}$ is expressed as an infinite-order differential operator as follows. 

\begin{theorem}\label{thm_1F1}
For $f(w)\in\cO_{\lambda+\mu+2l}(D)_{\widetilde{K}}=\cP(\fp^+)$, we have 
\begin{align*}
(\tilde{\cF}^{\lambda,\mu}_{l\uparrow}f)(x,y)
&=\det(x-y)^l {}_1F_1\biggl(\begin{matrix}\lambda+l\\ \lambda+\mu+2l\end{matrix}\,;\, x-y, \frac{\partial}{\partial w}\biggr)f(w)\biggr|_{w=y} \\
&=\det(x-y)^l {}_1F_1\biggl(\begin{matrix}\mu+l\\ \lambda+\mu+2l\end{matrix}\,;\, y-x, \frac{\partial}{\partial w}\biggr)f(w)\biggr|_{w=x}. 
\end{align*}
\end{theorem}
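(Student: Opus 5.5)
The plan is to apply Theorem \ref{thm_diff_expr}\,(1) to $\cF^{\sigma,\tau}_{\rho\uparrow}=\tilde{\cF}^{\lambda,\mu}_{l\uparrow}$. This reduces the claim to computing the symbol
\[ F_{\rL}(x;\zeta)=(\tilde{\cF}^{\lambda,\mu}_{l\uparrow}e^{(w|\zeta)})_w(x,0), \]
and showing that $F_{\rL}(x;\zeta)=\det(x)^l\,{}_1F_1\bigl(\begin{smallmatrix}\lambda+l\\ \lambda+\mu+2l\end{smallmatrix};x,\zeta\bigr)$. Both sides are holomorphic in $x$, so it suffices to check this for $x\in D\cap\Omega$. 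For such $x$, Lemma \ref{lem_ori_contour}\,(1) gives $C(x,0)=\Omega\cap(x-\Omega)$. Using the integral expression of $\tilde{\cF}^{\lambda,\mu}_{l\uparrow}$ (Corollary \ref{cor_scalar2} with the normalization $B_r(\lambda,\mu,(l,\ldots,l))$ from Theorem \ref{thm_minKtype}), we get
\[ F_{\rL}(x;\zeta)=\frac{\det(x)^{-\lambda-\mu-l+\frac{n}{r}}}{B_r(\lambda,\mu,(l,\ldots,l))}\int_{\Omega\cap(x-\Omega)}\det(w)^{\lambda+l-\frac{n}{r}}\det(x-w)^{\mu+l-\frac{n}{r}}e^{(w|\zeta)}\,dw. \]

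Next substitute $w=P(x^{1/2})z$, which maps $\Omega\cap(e-\Omega)$ onto $\Omega\cap(x-\Omega)$ and has Jacobian $\det(x)^{n/r}$. The powers of $\det(x)$ collapse to $\det(x)^l$, and the integral becomes
\[ \frac{\det(x)^l}{B_r}\int_{\Omega\cap(e-\Omega)}\det(z)^{\lambda+l-\frac{n}{r}}\det(e-z)^{\mu+l-\frac{n}{r}}e^{(z|P(x^{1/2})\zeta)}\,dz. \]
Expand $e^{(z|\eta)}=\sum_{\bm}\tilde{\Phi}_\bm(z,\eta)$ with $\eta=P(x^{1/2})\zeta$. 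The domain is bounded and the series converges uniformly there, so termwise integration is justified.

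The key step is a multivariate beta integral. The weight $\det(z)^{\alpha-\frac{n}{r}}\det(e-z)^{\beta-\frac{n}{r}}$ is $K_L$-invariant, and a $K_L$-invariant functional on the irreducible space $\cP_\bm(\fp^+)$ is a multiple of evaluation at $e$, since the spherical vector is unique. The multiple can be computed on $\Delta_\bm$ by the generalized beta formula
\[ \int_{\Omega\cap(e-\Omega)}\Delta_\bm(z)\det(z)^{\alpha-\frac{n}{r}}\det(e-z)^{\beta-\frac{n}{r}}\,dz=\frac{\Gamma_r(\alpha+\bm)\Gamma_r(\beta)}{\Gamma_r(\alpha+\beta+\bm)}. \]
This follows from (\ref{formula_Gamma}) by the same Laplace-transform trick used in the proof of Theorem \ref{thm_minKtype}: multiply by $e^{-\tr}$, integrate over $\Omega$, and substitute $x=w+z$.

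Now take $\alpha=\lambda+l$, $\beta=\mu+l$. Dividing by $B_r$ gives the coefficient $(\lambda+l)_\bm/(\lambda+\mu+2l)_\bm$. It remains to show $\tilde{\Phi}_\bm(e,P(x^{1/2})\zeta)=\tilde{\Phi}_\bm(x,\zeta)$, which follows from the displayed formula $\tilde{\Phi}_\bm(x,y)=\frac{\dim\cP_\bm}{(n/r)_\bm}\Phi_\bm(P(x^{1/2})y)$. This yields the claimed $F_{\rL}$, and Theorem \ref{thm_diff_expr} then gives the first equality.

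For the second equality, I will run the same computation with $F_{\rR}(x;\zeta)=(\tilde{\cF}e^{(w|\zeta)})(0,x)$. Alternatively, Lemma \ref{lem_symmetric} swaps the roles of $(\lambda,x)$ and $(\mu,y)$; the sign $(-1)^{rl}$ it produces is absorbed by $\det(y-x)^l=(-1)^{rl}\det(x-y)^l$. The main obstacle I expect is making the beta integral rigorous for the relevant parameters ($\Re\lambda,\Re\mu>-l+\frac{n}{r}-1$). I will first prove it for large real parts and then extend by analytic continuation in $\lambda$ and $\mu$.
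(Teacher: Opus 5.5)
Your proposal is correct and is essentially the paper's second proof. You reduce via Theorem \ref{thm_diff_expr} to the symbol at $x\in\Omega\cap D$, substitute $w=P(x^{1/2})z$, and integrate $\sum_{\bm}\tilde{\Phi}_\bm$ termwise against the beta density (your $K_L$-invariance plus Laplace-transform derivation just unpacks the Faraut--Kor\'anyi beta integral the paper cites), then use $\tilde{\Phi}_\bm(e,P(x^{1/2})\zeta)=\tilde{\Phi}_\bm(x,\zeta)$. Deriving the second equality from Lemma \ref{lem_symmetric}, rather than recomputing $F_{\rR}$ via Lemma \ref{lem_ori_contour}\,(2), is a harmless shortcut; and no analytic continuation is needed, since $\Re(\lambda+l),\Re(\mu+l)>\frac{n}{r}-1$ already gives absolute convergence for every $\bm\in\BZ_{++}^r$.
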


Indeed, by Theorem \ref{thm_diff_expr}, for $x,y\in D\subset\fp^+$, $\zeta\in\fp^-$, by putting 
\[ F^{\lambda,\mu}_{l\uparrow,\rL}(x;\zeta):=(\tilde{\cF}^{\lambda,\mu}_{l\uparrow}e^{(w|\zeta)})_w(x,0), \qquad 
F^{\lambda,\mu}_{l\uparrow,\rR}(y;\zeta):=(\tilde{\cF}^{\lambda,\mu}_{l\uparrow}e^{(w|\zeta)})_w(0,y), \]
we have 
\[ (\tilde{\cF}^{\lambda,\mu}_{l\uparrow}f)(x,y)
=F^{\lambda,\mu}_{l\uparrow,\rL}\biggl(x-y;\frac{\partial}{\partial w}\biggr)f(w) \biggr|_{w=y} 
=F^{\lambda,\mu}_{l\uparrow,\rR}\biggl(y-x;\frac{\partial}{\partial w}\biggr)f(w) \biggr|_{w=x}. \]
Hence it suffices to compute $F^{\lambda,\mu}_{l\uparrow,\rL}(x;\zeta)$, $F^{\lambda,\mu}_{l\uparrow,\rR}(y;\zeta)$. 

In the following, we give two different proofs of Theorem \ref{thm_1F1}. 
One proof is based on the integral expression on $D$, which is valid without the tube type assumption. 
By \cite[Part V, (J6.1)]{FKKLR}, we have 
\begin{align*}
F^{\lambda,\mu}_{l\uparrow,\rL}(x;\zeta)&=\bigl\langle e^{(w|\zeta)}, \overline{\hat{\rK}^{\lambda,\mu}_l(x,0;\overline{w})}\bigr\rangle_{\lambda+\mu+2l,w}
=\bigl\langle e^{(w|\zeta)}, \overline{h(x,\overline{w})^{-\lambda} \det(x^{\overline{w}})^l}\bigr\rangle_{\lambda+\mu+2l,w} \\
&=\bigl\langle e^{(w|\zeta)}, \overline{h(x,\overline{w})^{-\lambda-l} \det(x)^l} \bigr\rangle_{\lambda+\mu+2l,w} \\
&=\det(x)^l\bigl\langle e^{(w|\zeta)}, \overline{h(x,\overline{w})^{-\lambda-l}} \bigr\rangle_{\lambda+\mu+2l,w}, \\
F^{\lambda,\mu}_{l\uparrow,\rR}(y;\zeta)&=\bigl\langle e^{(w|\zeta)}, \overline{\hat{\rK}^{\lambda,\mu}_l(0,y;\overline{w})}\bigr\rangle_{\lambda+\mu+2l,w}
=\bigl\langle e^{(w|\zeta)}, \overline{h(y,\overline{w})^{-\mu} \det(-y^{\overline{w}})^l} \bigr\rangle_{\lambda+\mu+2l,w} \\
&=\bigl\langle e^{(w|\zeta)}, \overline{h(y,\overline{w})^{-\mu-l} \det(-y)^l} \bigr\rangle_{\lambda+\mu+2l,w} \\
&=\det(-y)^l\bigl\langle e^{(w|\zeta)}, \overline{h(y,\overline{w})^{-\mu-l}} \bigr\rangle_{\lambda+\mu+2l,w}. 
\end{align*}
Then by combining 
\[ h(x,\overline{w})^{-\lambda}=\sum_{\bm\in\BZ_{++}^r} (\lambda)_\bm\tilde{\Phi}_\bm(x,\overline{w}) \]
(see \cite{FK0}, \cite[Proposition XII.1.3\,(ii), Theorem XIII.2.4]{FK}) with Lemma \ref{lem_FK}, we get 
\begin{align*}
F^{\lambda,\mu}_{l\uparrow,\rL}(x;\zeta)&=\det(x)^l\bigl\langle e^{(w|\zeta)}, \overline{h(x,\overline{w})^{-\lambda-l}}\bigr\rangle_{\lambda+\mu+2l,w} \\
&=\det(x)^l\sum_{\bm\in\BZ_{++}^r}(\lambda+l)_\bm \bigl\langle e^{(w|\zeta)}, \overline{\tilde{\Phi}_\bm(x,\overline{w})}\bigr\rangle_{\lambda+\mu+2l,w} \\
&=\det(x)^l\sum_{\bm\in\BZ_{++}^r}\frac{(\lambda+l)_\bm}{(\lambda+\mu+2l)_\bm} \tilde{\Phi}_\bm(x,\zeta)
=\det(x)^l {}_1F_1\biggl(\begin{matrix}\lambda+l\\ \lambda+\mu+2l\end{matrix}\,;\,x,\zeta\biggr). 
\end{align*}
Similarly, we have $F^{\lambda,\mu}_{l\uparrow,\rR}(y;\zeta)=\det(-y)^l {}_1F_1\bigl(\begin{smallmatrix}\mu+l\\ \lambda+\mu+2l\end{smallmatrix};y,\zeta\bigr)$. 

Another proof of Theorem \ref{thm_1F1} is based on the integral expression on the totally real submanifold $C(x,y)$, which is valid only for tube type cases. We have 
\begin{align*}
F^{\lambda,\mu}_{l\uparrow,\rL}(x;\zeta)
&=\frac{\det(x)^{-\lambda-\mu-l+\frac{n}{r}}}{B_r(\lambda,\mu,(l,\ldots,l))} \int_{C(x,0)} \det(w)^{\lambda+l-\frac{n}{r}}\det(x-w)^{\mu+l-\frac{n}{r}}e^{(w|\zeta)}\,dw, \\
F^{\lambda,\mu}_{l\uparrow,\rR}(y;\zeta)&=\frac{\det(-y)^{-\lambda-\mu-l+\frac{n}{r}}}{B_r(\lambda,\mu,(l,\ldots,l))} \int_{C(0,y)} \det(w-y)^{\lambda+l-\frac{n}{r}}\det(-w)^{\mu+l-\frac{n}{r}}e^{(w|\zeta)}\,dw \\
&=\frac{\det(-e)^{l-\frac{n}{r}}\det(y)^{-\lambda-\mu-l+\frac{n}{r}}}{B_r(\lambda,\mu,(l,\ldots,l))} \int_{C(0,y)} \det(y-w)^{\lambda+l-\frac{n}{r}}\det(w)^{\mu+l-\frac{n}{r}}e^{(w|\zeta)}\,dw \\
&=\frac{(-1)^{lr}\det(y)^{-\lambda-\mu-l+\frac{n}{r}}}{B_r(\lambda,\mu,(l,\ldots,l))} \int_{C(y,0)} \det(y-w)^{\lambda+l-\frac{n}{r}}\det(w)^{\mu+l-\frac{n}{r}}e^{(w|\zeta)}\,dw, 
\end{align*}
where we have used Lemma \ref{lem_ori_contour}\,(2) at the last equality. 
Since $F^{\lambda,\mu}_{l\uparrow,\rL}(x;\zeta)$ is holomorphic, we may assume $x\in\Omega\cap D$. Then as in the proof of \cite[Proposition XV.1.4]{FK}, we have 
\begin{align*}
&F^{\lambda,\mu}_{l\uparrow,\rL}(x;\zeta)
=\frac{\det(x)^{-\lambda-\mu-l+\frac{n}{r}}}{B_r(\lambda,\mu,(l,\ldots,l))} \int_{\Omega\cap(x-\Omega)} \det(w)^{\lambda+l-\frac{n}{r}}\det(x-w)^{\mu+l-\frac{n}{r}}e^{(w|\zeta)}\,dw \\
&=\frac{\det(x)^{-\lambda-\mu-l+\frac{n}{r}}}{B_r(\lambda,\mu,(l,\ldots,l))} \int_{\Omega\cap(e-\Omega)} \det(P(x^{1/2})w)^{\lambda+l-\frac{n}{r}} \det(P(x^{1/2})(e-w))^{\mu+l-\frac{n}{r}}e^{(P(x)^{1/2}w|\zeta)} \\*
&\hspace{355pt}\times\det(x)^{\frac{n}{r}}\,dw \\
&=\frac{\det(x)^l}{B_r(\lambda,\mu,(l,\ldots,l))} \int_{\Omega\cap(e-\Omega)} \det(w)^{\lambda+l-\frac{n}{r}}\det(e-w)^{\mu+l-\frac{n}{r}}e^{(w|P(x)^{1/2}\zeta)}\,dw \\
&=\frac{\det(x)^l}{B_r(\lambda,\mu,(l,\ldots,l))}\sum_{\bm\in\BZ_{++}^r} \int_{\Omega\cap(e-\Omega)} \det(w)^{\lambda+l-\frac{n}{r}}\det(e-w)^{\mu+l-\frac{n}{r}} \tilde{\Phi}_\bm(w,P(x)^{1/2}\zeta)\,dw \\
&=\frac{\Gamma_r(\lambda+\mu+2l)\det(x)^l}{\Gamma_r(\lambda+l)\Gamma_r(\mu+l)} \sum_{\bm\in\BZ_{++}^r} \frac{\Gamma_r(\lambda+l+\bm)\Gamma_r(\mu+l)}{\Gamma_r(\lambda+\mu+2l+\bm)}\tilde{\Phi}_\bm(e,P(x)^{1/2}\zeta) \\
&=\det(x)^l\sum_{\bm\in\BZ_{++}^r} \frac{(\lambda+l)_\bm}{(\lambda+\mu+2l)_\bm} \tilde{\Phi}_\bm(x,\zeta)
=\det(x)^l {}_1F_1\biggl(\begin{matrix}\lambda+l\\ \lambda+\mu+2l\end{matrix}\,;\,x,\zeta\biggr), 
\end{align*}
where the 5th equality follows from \cite[Theorem VII.1.7]{FK}.  
Similar for $F^{\lambda,\mu}_{l\uparrow,\rR}(y;\zeta)$.

\end{document}